\def\cal{\mathcal}
\def\Bbb{\mathbb}
\def\N{{\Bbb N}}
\def\Z{{\Bbb Z}}
\def\R{{\Bbb R}}
\def\cN{{\cal N}}
\def\F{{\cal F}}	% Fouriertransformation
\def\ext{{\cal E}} % extension operator\cal
\def\cS{{\cal S}}		%schwatzfnctns
\def\I{{\cal I}}
\def\J{{\cal J}}
\def\cR{{\cal R}}
 \def\om{{\omega}}
\def\de{{\delta}}
\def\la{{\lambda}}
\def \dist {\text{\rm dist\,}}
\def \supp {\text{\rm supp\,}}
\def\ve{\varepsilon}
\renewcommand{\epsilon}{\varepsilon}
\def\trans{{\,}^t}
\def\pa{{\partial}}
\def\al{\alpha}
\def\be{\beta}
\def\ga{\gamma}
\def\cal{\mathcal}
\def\Bbb{\mathbb}
\def\Hyp{\rm{Hyp}}
\def\E{{\mathcal E_\phi}}
\def\T{{\mathbb T}}
\def\muk{\mu^{1/2}K^{-3/4}}
\newcommand{\floor}[1]{\left\lfloor #1 \right\rfloor}
\newtheorem{thm}{Theorem}[section]
\newtheorem{proposition}[thm]{Proposition}
\newtheorem{cor}[thm]{Corollary}
\newtheorem{lemma}[thm]{Lemma}
\newtheorem{remark}[thm]{Remark}
\newtheorem{remarks}[thm]{Remarks}
\newtheorem{defn}[thm]{Definition}
\newtheorem{defns}[thm]{Definitions}%[section]
\begin{document}

\title[ Fourier restriction for smooth hyperbolic 2-surfaces]{Fourier restriction for smooth hyperbolic 2-surfaces}
%\\ VERSION  18.10.20  }

\author[S. Buschenhenke]{Stefan Buschenhenke}
\address{S. Buschenhenke:  Mathematisches Seminar, C.A.-Universit\"at Kiel,
Ludewig-Meyn-Stra\ss{}e 4, D-24118 Kiel, Germany}
\email{{\tt buschenhenke@math.uni-kiel.de}}
\urladdr{http://www.math.uni-kiel.de/analysis/de/buschenhenke}

\author[D. M\"uller]{Detlef M\"uller}
\address{D. M\"uller: Mathematisches Seminar, C.A.-Universit\"at Kiel,
Ludewig-Meyn-Stra\ss{}e 4, D-24118 Kiel, Germany}
\email{{\tt mueller@math.uni-kiel.de}}
\urladdr{http://www.math.uni-kiel.de/analysis/de/mueller}

\author[A. Vargas]{Ana Vargas}
\address{A. Vargas: Departmento de Mathem\'aticas, Universidad Aut\'onoma de  Madrid,
28049
Madrid,
Spain
}
\email{{\tt ana.vargas@uam.es}}
\urladdr{{http://matematicas.uam.es/~AFA/}}

\thanks{2010 {\em Mathematical Subject Classification.}
42B25}
\thanks{{\em Key words and phrases.}
hyperbolic  hypersurface, Fourier restriction}
\thanks{The first two authors were partially supported by the DFG grant MU 761/
11-2.\\
The third author was partially supported by grants PID2019-105599GB-I00 and
MTM2016-76566-P (Ministerio de Ciencia, Innovaci$\acute{\text{o}}$n y Universidades), Spain.}

\begin{abstract} 
We prove Fourier restriction estimates by means of the polynomial partitioning method  for compact subsets of any   sufficiently smooth hyperbolic
hypersurface in $\R^3.$ Our approach exploits in a crucial way the underlying hyperbolic geometry,  which leads to a novel notion of strong transversality and corresponding  ``exceptional'' sets. For the division of these exceptional  sets we make crucial and perhaps surprising use of a lemma on level sets  for sufficiently smooth one-variate functions from a previous article of ours. 

\end{abstract}

\maketitle

%\vfill\newpage

\tableofcontents

\thispagestyle{empty}
\setcounter{equation}{0}
\section{Introduction}\label{intro}
 
Let $S\subset \R^n$ be a sufficiently smooth hypersurface. The Fourier restriction problem, introduced by E. M. Stein in the seventies (for general submanifolds), asks for  the range  of exponents
$\tilde p$ and $\tilde q$ for which  an a priori  estimate of the form
\begin{align*}
\bigg(\int_{S}|\widehat{f}|^{\tilde q}\,d\sigma\bigg)^{1/\tilde q}\le C\|f\|_{L^{\tilde
p}(\R^n)}
\end{align*}
holds  true for   every Schwartz function   $f\in\mathcal S(\R^n),$ with a constant $C$
independent of $f.$ Here, $d\sigma$ denotes the Riemannian surface measure on $S.$

The sharp range in dimension  $n=2$ for curves with non-vanishing curvature was determined
through work by  C. Fefferman, E. M. Stein and A. Zygmund \cite{F1}, \cite{Z}. In higher
dimension, the sharp $L^{\tilde p}-L^2$ result  for hypersurfaces with
non-vanishing Gaussian curvature was obtained by E. M. Stein and P. A. Tomas \cite{To},
\cite{St1} (see also Strichartz \cite{Str}). Some more general classes of surfaces were
treated by A. Greenleaf \cite{Gr}. In work by I. Ikromov, M.
Kempe and D. M\"uller  \cite{ikm}  and Ikromov and M\"uller \cite{IM-uniform}, \cite{IM},
the sharp range of Stein-Tomas type    $L^{\tilde p}-L^2$  restriction
estimates has been  determined  for a large class  of smooth, finite-type hypersurfaces,
including all analytic hypersurfaces.

The question about  general  $L^{\tilde p}-L^{\tilde q}$ restriction
estimates is nevertheless still wide open. Fourier restriction to hypersurfaces  with non-negative principal curvatures has been studied intensively by many authors. Major progress was due to J. Bourgain in the nineties (\cite{Bo1}, \cite{Bo2}, \cite{Bo3}). At the end of that decade the bilinear method was introduced (\cite{MVV1}, \cite{MVV2}, \cite{TVV} \cite{TV1}, \cite{TV2},  \cite{W2}, \cite{T2},  \cite{lv10}). A new impulse to the problem has been given with  the multilinear method (\cite{BCT}, \cite{BoG}). The best results up to date have been obtained with the polynomial partitioning method, developed by L. Guth (\cite{Gu16},
\cite{Gu18}) (see also \cite{hr19} and \cite{WA18} for recent improvements).

\smallskip
For the case of hypersurfaces of  non-vanishing  Gaussian curvature but principal curvatures of different signs, besides Tomas-Stein type Fourier restriction estimates, until a few years ago the only case which had been studied successfully was the case of the hyperbolic paraboloid (or ``saddle") in  $\R^3$:
in 2005, independently S. Lee \cite{lee05} and A. Vargas \cite{v05}  established results analogous to Tao's theorem \cite{T2} on elliptic surfaces (such as the $2$ -sphere), with the exception of the  end-point, by means of the bilinear method. 

\smallskip
First results based on the bilinear approach for particular  one-variate perturbations of the saddle were eventually proved by the authors in \cite{bmv20}, \cite{bmvp19}  and \cite{bmvp20a}. Furthermore, B. Stovall \cite{Sto17b}  was able to include also the end-point case for the hyperbolic paraboloid.  Building on the papers \cite{lee05}, \cite{v05} and   \cite{Sto17b}, and by  strongly making use of Lorentzian symmetries,  even  global restriction estimates for one-sheeted hyperboloids have been established  recently by B. Bruce, D. Oliveira e Silva and B. Stovall \cite{bros20}, with extensions to higher dimensions by Bruce  \cite{br20b}. Results on higher dimensional hyperbolic paraboloids  have been reported  by A. Barron \cite{ba20}.  All these results are in the bilinear range given by \cite{T2}.

\smallskip

Improvements over the results  for the saddle  by means of an adaptation  of the polynomial partitioning method from  Guth's articles   \cite{Gu16} were achieved   by C. H. Cho and J. Lee \cite{chl17}, and J. Kim \cite{k17}.  
  Moreover, for a particular class of  one-variate perturbations of the hyperbolic paraboloid, an analogue of Guth's result had been proved by the authors in \cite{bmvp20b}, and more lately  were reported by S. Guo and C. Oh  for compact subsets of general polynomial surfaces with negative Gaussian curvature in \cite{go20}. Moreover, by making use of Lorentzian symmetries, B.  Bruce \cite{br20a} has established analogous results for compact subsets of the one-sheeted  hyperboloid.
\smallskip

\smallskip
In this article, we shall obtain the analogous result to \cite{Gu16} for compact subsets of  any  sufficiently smooth  hyperbolic surface.

\smallskip
More precisely, we shall  study embedded $C^m$- hypersurfaces  $S$ in $\R^3$ of sufficiently high degree of  regularity $m\ge 3$  which are {\it  hyperbolic} in the sense that the Gaussian curvature is strictly negative at every point, i.e., that at every point of $S$ one principal curvature is strictly positive, and the other one  is strictly negative.

\smallskip
As usual, it will be more convenient to use duality and work in the adjoint setting.  If
$\cR$ denotes the Fourier restriction operator $g\mapsto \cR g:=\hat g|_{S}$ to the surface
$S,$ its adjoint operator $\cR^*$ is given by $\cR^*f(\xi)=\ext f(-\xi),$ where
 $\ext=\ext_{S}$ denotes the ``Fourier extension'' operator given by
\begin{equation*}
\ext f(\xi):=\widehat{f\,d\sigma}(\xi)= \int_{S} f(x)e^{-i\xi\cdot x}\,d\sigma(x),
\end{equation*}
with  $f\in L^q(S,\sigma).$ The restriction problem is therefore
equivalent to the question of finding the appropriate range of exponents for which the
estimate
$$
\|\mathcal E f\|_{L^p(\R^3)}\le C\|f\|_{L^q(S,d\sigma)}
$$
holds true with a constant $C$ independent of the function   $f\in L^q(S,d\sigma).$ We shall here concentrate on local estimates of this form, where $S$ is replaced by a sufficiently small neighborhood of  a given point on $S.$ Such estimates then allow for estimates of the form
\begin{equation}\label{localefr}
\|\mathcal E_{S_c} f\|_{L^p(\R^3)}\le C_{S_c,p,q}\|f\|_{L^q(S,d\sigma)},
\end{equation}
for any compact subset $S_c$ of $S,$ where we have put
\begin{equation*}
\ext_{S_c} f(\xi):= \int_{S_c} f(x)e^{-i\xi\cdot x}\,d\sigma(x).
\end{equation*}
Our main result will be
\begin{thm}\label{mainresult1}
Assume  that $p>3.25$ and  $p>2q'.$  Then there is some sufficiently large $M(p,q)\in \N$ such that for any embedded hyperbolic hypersurface $S\subset \R^3$ of class $C^{M(p,q)}$ the estimate  \eqref{localefr} holds true, i.e., for any compact subset $S_c$ of $S,$ we have
$$
\|\mathcal E_{S_c} f\|_{L^p(\R^3)}\le C_{S_c,p,q}\|f\|_{L^q(S,d\sigma)}.
$$
\end{thm}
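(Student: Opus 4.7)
The plan is to follow the polynomial partitioning approach of Guth \cite{Gu16}, adapted to the hyperbolic setting in the spirit of \cite{bmvp20b} and \cite{go20}, but now allowing arbitrary sufficiently smooth perturbations of the saddle rather than only polynomial or one-variate ones. First I would localize and normalize: after a translation, rotation, and linear change of variables, any point of $S$ has a neighborhood on which $S$ is the graph $\{(x_1,x_2,\phi(x_1,x_2))\}$ of a $C^{M(p,q)}$ function $\phi$ with Hessian of signature $(+,-)$, and after a further linear adjustment $\phi(x_1,x_2)=x_1x_2+r(x_1,x_2)$ with $r$ having vanishing 2-jet at the origin. A parabolic-type rescaling argument then lets one make $r$ as small in the relevant $C^m$-norm as desired, at the cost of restricting to a small piece of $S$. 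This reduces the theorem to a quantitative extension estimate for a small smooth perturbation of the saddle $\phi_0(x)=x_1x_2$.

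Second, I would implement the polynomial partitioning recursion on the ball radius $R$, combined with a wave packet decomposition at scale $R^{-1/2}$. One picks a polynomial $P$ of degree $D$ whose zero set bisects $\|\mathcal Ef\|_{L^p(B_R)}^p$ equally into cells, and then decomposes the wave packets into those staying inside a cell (the \emph{cellular} term, handled by induction on $R$), those lying in a thin neighborhood of $\{P=0\}$ (the \emph{tangential} or algebraic term, handled by an induction on the radius exploiting the codimension-one geometry of $\{P=0\}$ together with a polynomial Wongkew-type estimate), and those crossing $\{P=0\}$ transversally (the \emph{transverse} term). The transverse term is to be controlled by the bilinear restriction estimate for the saddle due to Lee \cite{lee05} and Vargas \cite{v05}, which applies as soon as a pair of caps $\tau_1,\tau_2$ of $S$ is sufficiently transverse in the appropriate hyperbolic sense.

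The crux, and what I expect to be the main obstacle, is that in the hyperbolic setting the elliptic broad/narrow dichotomy is not sharp enough: two caps $\tau_1,\tau_2$ can be well-separated on $S$ yet still fail to be bilinearly transverse, because their normals can be close to a common direction tied to the asymptotic (null) lines of the indefinite Hessian. I would therefore introduce a quantitative notion of \emph{strong transversality} that measures the distance of $\tau_1,\tau_2$ from these hyperbolically exceptional configurations, run the polynomial partitioning inside the strongly transverse regime (where the bilinear Lee--Vargas estimate gives the required gain), and isolate the \emph{exceptional set} of pairs on which strong transversality fails. This exceptional set has a distinctive one-dimensional character: after freezing one asymptotic direction it is concentrated near curves in parameter space.

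Third, to dispose of the exceptional set, the strategy is to invoke the level set lemma for sufficiently smooth one-variate functions from our earlier work alluded to in the abstract. Along each asymptotic direction the obstruction reduces to the study of sublevel sets of a single $C^m$ function of one variable, for which the lemma yields a measure bound with a quantitative gain that improves with $m$. One then packages this gain as an additional dyadic decomposition, feeds it into the polynomial partitioning induction, and chooses the regularity threshold $M(p,q)$ large enough that the resulting one-variable gain beats the loss from the recursion on $R$. Summing the cellular, tangential, transverse and exceptional contributions yields the desired estimate in the range $p>3.25$, $p>2q'$.
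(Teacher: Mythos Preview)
Your outline tracks the paper's architecture: localize and normalize so that $\phi$ is a small $C^M$-perturbation of $x_1x_2$, then run Guth's polynomial partitioning with a broad/narrow dichotomy adapted to the hyperbolic geometry via a notion of strong transversality. Two points, however, are misassigned, and one key mechanism is missing.

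First, a minor correction: in the wall term the roles are reversed from what you write. The \emph{tangential} wave packets are the ones controlled by the bilinear (Lee--Vargas type) estimate, while the \emph{transversal} packets are handled by induction on the radius; compare Lemma~\ref{lemma3.8} and Proposition~\ref{prop3.9}.

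Second, and more substantively, the level-set lemma is not used to extract a direct ``measure bound with quantitative gain'' on the exceptional set that one then feeds into the recursion. Its role is combinatorial: via Corollary~\ref{sublevelset} one proves the Geometric Lemma~\ref{geometric}, which says that any family of pairwise not strongly separated caps is covered by at most $K^{3\varepsilon'}$ of the prefabricated rectangles $L\in\mathcal L$. This is what makes a workable notion of $\alpha$-broadness (relative to the intersection-closed family $\bar{\mathcal L}$) possible. The narrow contribution $f_L$ itself is then disposed of by an \emph{induction on scales}: one applies an anisotropic rescaling of the strip $L$ back to $\Sigma$ (Sections~\ref{movestrip}--\ref{indu}) and invokes the inductive hypothesis at the smaller radius $R'=K^{-3/4}R$. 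The crucial and non-obvious step your outline lacks is showing that the rescaled phase $\phi^s$ lands again in $\mathrm{Hyp}^M$. This is false for a generic anisotropic rescaling of a perturbation of the saddle; it only works because on the exceptional rectangle the second directional derivative of $\phi$ along the null direction $\omega$ is already $O(K^{-3/4})$ (see \eqref{nablanulldir}), and one then applies the derivative-interpolation Lemma~\ref{derivinterpolate} to propagate this smallness to the higher pure $\omega$-derivatives before rescaling. This is where the requirement that $M(p,q)$ be large actually enters, and without it the induction on scales does not close.
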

\smallskip

For the proof of this result, we shall consider the following classes of functions:
Let  $\Sigma:=[-1,1]\times [-1,1].$  For $M\in\N, M\ge 3,$ we denote by  $\Hyp^M=\Hyp^M(\Sigma)\subset  C^M(\Sigma)$ the  set of all  functions $\phi$    on $\Sigma$ satisfying the following properties:

 $\phi$ extends from $\Sigma$ to a $C^M$-function on $2\Sigma,$ also denoted by  $\phi,$  which satisfies the following   conditions \eqref{normal1},\eqref{regularity} on $2\Sigma:$ 
\begin{equation}\label{normal1}
\phi(0)=0, \ \nabla\phi(0)=0, \
D^2\phi(0)=\left(
\begin{array}{ccc}
 0 &   1  \\
 1 &  0   \\
\end{array}
\right),\\
\end{equation}
and
\begin{equation}\label{regularity}
\|\pa_x^\al\pa_y^\beta\phi\|_\infty \le 10^{-5} \quad \text {for} \quad 3\le \al+\beta\le M.
\end{equation}

\noindent {\bf Note: } (i)  $\phi\in C^M(\Sigma)$ lies in $\Hyp^M$ if and only if $\phi(x,y)$ is a small perturbation of $xy,$ in the sense that $\phi(x,y)=xy+\psi(x,y),$
where $\psi(0)=0, \ \nabla\psi(0)=0,$ and $D^2\psi(0)=0,$ and $\psi$ satisfies the estimates \eqref{regularity} (even on $2\Sigma$), with $\phi$ replaced by $\psi.$

(ii) If  $\phi\in\Hyp^M,$ then
\begin{equation}\label{phiest}
\max\big\{\|\phi_{xx}\|_\infty,  \|\phi_{yy}\|_\infty, \|\phi_{xy}-1\|_\infty\big\}\le 2\cdot 10^{-5}.
\end{equation}

Our key result then is
\begin{thm}\label{mainresult2}
Assume  that $p>3.25$ and  $p>2q'.$  Then there is some sufficiently large $M(p,q)\in \N$ such that for any $\phi\in  \Hyp^{M(p,q)}$  the Fourier extensions operator
$$
\ext _\phi f(\xi):=\int_{\Sigma} f(x,y) e^{-i(\xi_1 x+\xi_2 y+\xi_3\phi(x,y))}  \, dx dy
$$
associated to the graph $S_\phi$ of $\phi$ satisfies the estimate
\begin{equation}\label{mainest}
\|\ext_\phi f\|_{L^p(\R^3)}\le C_{p,q}\,\|f\|_{L^q(\Sigma)} \qquad \text {for every} \quad f\in \cS(\R^2),
\end{equation}
with a constant which is independent of $\phi$ and $f.$
\end{thm}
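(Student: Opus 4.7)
The plan is to prove Theorem \ref{mainresult2} via the polynomial partitioning method, in the vein of Guth's elliptic result and its hyperbolic adaptations referenced in the introduction. Fixing $\phi \in \Hyp^M$, one sets up an induction on the scale $R$ with a secondary induction on wave-packet counts, aiming to prove $\|\ext_\phi f\|_{L^p(B_R)} \le C R^\e \|f\|_{L^q(\Sigma)}$ for every ball $B_R \subset \R^3$. The main tool is a wave-packet decomposition $f = \sum_T f_T$ at scale $R^{1/2}$, which organizes $\ext_\phi f$ into essentially disjoint tubes of dimensions $R^{1/2}\times R^{1/2}\times R$. The argument then proceeds via a broad/narrow dichotomy: at points where the mass is spread across sufficiently transverse wave packets (broad points), a bilinear Fourier extension estimate for hyperbolic surfaces, whose bilinear range was identified by Lee and Vargas, delivers the desired bound in the assumed range $p > 2q'$, $p > 3.25$.

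At narrow points polynomial partitioning is invoked: a polynomial $P$ of degree $D \sim R^\e$ is chosen so that $\{P = 0\}$ partitions $B_R$ into cells of roughly equal $L^p$-mass, the inductive hypothesis is applied inside each cell, and the wall contribution from wave packets concentrated near $\{P=0\}$ is treated separately. The key novelty, flagged in the abstract, is that the elliptic notion of transversality does not suffice for a hyperbolic surface: the two families of asymptotic lines of the second fundamental form produce configurations in which wave packets have well-separated normals yet fail to be genuinely transverse. I would therefore introduce a stricter notion of \emph{strong transversality} adapted to hyperbolic geometry, and isolate the offending configurations in an \emph{exceptional set}. The crucial tool for dissecting this exceptional set is the one-variate level-set lemma from the authors' earlier work, designed to handle wave packets aligned along a single asymptotic direction.

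The hard part will be the bookkeeping of the exceptional sets across successive rounds of partitioning: after one round of cell decomposition, the exceptional contributions themselves must be re-decomposed, and one has to verify both that the level-set slicing effectively reduces those contributions to essentially one-variate estimates and that the accumulated losses are summable, so that the induction closes with the claimed exponents. A secondary technical point is uniformity in $\phi$: all constants must depend only on the bounds \eqref{regularity}, so that Theorem \ref{mainresult2} holds uniformly across $\Hyp^{M(p,q)}$. This uniformity is made possible by writing $\phi = xy + \psi$ with $\psi$ small in $C^M$; the wave-packet analysis, the transversality computations, and the perturbative study of the principal null directions can then all be controlled in terms of the fixed bounds $\|\pa_x^\al\pa_y^\be\psi\|_\infty \le 10^{-5}$.
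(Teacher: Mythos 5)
Your overall framework is right — polynomial partitioning, wave packets at scale $R^{1/2}$, a hyperbolic notion of strong transversality and exceptional sets, and the one-variate level-set lemma from the earlier paper — but you have the broad/narrow dichotomy running backwards, and this is not a cosmetic slip. In the paper (as in Guth's scheme), the \emph{narrow} contribution, i.e.\ the contribution from exceptional rectangles $\Delta\in\bar{\mathcal L}$ where strong transversality fails, is \emph{not} handled by polynomial partitioning; it is handled by a parabolic-type rescaling of each rectangle to $\Sigma$ and the induction on scales (Section \ref{reductiontobroad}, estimate \eqref{indest}). Polynomial partitioning, together with the cell/wall decomposition and the bilinear tangential estimate (Proposition \ref{prop3.9}, Lemma \ref{lemma4.11}), is applied to the \emph{broad} part (Theorem \ref{broadtheorem}). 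Related to this, you attribute the range $p>3.25$ to the Lee--Vargas bilinear estimate; the bilinear method only reaches $p>10/3$, and the point of the polynomial method is precisely to go below that. The bilinear estimate only enters as an $L^4$ ingredient for tangential tubes on the wall.

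Two further ideas are missing. First, because the statement of Theorem \ref{mainresult2} is a global estimate on $\R^3$, you need an explicit $\epsilon$-removal step: the paper reduces to the local $L^2$--$L^\infty$ estimate \eqref{mainresultest} on $B_R$ and then passes to \eqref{mainest4} using Kim's extension of Tao's $\epsilon$-removal theorem, choosing $p$ strictly between $3.25$ (resp.\ $2q'$) and $\tilde p$. Without this bridge the inductive bound $\lesssim R^\epsilon$ does not give a global $L^p$ inequality. Second, the rescaling of exceptional rectangles is the genuine obstruction here: unlike $xy$, the class $\Hyp^M$ is \emph{not} closed under the anisotropic scalings dictated by the rectangle geometry, nor under any convenient symmetry group. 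The paper's resolution is that the very failure of strong transversality forces the second directional derivative $\langle\omega,\nabla\rangle^2\phi$ along the rectangle's axis to be $O(\mu^{1/2}K^{-3/4})$ (equation \eqref{nablanulldir}), and Lemma \ref{derivinterpolate} upgrades this to control of all higher derivatives $\langle\omega,\nabla\rangle^m\phi$ at the right scale, so that the rescaled phase $\phi^s$ lands back in $\Hyp^M$. Your proposal notes the uniformity issue but not this structural mechanism that makes the induction close; simply "writing $\phi = xy+\psi$ with $\psi$ small" does not by itself survive the anisotropic rescaling.
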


Note that Theorem \ref{mainresult1} follows easily from Theorem \ref{mainresult2}. Indeed, if  $S$ is  an embedded hyperbolic hypersurface $S\subset \R^3$ of class $C^{M(p,q)},$ with $M(p,q)$ as in Theorem \ref{mainresult2}, and if $S_c$ is a compact subset, then by compactness we can localized to sufficiently small neighborhoods of any points $x^0$ in $S_c.$ So, after permuting coordinates, we may assume that near such a point $S$ is  given as the graph of a $C^{M(p,q)}$ function $\phi,$ and after translation and linear change of coordinates, that $x^0=0\,$ and that $S$ is the graph of $\phi$ over some sufficiently small neighborhood $U$ of the origin in $\R^2,$  where $\phi$ satisfies \eqref{normal1}. Finally, after applying a suitable isotropic scaling   by putting
$\tilde \phi(z):=\frac 1{r^2} \phi(rz),$ where $0<r\ll 1,$ assuming that $U$ was sufficiently small, we see that we can reduce to a function $\tilde \phi$ in
$\Hyp^{M(p,q)}(\Sigma).$
\smallskip

 Denote by $B_R$ the cube $B_R:=[-R,R]^3, \, R\ge 0.$ 
 In a similar way as in \cite{bmvp20b}, Theorem \ref{mainresult2} will be a consequence of the following local Fourier extension estimate:

\begin{thm}\label{mainresult}
Assume that  $3.25\geq q >2.6.$Then, for every $\epsilon>0,$  there is a sufficiently large $M(\epsilon)\in \N$ such that for any $\phi\in  \Hyp^{M(\epsilon)}$ the following holds true:   there is a constant $C_\epsilon$  such that for any  $R\ge1$
\begin{equation}\label{mainresultest}
\|\E f\|_{L^{3.25}(B_R)} \leq C_\epsilon R^\epsilon \|f\|_{L^2(\Sigma )}^{2/q}\, \|f\|_{L^\infty(\Sigma)}^{1-2/q},
\end{equation}
		
for all $f\in L^\infty(\Sigma)$.
\end{thm}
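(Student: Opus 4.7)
The plan is to prove \eqref{mainresultest} by a double induction on the scale $R$ and on the $L^2$-mass of $f$, implementing Guth's polynomial partitioning method \cite{Gu16} in the hyperbolic adaptation developed by the authors in \cite{bmvp20b}. Fix $\epsilon > 0$ and choose $M = M(\epsilon)$ sufficiently large; the inductive hypothesis asserts that \eqref{mainresultest} holds for every $\phi \in \Hyp^M$ at all smaller scales $R' \ll R$ and for all $f$ of smaller $L^2$-mass. After a standard wave packet decomposition at scale $R^{-1/2}$, we write $f = \sum_T f_T$, where the tubes $T$ have dimensions $R^{1/2} \times R^{1/2} \times R$ and $\E f_T$ is essentially concentrated in $T$ and directed along the normal to $S_\phi$ over an $R^{-1/2}$-cap $\tau_T$.

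Next I would apply a Bourgain--Guth broad-narrow decomposition at a fixed large scale $K^{-1}$: at each point $x$ either the significant mass of $\E f(x)$ comes from $O(1)$ caps of size $K^{-1}$ (narrow), or at least two suitably transverse caps contribute comparably (broad). The narrow part is handled by restricting to each cap and rescaling via the isotropic dilation $\tilde\phi(z) = r^{-2}\phi(rz)$ that preserves $\Hyp^M$ (exactly as in the reduction after Theorem \ref{mainresult2}), applying the inductive hypothesis at scale $\sim R/K^2$, and summing; this yields an $\epsilon$-gain once $K$ is large.

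For the broad part I apply Guth's polynomial partitioning to $|\E f|^{3.25}$ on $B_R$, obtaining a polynomial $P$ of degree $D \approx R^{\delta_0}$ (with $\delta_0 \ll \epsilon$) whose zero set $Z(P)$ partitions the relevant mass into $\sim D^3$ cells of comparable contribution. In the \emph{cellular case}, where most of the norm lives in the cell interiors, each tube meets $O(D)$ cells, so the relevant $L^2$-mass per cell drops, and a cell-by-cell invocation of the induction hypothesis closes the case. In the \emph{algebraic case}, the norm concentrates in an $R^{1/2+\delta}$-neighborhood of $Z(P)$; I split the surviving wave packets into transverse and tangent families. Transverse tubes pierce $Z(P)$ at few points and are treated much like the cellular case, so the crux is the tangent subfamily.

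The main obstacle is precisely this tangent case. In Guth's elliptic setting one combines a Wongkew-type bound on the measure of the tangent neighborhood with the bilinear restriction theorem of Tao. In our hyperbolic setting the bilinear $L^{3.25-\delta}$ estimates of \cite{lee05,v05} cannot be applied uniformly, because transversality in the standard Euclidean sense degenerates whenever the two caps are related by a quasi-Lorentzian direction of the form $D^2\phi \approx \bigl(\begin{smallmatrix}0&1\\1&0\end{smallmatrix}\bigr)$. My plan is to introduce the authors' notion of \emph{strong transversality}, which is tailored to this Lorentzian symmetry: cap pairs that are strongly transverse yield a bilinear estimate of the required strength and, combined with the polynomial tangent-tube bound, close their contribution. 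The remaining \emph{exceptional} pairs, where strong transversality fails, will be controlled by invoking the level-set lemma for sufficiently smooth one-variate functions from the authors' previous article --- applied to the one-dimensional phase obtained after freezing a strongly transverse direction --- which bounds the measure of the parameter subset on which the relevant phase is degenerate, making the exceptional contribution negligible. Summing the cellular, transverse, strongly-transverse tangent, and exceptional contributions and optimizing the parameters $D$, $K$, and $\delta_0$ closes both inductions and gives \eqref{mainresultest} with the admissible loss $R^\epsilon$.
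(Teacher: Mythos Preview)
Your architecture misplaces the handling of the ``exceptional'' (not strongly separated) caps, and this is a genuine gap. In the hyperbolic setting, the set of caps that fail strong transversality is \emph{not} sparse in the sense you describe: for instance, all $\sim K$ caps lying along a single thin strip of dimensions $\sim K^{-3/4}\times b$ (with $K^{-3/4}\le b\le K^{-\epsilon'}$) are mutually not strongly separated, so the narrow part cannot be reduced to ``$O(1)$ caps of size $K^{-1}$'' handled by isotropic dilation. The isotropic rescaling $\tilde\phi(z)=r^{-2}\phi(rz)$ only treats the large-cap component $\mathcal L_3$; the strip components $\mathcal L_1,\mathcal L_2$ require the anisotropic rescaling of Sections~\ref{movestrip}--\ref{indu}, which depends on the affine change $T_{z_0}$ diagonalizing $D^2\phi$ and, crucially, on Lemma~\ref{derivinterpolate} to verify that the rescaled phase $\phi^s$ lands back in $\Hyp^M$. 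Without this, your induction-on-scales step for the narrow part does not close.

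Correspondingly, your plan to dispose of the exceptional pairs \emph{inside the tangent case} by the level-set lemma cannot work: the exceptional region is not of small measure, and the level-set lemma (Corollary~\ref{sublevelset}) is used in the paper not to bound measure but to bound the \emph{number} of strips needed to cover a family of mutually not strongly separated caps (the Geometric Lemma~\ref{geometric}). This is applied upstream, in the \emph{definition} of $\alpha$-broadness: a point is declared broad only if no single $\Delta\in\bar{\mathcal L}$ (strip or large cap) dominates. With that definition, Lemma~\ref{lemma3.8} guarantees that at broad points one always finds two strongly separated caps in the tangent contribution, so the bilinear estimate applies uniformly there. The narrow part---points dominated by some $\Delta\in\bar{\mathcal L}$---is then handled by the rescaling and induction on scales as in Section~\ref{reductiontobroad}. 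Your proposal has the right ingredients (strong transversality, the level-set lemma, polynomial partitioning) but assembles them in an order that leaves the strip contribution uncontrolled.
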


Indeed,  a simple interpolation argument as in \cite{bmvp20b} shows that the estimate in Theorem \ref{mainresult} implies the following:

If   $p>3.25$ and  $p>2q',$ then 
\begin{equation}\label{mainestR}
\|\ext_\phi f\|_{L^p(B_R)}\le C_{p,q,\epsilon} R^\epsilon\,\|f\|_{L^q(\Sigma)},
\end{equation}
with a constant which is independent of $\phi$ and $f.$ 

Finally,  as usual, we can invoke an  $\epsilon$-removal theorem to pass to Theorem \ref{mainresult2}, but we have to be a bit more precise here than usually: 

From  \cite [Theorem 5.3]{k17} (which, as Kim  observes, is an immediate extension of Tao's $\epsilon$-removal theorem  \cite [Theorem 1.2]{T99}),  applied to the adjoints  of the restriction operators,
 we see that  the estimate in \eqref{mainestR} implies  the following: there is a constant $C>0$ such that 
if 
\begin{equation}\label{eremov1}
\frac 1 p>\frac 1{\tilde p}+\frac {C}{-\log \epsilon},
\end{equation}
then 
\begin{equation}\label{mainest4}
\|\ext_\phi f\|_{L^{\tilde p}(\R^3)}\le C_{ \tilde p,q}\,\|f\|_{L^q(\Sigma)}.
\end{equation}
Thus, given $\tilde p$ with    $\tilde p>3.25$ and  $\tilde p>2q',$ we can first choose an appropriate $p$ such that  $\tilde p>p>3.25$ and  $\tilde p>p>2q',$ and then an appropriate $\epsilon=\epsilon(\tilde p,q)>0$ so that \eqref{eremov1} holds true, hence also  \eqref{mainest4}. This proves Theorem \ref{mainresult2}.

\smallskip 
Later in our proofs we shall always assume without further mentioning that $\ve$ is sufficiently small.

\bigskip

\noindent\textsc{Outline of the paper:} 
Since Guth's polynomial partitioning method has been discussed in various papers by now, we shall  be brief in many parts and refer to Guth \cite{Gu16} and our previous paper \cite{bmvp20b} whenever possible. Instead, we will focus on the novelties of our approach, so that some familiarity  with the polynomial partitioning method is recommend.
\smallskip

A crucial property in restriction estimates is ``strong''  transversality (which we shall define precisely at a later stage). A major task in our paper will then be to understand the "exceptional" sets which  lack this kind of transversality.  In case of the unperturbed hyperbolic paraboloid, i.e., the graph of $xy,$   any two small caps that are not strongly transversal have to be contained in the part of the surface lying over a vertical or horizontal strip in the $(x,y)$-plane. For our general hyperbolic surfaces,  the "exceptional" sets are again certain rectangles, but their size and slope strongly depending on the geometry of the surface.  Rescaling arguments for functions restricted to a rectangle are  here a priori difficult, since, unlike for the unperturbed hyperbolic paraboloid, our class of phase functions $\Hyp^m$ is not closed under anisotropic scalings, nor any other suitably large group of symmetries, such as the Lorentz group in case of the one-sheeted hyperboloid.  However, we will show that the lack of strong transversality eventually  still does allow for a rescaling argument.
\smallskip

The article is organized as follows: In Section \ref{auxiliary} we present two  important auxiliary results: On the one hand our "sublevel lemma"  (Corollary \ref{sublevelset}) that will allow us to control certain sub-level sets, on the other hand Lemma \ref{derivinterpolate} which will allow  to control certain derivatives in the rescaling argument.

In Section \ref{geometry} we relate the  hyperbolic geometry of our surfaces  to the  strong transversality property  which is needed to establish the required bilinear estimates. In particular, we shall derive a ''hyperbolic factorization'' of the crucial transversality  function $\Gamma_{z}(z_1,z_2)$   in \eqref{factori}, which will be of central importance to our approach and will  lead to our notation of ``strong separation'' of caps. Our definition of "exceptional" sets will be based on this notion.  In the final Subsections \ref{indu}  and \ref{movestrip}  of Section \ref{geometry} we shall show how to move such exceptional rectangles  into vertical position and  prepare for the subsequent  rescaling argument.

Motivated by these "exceptional" sets, we will devise a notion of $\alpha$-broadness adapted to our class of surfaces in Section \ref{broad}, and prove our crucial  "Geometric Lemma", and we shall show that any two small caps that are not strongly separated  have to be contained in the part of the surface lying over some rectangle  (of possibly  quite arbitrary direction)  or be contained in a larger cap, thus ensuring that a family of pairwise not strongly separated  caps  is in some sense sparse.

In Section  \ref{reductiontobroad} we reduce our main result to estimates for the broad part of the extension operator, and in Section \ref {proofthm} we outline  the actual polynomial partitioning argument and indicate which changes will be required  compared to previous work, in particular to \cite{Gu16} and \cite{bmvp20b}. Here, in particular, we will be brief and only highlight the steps that differ from previous work.

\bigskip

\noindent\textsc{Convention:}
Unless  stated otherwise, $C > 0$ will stand for an absolute constant whose value may vary
from occurrence to occurrence. We will use the notation $A\sim_C B$  to express  that
$\frac{1}{C}A\leq B \leq C A$.  In some contexts where the size of $C$ is irrelevant we
shall drop the index $C$ and simply write $A\sim B.$ Similarly, $A\lesssim B$ will express
the fact that there is a constant $C$ (which does not depend  on the relevant quantities
in
the estimate) such that $A\le C B,$  and we write  $A\ll B,$ if the constant $C$ is
sufficiently small.
\medskip

%\begin{center}
%{\textsc{Acknowledgments}}
%\end{center}
%The authors would like to express their sincere gratitude to the referee for  many valuable  suggestions which have greatly helped to improve the presentation of the material in this article.

%%%%%%%%%%%%%%%%%%%%%%%%%%%%%%

\setcounter{equation}{0}

\section{Auxiliary results}\label{auxiliary}
In this section, we prove two auxiliary results which will be crucial for our analysis, but which may also be of independent interest.   We begin by recalling the following  theorem from our previous paper \cite{bmvp20a}:

\begin{thm}\label{levelset}
Let $I=[a,b]$ be a compact interval and $g\in C^r(I, \R)$, $r\geq1,$ and put  $C_r:=\|g^{(r)}\|_{L^\infty(I)}.$ Then there exists a decomposition of
$\{g\neq0\}$ into pairwise disjoint intervals $J_{\lambda,\iota}$,  where $\lambda$ ranges over the set of all positive dyadic numbers $\lambda\leq \|g\|_\infty,$  and where for any given $\lambda,$ the index $\iota$ is  from some index set $\mathcal J_\lambda$, such that the following hold true:
\begin{enumerate}
\item $|\J_\lambda|\leq 10r\big(1+|I|C_r^{1/r} \lambda^{-1/r}\big) \lesssim 1+\lambda^{-1/r}$.
\item For any $\lambda$ dyadic, $\iota\in\J_\lambda$ and any $t\in J_{\lambda,\iota}$ we have $\frac12\lambda <|g(t)|<4\lambda.$
\end{enumerate}
\end{thm}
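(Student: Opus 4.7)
The plan is to use Taylor's theorem on short subintervals of $I$ to reduce the problem to counting level sets of polynomials of degree $r-1$, for which the level-set structure is controlled by the degree.

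First, fix the natural length scale $\ell:=(r!\,\lambda/(4C_r))^{1/r}$, chosen so that the order-$r$ Taylor remainder of $g$ on any interval of length $\le\ell$ is bounded by $\lambda/4$. Partition $I$ into $M\le 1+|I|/\ell$ consecutive subintervals $I_1,\dots,I_M$ of length at most $\ell$, so that $M\lesssim 1+|I|C_r^{1/r}\lambda^{-1/r}$. On each $I_j$, pick a reference point $t_j\in I_j$ and write $g(t)=P_j(t)+R_j(t)$, where $P_j$ is the Taylor polynomial of $g$ of degree at most $r-1$ at $t_j$ and $|R_j(t)|\le\lambda/4$ throughout $I_j$.

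For each dyadic $\lambda$, define the intervals $J_{\lambda,\iota}$ as the connected components of the dyadic shell $E_\lambda:=\{t\in I:\lambda\le|g(t)|<2\lambda\}$. These sets partition $\{g\ne 0\}$ as $\lambda$ ranges over positive dyadic numbers $\le\|g\|_\infty$, and property (ii) is automatic (with the stronger range $\lambda\le|g|<2\lambda$). To obtain property (i) it suffices to bound the number of components of $E_\lambda\cap I_j$ by $O(r)$ for each $j$, since summing over the $M$ pieces gives the claimed estimate.

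For this count I would exploit that on $E_\lambda\cap I_j$ one has $|P_j|\in[3\lambda/4,9\lambda/4)$ and $\sgn(g)=\sgn(P_j)$ (the latter because $|P_j|\ge 3\lambda/4>|R_j|$). Since $P_j$ has degree at most $r-1$, it has at most $r-2$ critical points in $I_j$, hence at most $r-1$ monotonicity pieces. On each monotonicity piece of $P_j$, the function $|P_j|$ is either monotonic or V-shaped (the latter when $P_j$ changes sign), so its preimage of $[3\lambda/4,9\lambda/4)$ contributes at most two components, for a total of at most $2(r-1)$ components of $\{|P_j|\in[3\lambda/4,9\lambda/4)\}\cap I_j$. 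Within each such polynomial component $B$, a further analysis using Taylor's theorem for $g'=P_j'+R_j'$ with $|R_j'|\lesssim C_r\ell^{r-1}/(r-1)!$ shows that the oscillations of $R_j$ cannot subdivide $B$ into more than $O(1)$ sub-components of $E_\lambda$. Summing over $j$ yields
\[
|\mathcal J_\lambda|\le 10r\bigl(1+|I|C_r^{1/r}\lambda^{-1/r}\bigr).
\]

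The hard part is the last step: a subset of a set with few components can a priori have many components, so one must show that the oscillations of $g-P_j=R_j$ within a single polynomial component $B$ do not create many additional sub-components of $E_\lambda\cap B$. This is handled by applying the Taylor/level-set argument inductively at the level of derivatives, exploiting that $|R_j'|$ is small relative to $|P_j'|$ away from the critical points of $P_j$; neighbourhoods of those critical points are in turn controlled by the analogous argument for $g''$ and $P_j''$, and so on, the recursion terminating after at most $r-1$ steps and contributing only a bounded multiplicative factor that the constant $10r$ easily absorbs.
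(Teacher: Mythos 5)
Your plan of localizing to intervals of length $\ell\sim(\lambda/C_r)^{1/r}$, Taylor--expanding, and counting via the degree of the Taylor polynomial is a natural first idea, and the scale $\ell$ and the factor $M\lesssim 1+|I|C_r^{1/r}\lambda^{-1/r}$ are the right ones. But the construction you then propose --- taking $J_{\lambda,\iota}$ to be the connected components of the dyadic shell $E_\lambda=\{\lambda\le|g|<2\lambda\}$ --- cannot work, and the step you flag as ``the hard part'' is not merely an omitted argument but a false claim.

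Here is a concrete counterexample, already in the base case $r=1$. Take $I=[0,2\pi]$ and $g(t)=\lambda+\epsilon\sin(Nt)$ with $0<\epsilon<\lambda/4$ and $N$ a large integer. Then $C_1=\|g'\|_\infty=\epsilon N$, and the right-hand side of (i) is $10\bigl(1+2\pi\,\epsilon N\,\lambda^{-1}\bigr)$, which is $O(1)$ once $\epsilon\ll\lambda/N$. Yet $\{\,\lambda\le g<2\lambda\,\}=\{\,\sin(Nt)\ge0\,\}$ has about $N$ connected components, which can be made arbitrarily large. In your framework: for $r=1$ the Taylor polynomial $P_j$ is a constant, so the ``polynomial component'' $\{|P_j|\in[3\lambda/4,9\lambda/4)\}$ is all of $I_j$, and your proposed recursion ``terminating after at most $r-1$ steps'' has literally nothing to say. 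More generally, a bound on $g^{(r)}$ gives no modulus-of-continuity control on $g^{(r-1)}$, which is exactly what would be needed to cap the number of times $g-P_j$ crosses a threshold inside one polynomial component; a subset of a set with $O(r)$ components can indeed have unboundedly many components, and here it does.

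The point you are missing is that the apparently wasteful factor-$8$ window in (ii), $\tfrac12\lambda<|g|<4\lambda$, is not slack that can be tightened to a factor $2$; it is the whole mechanism of the lemma. In the example above, the single interval $J_{\lambda,1}=[0,2\pi]$ already satisfies (ii) (since $g$ lives in $(\lambda-\epsilon,\lambda+\epsilon)\subset(\lambda/2,4\lambda)$), so $|\mathcal J_\lambda|=1$ and the theorem is trivially verified --- but only because one is allowed to take $J_{\lambda,\iota}$ that span the fast small oscillations rather than being forced to chop at every crossing of $\lambda$ or $2\lambda$. By defining $J_{\lambda,\iota}$ as components of $E_\lambda$ and then noting that (ii) holds ``with the stronger range $\lambda\le|g|<2\lambda$,'' you have thrown away precisely the slack the construction needs. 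A correct proof must produce intervals chosen so that the oscillations of $g$ around a level are absorbed into a single $J_{\lambda,\iota}$ whenever $g$ stays within the factor-$8$ band (for instance by working with level sets of the local Taylor polynomial $P_j$, or with a stopping-time/greedy selection), and only this exploitation of the wider window makes the count $O(r)$ per subinterval $I_j$ attainable.
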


The following corollary will  later allow us  to control certain sub-level sets  which will be important  for our  definition of broad points.

\begin{cor}\label{sublevelset}
Let $I=[a,b]$ be a compact interval and $g\in C^r(I, \R)$, $r\geq1,$ and put  $C_r:=\|g^{(r)}\|_{L^\infty(I)}.$ Then for any $0< \lambda\leq \|g\|_\infty,$  there is a finite family of pairwise disjoint intervals $I_{\lambda,i}$,  where  the index $i$ is  from some  index set $\mathcal I_\lambda$, so that the following hold true:
\begin{enumerate}
\item $|\I_\lambda|\leq 30 r\big(1+|I|C_r^{1/r} \lambda^{-1/r}\big) \lesssim 1+\lambda^{-1/r}$.
\item If we denote by $V_\la$ the union $\bigcup\limits_{i\in \mathcal I_\la}I_{\la,i}$ of all the intervals $ I_{\la,i},$ then
 $$
 \{|g|< \la\}\subset V_\la\subset \{|g|<8\la\}.
 $$
\end{enumerate}
\end{cor}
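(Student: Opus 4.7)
My plan is to construct $\mathcal{I}_\lambda$ as the collection of connected components of the open set $\{|g|<8\lambda\}$ that meet the sublevel set $\{|g|<\lambda\}$. Each such component is an interval in $I$, and they are automatically pairwise disjoint. Condition (ii) of the corollary is then immediate: $V_\lambda\subset\{|g|<8\lambda\}$ by construction, and every point of $\{|g|<\lambda\}$ lies in its component of $\{|g|<8\lambda\}$, which belongs to $\mathcal{I}_\lambda$ by definition, and hence to $V_\lambda$.

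The substantive content is the cardinality bound (i), for which I will apply Theorem \ref{levelset}. That theorem yields the dyadic partition $\{J_{\mu,\iota}\}$ of $\{g\neq0\}$ with $\mu/2<|g|<4\mu$ on each $J_{\mu,\iota}$. The key claim is that every non-trivial component $C\in\mathcal{I}_\lambda$ (meaning $C\neq I$) contains at least one interval $J_{\mu,\iota}$ with $\mu$ at a dyadic scale comparable to $\lambda$. Indeed, since $C$ is a component of $\{|g|<8\lambda\}$ with $C\neq I$, any boundary point of $C$ lying in the interior of $I$ satisfies $|g|=8\lambda$ by continuity, so on the connected set $C$ the continuous function $|g|$ has $\sup_C|g|=8\lambda$ (approached but not attained), while $\inf_C|g|<\lambda$ by the intersection condition. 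The intermediate value theorem therefore produces $t^*\in C$ with $|g(t^*)|=\lambda$. This $t^*$ lies in a unique $J_{\mu,\iota}$, and the bracketing $\mu/2<\lambda<4\mu$ forces $\mu\in(\lambda/4,2\lambda)$; at most three dyadic values fit in this range, and each satisfies $4\mu<8\lambda$. The latter guarantees $J_{\mu,\iota}\subset\{|g|<8\lambda\}$, and since $t^*\in J_{\mu,\iota}\cap C$ and $J_{\mu,\iota}$ is a connected subset of $\{|g|<8\lambda\}$, we conclude $J_{\mu,\iota}\subset C$.

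Since the components of $\mathcal{I}_\lambda$ are pairwise disjoint, the assignment $C\mapsto J_{\mu,\iota}$ is injective, so $|\mathcal{I}_\lambda|$ is bounded by the total number of $J_{\mu,\iota}$'s at the (at most three) relevant dyadic scales, plus $1$ for the trivial case $C=I$. Invoking the bound $|\mathcal{J}_\mu|\leq 10r(1+|I|C_r^{1/r}\mu^{-1/r})$ from Theorem \ref{levelset} at these scales, and noting that all the relevant $\mu$ satisfy $\mu\gtrsim\lambda$ so that $\mu^{-1/r}\lesssim\lambda^{-1/r}$, yields the asserted estimate $|\mathcal{I}_\lambda|\leq 30r(1+|I|C_r^{1/r}\lambda^{-1/r})$.

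The main obstacle is calibrating the IVT threshold: it must be large enough to lie strictly between the inner values $<\lambda$ and the boundary value $8\lambda$ so the IVT genuinely forces a crossing, yet small enough that the resulting dyadic levels $\mu$ satisfy $4\mu\leq 8\lambda$ so the associated $J_{\mu,\iota}$ is \emph{contained} in the component $C$ rather than merely \emph{meeting} it. The choice of threshold $\lambda$ (or $2\lambda$ when $\lambda$ is itself dyadic) threads this needle, restricting the admissible dyadic $\mu$ to the narrow window where both properties hold.
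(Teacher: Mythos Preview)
Your approach is correct and takes a genuinely different route from the paper's. The paper first reduces to dyadic $\lambda$, decomposes $\{|g|<\lambda\}$ into its connected components $U_{\lambda,\nu}$, observes that each has an endpoint $t_{\lambda,\nu}$ with $|g(t_{\lambda,\nu})|=\lambda$, attaches to it the interval $J_\lambda^\nu$ from Theorem~\ref{levelset} (at scale $\lambda$ or $\lambda/2$) containing that endpoint, and then takes $V_\lambda$ to be the union of the glued pieces $U_{\lambda,\nu}\cup J_\lambda^\nu$, re-decomposed into components. Your construction is more direct: you take the components of $\{|g|<8\lambda\}$ meeting $\{|g|<\lambda\}$ outright and locate a $J$-interval \emph{inside} each via the intermediate value theorem, which makes property (ii) tautological and avoids the glue-then-redecompose step. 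One small point: with your threshold $|g(t^*)|=\lambda$, the window $\mu\in(\lambda/4,2\lambda)$ can contain three dyadic scales when $\lambda$ is not dyadic, and summing $10r(1+|I|C_r^{1/r}\mu^{-1/r})$ over those three scales gives roughly $30r(1+4^{1/r}|I|C_r^{1/r}\lambda^{-1/r})$ rather than the stated constant $30r$. The qualitative bound $\lesssim 1+\lambda^{-1/r}$ is unaffected, and the exact constant is recovered if you take the IVT threshold to be a dyadic $\lambda_0\in[\lambda,2\lambda)$: then only the two scales $\mu\in\{\lambda_0/2,\lambda_0\}$ occur (both with $4\mu\le 4\lambda_0<8\lambda$ and $\mu\ge\lambda_0/2\ge\lambda/2$), and the same arithmetic as in the paper yields $30r(1+|I|C_r^{1/r}\lambda^{-1/r})$.
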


\begin{proof}
It is sufficient to  prove this for any  dyadic number $\la,$ however, with the slightly  stronger estimates
\begin{equation}\label{levelest1}
 \{|g|< \la\}\subset V_\la\subset \{|g|<4\la\}.
\end{equation}
We shall consider $I$ to be endowed with its relative topology from $\R.$
Consider then the open subset $U_\la:=\{|g|<\la\}$  of $I.$ We decompose it into its connected components $U_{\la,\nu},$ where $\nu$ will be from an at most countable index set. The $U_{\la,\nu}$ are then open subintervals of $I.$ Observe that if such an interval $U_{\la,\nu}$ has endpoints $\alpha<\beta,$ then $|g(\alpha)|=\la$ if $\alpha>a,$ and  $|g(\beta)|=\la$ if $\beta<b.$ Moreover, if $\alpha=a$ and $\beta=b,$  the case where
 $|g(\alpha)|<\la$  and $|g(\beta)|<\la$ cannot arise, since then we would have $U_{\la,\nu}=I$ and  $\|g\|_\infty<\la,$ contradicting our assumptions.
Thus we see that every $U_{\la,\nu}$ will have at least one endpoint, say $t_{\la,\nu},$ such that $|g(t_{\la,\nu})|=\la.$
\smallskip

Next, according to Theorem \ref{levelset}, the point $t_{\la,\nu}$ must be contained in either  one of the intervals $J_{\la,\iota},$ or in  one of the intervals $J_{\la/2,\iota}.$ This interval is unique, and we denote it by $J_{\la}^\nu.$ Observe that then also $U_{\la,\nu}\cup J_\la^\nu$ is an interval, and that $|g| <4\la$ on $U_{\la,\nu}\cup J_\la^\nu.$

Let us finally put
$$
V_\la:=\bigcup\limits_{\nu}U_{\la,\nu}\cup J_\la^\nu.
$$
Then clearly $\{|g|< \la\}=U_\la\subset V_\la,$ and $|g|< 4\la$ on $V_\la.$ Moreover, if we decompose $V_\la=\bigcup\limits_{i\in \mathcal I_\la}I_{\la,i}$ into its connected components $I_{\la,i},$ then each $I _{\la,i}$ is an interval. And, if $t$ is any point in $I _{\la,i},$ then there is some $\nu_i$ so that
$t\in U_{\la,\nu_i}\cup J_\la^{\nu_i},$ so that   $U_{\la,\nu_i}\cup J_\la^{\nu_i}\subset I _{\la,i}.$ The mapping $I _{\la,i}\mapsto J_\la^{\nu_i}$ is clearly injective, since the intervals $I _{\la,i}$ are pairwise disjoint, so that $|\I_\lambda|\leq |\J_{\la/2}|+|\J_\lambda|.$ The estimate in (i) follows thus from estimate (i) in Theorem \ref{levelset}.
\end{proof}

 The following remark shows that we may even assume that the intervals from  Corollary \ref{sublevelset} are not too short. We shall not directly make use of this remark, but the same idea will be used later in Section \ref{broad} to show that we may choose the rectangles in our definition of $\alpha$-broadness sufficiently long.
\begin{remark}\label{thicklevelest}
If $\|g'\|_\infty\le 1,$ we may even assume that all the intervals $I_{\la,i}$ in Corollary \ref{sublevelset} have length $|I_{\la,i}|\ge C\la.$

More precisely, assume that $g\in C^r(I, \R)$ satisfies the assumptions of Corollary  \ref{sublevelset}, that $C>0$ is given, and that in addition
$\|g'\|_\infty\le 1.$ Then for any $0< \lambda\leq \|g\|_\infty$ such that $C\la\le |I|,$  there is a finite family of pairwise disjoint open intervals $I_{\lambda,i}$ of length   $|I_{\la,i}|\ge C\la,$ where  the index $i$ is  from some  index set $\mathcal I_\lambda$, so that the following hold true:
\begin{enumerate}
\item $|\I_\lambda|\leq 30 r\big(1+|I|C_r^{1/r} \lambda^{-1/r}\big) \lesssim 1+\lambda^{-1/r}$.
\item If we denote by $V_\la$ the union $\bigcup\limits_{i\in \mathcal I_\la}I_{\la,i}$ of all the intervals $ I_{\la,i},$ then
 $$
 \{|g|< \la\}\subset V_\la\subset \{|g|<(8+C)\la\}.
 $$
\end{enumerate}
\end{remark}
\begin{proof}
Let us denote for $\de>0$ by $A^\de:=(A+(-\de,\de))\cap I$ the $\de$-thickening  within the interval $I$ of any subset $A$ of $I.$ For this proof, let us endow the quantities appearing in Corollary \ref{sublevelset} with a superscript $\tilde{},$ so that for instance $\tilde I_{\la,\tilde i}, \tilde i\in \tilde{ \mathcal I_\lambda}$ denote the intervals devised in this corollary. Then clearly, with $\de:=C\la,$
 $$
 \{|g|< \la\}\subset  \{|g|< \la\}^ \de\subset V_\la^\de=\bigcup\limits_{\tilde i\in \tilde{\mathcal I_\la}}(\tilde I_{\la,\tilde i})^\de\subset \{|g|<8\la\}^\de
 \subset \{|g|<(8+C)\la\}.
 $$
 Let us again decompose the set $V_\la^\de=\bigcup\limits_{i\in \mathcal I_\la}I_{\la,i}$ into its connected components $I_{\la,i}.$  Since $V_\la^\de$ is open, the $I_{\la,i}$ are  open intervals. Moreover, clearly any $I_{\la,i}$ must contain at least one of the intervals $(\tilde I_{\la,\tilde i})^\de,$ so that its length must at least be $\de=C \la,$ and since the mapping from $i$ to the chosen index $\tilde i$ is injective, we see that
 $|\I_\lambda| \le |\tilde{\I_\lambda}|.$
\end{proof}

\medskip
The next lemma will become important for certain induction on scales arguments.
\begin{lemma}\label{derivinterpolate} Let $g\in C^k(I,\R),$ where $I$ is an interval of length $|I|=b,$ and $k\in\N,$ and assume that
\begin{equation}\label{apriori}
\|g^{(m)}\|_\infty\le c_m,\quad  m=0, \dots, k, \quad \text{ where} \quad\ c_0<1.
\end{equation}

Let $\ve >0,$ and assume that $k\ge 1/\ve,$ and that  $b\le (c_0)^\ve.$ Then there are constants $\tilde C_m(\ve)\ge 0$ depending only on $\ve $ and the constants $c_1, \dots, c_k$ (increasing with the values of the $c_j$), but not on $b$ and $c_0,$ so that
\begin{equation}\label{aposteriori}
\|g^{(m)}\|_\infty\le c_0\, \tilde C_m(\ve) b^{-m}, \qquad  m=0, \dots, k.
\end{equation}
\end{lemma}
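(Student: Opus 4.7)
The plan is to reduce the lemma to the classical Landau--Kolmogorov interpolation inequality for derivatives on a finite interval and then use the hypothesis $b\le c_0^\ve$ with $k\ve\ge 1$ to absorb the top-derivative contribution into the $c_0 b^{-m}$ term. The only non-trivial input is the interval version of the interpolation inequality; the rest is arithmetic.

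First I would invoke the classical estimate: for every $g\in C^k(I,\R)$ with $|I|=b$ and every $0\le m\le k$,
\begin{equation}\label{plan-LK}
\|g^{(m)}\|_\infty\le K_{k,m}\bigl(b^{-m}\|g\|_\infty+b^{k-m}\|g^{(k)}\|_\infty\bigr),
\end{equation}
with a constant $K_{k,m}$ depending only on $k$ and $m$. If \eqref{plan-LK} is not taken for granted, the standard derivation is short: partition $I$ into subintervals $J$ of length comparable to $b/k$; on each $J$ let $p$ be the Lagrange interpolant of $g$ at $k$ equispaced nodes in $J$, so that the textbook pointwise remainder estimate yields $\|(g-p)^{(j)}\|_{L^\infty(J)}\lesssim b^{k-j}\|g^{(k)}\|_\infty$ for $0\le j\le k$; finally, Markov's inequality applied to the polynomial $p$ of degree $\le k-1$ on $J$ gives $\|p^{(m)}\|_{L^\infty(J)}\lesssim b^{-m}\|p\|_{L^\infty(J)}\lesssim b^{-m}\bigl(\|g\|_\infty+b^k\|g^{(k)}\|_\infty\bigr)$, and \eqref{plan-LK} follows by taking suprema over the subintervals.

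Next, I would insert the assumed bounds $\|g\|_\infty\le c_0$ and $\|g^{(k)}\|_\infty\le c_k$ into \eqref{plan-LK} to obtain
$$
\|g^{(m)}\|_\infty\le K_{k,m}\bigl(c_0\,b^{-m}+c_k\,b^{k-m}\bigr).
$$
Here the specific hypotheses enter: from $b\le c_0^\ve$, $k\ve\ge 1$, and $c_0<1$ we get
$$
b^k\le c_0^{k\ve}\le c_0,
$$
and therefore $c_k b^{k-m}=c_k b^k\cdot b^{-m}\le c_k c_0 b^{-m}$. Combining these estimates yields
$$
\|g^{(m)}\|_\infty\le K_{k,m}(1+c_k)\,c_0\,b^{-m},
$$
which is precisely \eqref{aposteriori} with $\tilde C_m(\ve):=K_{k,m}(1+c_k)$. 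Since one may take $k$ to be any fixed integer $\ge 1/\ve$ (say $\lceil 1/\ve\rceil$), this constant depends only on $\ve$ and on $c_k$, hence only on $\ve$ and $c_1,\dots,c_k$, as required; the monotonicity in the $c_j$ is automatic from the above expression.

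The main obstacle is really only the verification of \eqref{plan-LK} with constants independent of $b$ and of $g$; once that interpolation estimate is in hand, the remaining content of the lemma is the elementary observation that $b^k\le c_0$, which is exactly what makes the top-derivative contribution $c_k b^{k-m}$ absorbable into a constant multiple of $c_0 b^{-m}$.
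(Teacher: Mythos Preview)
Your argument is correct and in fact conceptually the same as the paper's, but packaged differently. The paper rescales to the unit interval and then, rather than quoting the Landau--Kolmogorov/Gorny inequality, rederives the needed interpolation estimate by hand: it fixes the minimal integer $M\ge 1/\ve$, observes directly that $c_m b^m\le c_m c_0$ for $m\ge M$, and for $0\le m<M$ produces a point where $|h^{(m)}|$ is small by iterated application of the mean value theorem on a dyadic grid, then closes by downward induction via $\|h^{(m)}\|_\infty\le c_0 2^{(m+1)m}+\|h^{(m+1)}\|_\infty$. Your approach compresses all of this into the single inequality \eqref{plan-LK}, which is cleaner if one is willing to quote the finite-interval interpolation inequality; the paper's version is fully self-contained and elementary. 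One small point: your last paragraph is phrased as if $k$ were yours to choose, which it is not---$k$ is part of the hypothesis. This does not affect the argument, since your constant $K_{k,m}(1+c_k)$ depends only on $k$, $m$ and $c_k$, all of which are determined by the given list $c_1,\dots,c_k$; but if you want the constants for small $m$ to depend on $\ve$ rather than on $k$, you should apply \eqref{plan-LK} with top order $M:=\lceil 1/\ve\rceil\le k$ instead of $k$, and handle $m\ge M$ separately via $c_m b^m\le c_m b^M\le c_m c_0$, exactly as the paper does.
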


\begin{remark} In our later application, we shall have  $c_1,\dots ,c_k\sim 1$ and $c_0\ll 1$, so that  for $m\ge 1$ and $c_0$ sufficiently small  the estimates in \eqref{aposteriori} are stronger than the a priori estimates from \eqref{apriori}, at least when $m$ is not too large.
\end{remark} 
\begin{proof}
We scale by setting $h(x):= g(bx).$ Then, after translation, we may assume that $I=[0,1],$ and that
\begin{equation}\label{deriv1}
\|h^{(j)}\|_\infty\le c_jb^j,\quad  j=0, \dots, k,
\end{equation}
and what we need to show is that there are constants $\tilde C_m(\ve)\ge 0$ as above such that
\begin{equation}\label{deriv2}
\|h^{(m)}\|_\infty\le  c_0\, \tilde C_m(\ve), \qquad m=0, \dots, k.
\end{equation}
To this end, choose $M=M(\ve)\in\N$ minimal so that $M\ge 1/\ve.$ Then $M\le k.$ Assume $m\le k.$

a) If $m\ge M,$ then
$$
c_m b^m\le c_mb^M\le c_mc_0^{M\ve}\le c_m c_0,
$$
so we may choose $\tilde C_m(\ve):= C_m.$ Notice that in particular $\tilde C_M(\ve):=c_{M(\ve)}.$

\medskip
b) Assume next that $0\le m\le M=M(\ve).$  We know already that
\begin{eqnarray}\label{deriv3}
\|h\|_\infty&\le&  c_0,\\
\|h^{(M)}\|_\infty&\le& c_0\, \tilde C_M(\ve).\label{deriv4}
\end{eqnarray}

The estimates in \eqref{deriv2} for $0< m<M$  then follow from these two estimates by interpolation. Let us give an elementary argument for this claim.
Fix $m$ with $0\le m\le M-1.$

We first claim that  \eqref{deriv3} implies that for any $j=0,\dots,m$ there are points
$$
t^j_0<t^j_1<\cdots <t^j_{2^{m-j}-1}
$$
in $[0,1]$ such that $t^j_{i+1}-t^j_{i}\ge 2^{-m} $ and
\begin{equation}\label{deriv5}
|h^{(j)}(t^j_i)|\le c_0 2^{(m+1)j}
\end{equation}
for every $i.$
This is easily proved by induction on $j.$

If $j=0,$ we may choose $t^0_i:= i 2^{-m}, \ i=0,\dots, 2^m-1.$  And, assuming that the claim  holds for $j,$ by the induction hypothesis  and the mean value theorem we can find points $t^{j+1}_i\in (t^j_{2i}, t^j_{2i+1})$ so that
$$
|h^{(j+1)}(t^{j+1}_i)|=\Big|\frac{h^{(j)}(t^{j}_{2i+1})-h^{(j)}(t^{j}_{2i})}{t^j_{2i+1}-t^j_{2i}}\Big|\le \frac {2\cdot c_02^{(m+1)j}}{2^{-m}}=c_02^{(m+1)(j+1)}.
$$
Moreover, since
$$
t^{j+1}_i<t^j_{2i+1}<t^j_{2i+2}<t^{j+1}_{i+1},
$$
where  $t^j_{2i+2}-t^{j}_{2i+1}\ge 2^{-m},$ we also have that  $t^{j+1}_{i+1}-t^{j+1}_{i}\ge 2^{-m}.$

\medskip
In particular, for $j=m,$ we find a $t^m:=t^m_0$ so that $|h^{(m)}(t^m)|\le c_0 2^{(m+1)m}.$  Then, for any $t\in [0,1],$ we have
$$
|h^{(m)}(t)|\le |h^{(m)}(t^m)|+|h^{(m)}(t)-h^{(m)}(t^m)|\le c_0 2^{(m+1)m}+\|h^{(m+1)}\|_\infty\cdot 1,
$$
so that
\begin{equation}\label{deriv6}
\|h^{(m)}\|_\infty\le  c_0 2^{(m+1)m}+\|h^{(m+1)}\|_\infty.
\end{equation}
Now we can use ``downward induction'' on $m,$ starting with $m=M-1,$  to prove \eqref{deriv2}. Indeed, when $m=M-1,$ then by \eqref{deriv6} we have
$$
\|h^{(M-1)}\|_\infty\le  c_0 2^{M(M-1)}+\|h^{(M)}\|_\infty\le c_0 (2^{M(M-1)} +\tilde C_M(\ve))=: c_0 \tilde C_{M-1}(\ve)).
$$
Finally, we can  pass from $m$ to $m-1$  by means of our induction hypothesis on $m$ and \eqref{deriv6}:
$$
\|h^{(m-1)}\|_\infty\le   c_0 2^{m(m-1)}+\|h^{(m)}\|_\infty\le c_0 2^{m(m-1)}+c_0 \tilde C_{m}(\ve)=: c_0\tilde C_{m-1}(\ve).
$$
\end{proof}

%%%%%%%%%%%%%%%%%%%%%%%%%%%%%%

\setcounter{equation}{0}
\section{Geometric background on strong transversality}\label{geometry}
Assume that $\phi\in \Hyp^M(\Sigma), M\ge 3,$ and recall that $S$ is the graph of $\phi.$

\subsection{Strong transversality for bilinear estimates}

We  begin by  recalling some   facts about what kind of ``strong transversality'' is required for establishing suitable bilinear estimates.

Following \cite{lee05}, given two open subsets $U_1,U_2\subset \Sigma,$ we consider the quantity
\begin{equation}\label{transs}
\Gamma_{z}(z_1,z_2,z_1',z_2'):=	\left\langle
(D^2\phi(z))^{-1}(\nabla\phi(z_2)-\nabla\phi(z_1)),\nabla\phi(z_2')-\nabla\phi(z_1')\right\rangle
\end{equation}
for  $z_i=(x_i,y_i),\, z'_i=(x'_i,y'_i)\in U_i\, , i=1,2$, and $z=(x,y)\in U_1\cup
U_2.$ Bilinear estimates have constants depending  only
on upper bounds for the
derivatives of $\phi$ and on lower bounds of (the modulus of) \eqref{transs}.    As in \cite{bmvp20b}, for our estimates it will be enough to have lower bounds only for $z\in U_2$ (or only for $z\in U_1$). If $U_1$ and
$U_2$ are sufficiently small (with sizes depending on upper bounds of the first and second
order derivatives of $\phi$ and a lower bound for the Hessian determinant of $\phi$) this
condition  reduces to the estimate
\begin{equation}\label{Gammalow}
|\Gamma_{z}(z_1,z_2)|\geq c>0,
\end{equation}
for $z_i=(x_i,y_i)\in U_i$, $i=1,2$, $z=(x,y)\in U_2$, where
\begin{equation}\label{trans}
\Gamma_{z}(z_1,z_2):=	\left\langle
(D^2\phi(z))^{-1}(\nabla\phi(z_2)-\nabla\phi(z_1)),\nabla \phi(z_2)-\nabla\phi(z_1)\right\rangle.
\end{equation}
In contrast to \cite{bmv20}, \cite{bmvp19}, \cite{bmvp20a}, where we had to devise quite specific  ``admissible pairs'' of sets $U_1, U_2$ for our bilinear estimates, as in \cite{bmvp20b} we shall here only have to consider ``caps'' (cf. Subsection \ref{caps}) $\tau_1, \tau_2$ for $U_1,U_2,$ and the required bilinear estimates will be a of somewhat different nature. Nevertheless,  the  geometric transversality conditions that we need here will be the same.

We shall next exploit the hyperbolicity assumption on $S$ in order to gain a better understanding of $\Gamma_{z}(z_1,z_2,z_1',z_2')$ for such surfaces. In particular, we shall derive the ``hyperbolic factorization'' \eqref{factori} which will be of central importance.

\subsubsection{Null vectors for $D^2\phi$}\label{nullvectors}

Recall  that $\phi\in \Hyp^M(\Sigma), M\ge 3.$ We put $H:=\phi_{xy}^2-\phi_{xx}\phi_{yy},$ so that $-H$ is the Hessian determinant of $\phi.$
From \eqref{phiest} we easily deduce that $|H(z)-1|\le 10^{-4}$ for every $z\in\Sigma.$
\smallskip

It is easy to check that we  then explicitly have
\begin{eqnarray} \nonumber
-H(z)\Gamma_{z}(z_1,z_2)&=&\phi_{yy}(z)\big(\phi_x(z_2)-\phi_x(z_1)\big)^2 +\phi_{xx}(z)\big(\phi_y(z_2)-\phi_y(z_1)\big)^2 \\
& -&2\phi_{xy}(z)\big(\phi_x(z_2)-\phi_x(z_1)\big)\big(\phi_y(z_2)-\phi_y(z_1)\big).
	  \label{gammaz}
\end{eqnarray}

Let us further introduce the functions on $\Sigma$ defined by
\begin{eqnarray*}
A(z):=\frac {\phi_{yy}}{\phi_{xy}+\sqrt{H}}(z), \quad B(z):=\frac {\phi_{xx}}{\phi_{xy}+\sqrt{H}}(z).
\end{eqnarray*}

Note that
\begin{eqnarray}\label{onAB}
 \begin{split}
1+AB&=2\frac{\phi_{xy}}{\phi_{xy}+\sqrt{H}}, \ \ 1-AB=2\frac{\sqrt{H}}{\phi_{xy}+\sqrt{H}}, \\
 \frac {A}{1+AB}&=\frac{\phi_{yy}}{2\phi_{xy}},\  \quad \frac {B}{1+AB}=\frac{\phi_{xx}}{2\phi_{xy}},
\end{split}
\end{eqnarray}
and that \eqref{phiest} implies that
\begin{equation}\label{phiest2}
|\phi_{xy}+\sqrt{H}-2|\le 10^{-4}, \ |\phi_{xx}|, |\phi_{yy}|\le 10^{-4}\quad  \text{on} \ \Sigma,
\end{equation}
so that $|A(z)|, |B(z)|\le 10^{-3}.$
\medskip

$A$ and $B$ are in fact closely linked with the geometry of the surface $S.$ Indeed, consider the vectors $\om:= (-A(z),1)$ and $\nu:=(1,-B(z)).$ Then one checks easily that these two vectors form a {\it basis of  null vectors} of the Hessian matrix $D^2\phi(z),$  i.e., for every  $z\in\Sigma,$ we have

\begin{equation}\label{nullvector}
(-A(z),1) D^2\phi(z) \trans (-A(z),1)=0 \quad \text{and}\quad (1, -B(z)) D^2\phi(z) \trans (1,-B(z))=0.
\end{equation}

For fixed $z,$ let us therefore set
\begin{equation}\label{Tz}
T:= T_{z}:=\left(
\begin{array}{cc}
1  &   -A(z)   \\
  -B(z)   &  1
\end{array}
\right).
\end{equation}
Then clearly
\begin{eqnarray*}
(\xi_1,\xi_2) \trans T D^2\phi(z)\, T \, \trans (\eta_1,\eta_2)&=& (\xi_1\nu+\xi_2\om)D^2\phi(z) \trans (\eta_1\nu +\eta_2\om)\\
&=&q(z) (\xi_1\eta_2+\xi_2\eta_1),
\end{eqnarray*}
where $q(z):=\om D^2\phi(z) \trans \nu.$ This shows that
\begin{equation}\label{normalformbyT}
\trans TD^2\phi(z) T=q(z) \left(
\begin{array}{cc}
 0 &    1  \\
 1   &0
\end{array}
\right).
\end{equation}

Moreover, we have
\begin{eqnarray*}
q(z)=(-A(z),1) D^2\phi(z)\trans (1,-B(z))=\big(-A\phi_{xx}+(1+AB) \phi_{xy} -B\phi_{yy}\big)(z).
\end{eqnarray*}
And, by   our definitions of $A$ and $B,$   and \eqref{onAB}, we  easily see that
\begin{eqnarray*}
-A\phi_{xx}+(1+AB) \phi_{xy} -B\phi_{yy}=2 \frac {H}{\phi_{xy}+\sqrt{H}},
\end{eqnarray*}
so that
\begin{equation}\label{q}
q(z)=2 \frac {H}{\phi_{xy}+\sqrt{H}}(z).
\end{equation}
This implies in particular that  $|q(z)-1|\le 10^{-3}.$ Note also that by \eqref{onAB}\begin{equation}\label{jacobian}
\det T_{z}=1-A(z)B(z)= 2 \frac {\sqrt{H}}{\phi_{xy}+\sqrt{H}}(z)= \frac{q(z)}{\sqrt{H(z)}}\sim q(z)\sim 1.
\end{equation}

\subsubsection{Back to $\Gamma_{z}(z_1,z_2,z_1',z_2')$}\label{Gamma}

Observe next that \eqref{normalformbyT} implies that
$$
(D^2\phi(z))^{-1} =\frac 1{q(z)}T \left(
\begin{array}{cc}
 0 &    1  \\
 1   &0
\end{array}
\right)\trans T.
$$
Thus,
\begin{eqnarray}\nonumber
&&(\xi_1,\xi_2) ( D^2\phi(z))^{-1} \trans (\eta_1,\eta_2)\\
&&=\frac 1{q(z)}\Big[ (\xi_1-B(z) \xi_2)(\eta_2-A(z) \eta_1)+(\eta_1-B(z) \eta_2)(\xi_2-A(z) \xi_1)\Big].\label{factorinversHessian}
\end{eqnarray}
If we  accordingly define  the functions
\begin{eqnarray*}
t^1_{z}(z_1,z_2)&:=&\phi_x(z_2)-\phi_x(z_1)-B(z)(\phi_y(z_2)-\phi_y(z_1)),\\
t^2_{z}(z_1,z_2)&:=&\phi_y(z_2)-\phi_y(z_1)-A(z)(\phi_x(z_2)-\phi_x(z_1)),
\end{eqnarray*}
then the identity \eqref{factorinversHessian} shows that we may re-write
\begin{equation}\label{Gammanew}
\Gamma_{z}(z_1,z_2,z_1',z_2')=\frac 1{q(z)}\Big[ t^1_{z}(z_1,z_2)\cdot t^2_{z}(z'_1,z'_2)+t^1_{z}(z'_1,z'_2)\cdot t^2_{z}(z_1,z_2)\Big].
\end{equation}

In particular,  we obtain the following ``hyperbolic factorization'':
\begin{equation}\label{factori}
\Gamma_{z}(z_1,z_2)=\tfrac 2{q(z)}\cdot t^1_{z}(z_1,z_2)\cdot t^2_{z}(z_1,z_2),
\end{equation}
where the first factor $2/q(z)$ is of size 2, more precisely  $|2/q(z)-2|\le 10^{-3},$ so that it is irrelevant.

Note  also that, e.g.,
\begin{equation}\label{tdiff}
t^2_{z_1}(z_1,z_2)-t^2_{z_2}(z_1,z_2)=(A(z_1)-A(z_2))(\phi_x(z_2)-\phi_x(z_1)),
\end{equation}
and  that
 \begin{equation}\label{symtrans}
 	{t}^i_{z}(z_1,z_2)=-{ t}^i_{z}(z_2,z_1), \quad i=1,2.
\end{equation}

\medskip

\subsubsection{Caps and the basic decomposition of $S$}\label{caps}
\begin{defn}\label{strip-broad} 
Fix $K\gg 1$ to be a large dyadic number, and $\mu\ge 1$ real (the reasons for this notation will be clarified in Section \ref{proofthm}). 

Given  $K$ and $\mu,$ following   \cite{Gu16} we  shall consider a given covering of $\Sigma=[-1,1]\times[-1,1]$ by $K^2$ disjoint squares (called {\it caps}) $\tau$ of side length $\mu^{1/2}K^{-1},$ whose  centers are $K^{-1}$ separated. It can then happen that such a cap $\tau$  is no longer  contained in $\Sigma;$  in that case, we  truncate it by replacing it with its  intersection with $\Sigma.$
  Note that one usually envisions  caps to be subsets of the hypersurface $S;$ for our purposes, however,  it is more convenient  to work with caps $\tau\subset\Sigma,$ which then can be identified with the corresponding caps $\{(z,\phi(z)): z\in \tau\}$ on $S.$
 
Observe that for  $\mu=1,$ this includes in particular  the case of the  covering of $\Sigma$ by caps  $\tau$ which are pairwise disjoint in measure - this is what we had called the   {\it basic decomposition of $S$ into caps} in \cite{bmvp20b}. If $f$ is a given function on $\Sigma,$ we had then defined $ f_\tau:=f\chi_\tau.$  

For general $\mu\ge 1,$ motivated by Guth's inductive argument in  \cite{Gu16}, we shall, however, only assume that $f_\tau$ is a function such that $\supp f_\tau\subset \tau.$  Actually, Guth assumes more generally that $\tau$ is a cap of side length $r_\tau$ with $K^{-1}\le r_\tau\le \mu^{1/2} K^{-1},$ but since we are only assuming that $f_\tau$ is supported in $\tau,$ we can then as well replace $\tau$  by a larger cap of side length $\mu^{1/2}K^{-1},$ as we did.

\end{defn}

Assume now that $\tau_1\ne \tau_2$ are two different caps, with centers $z^c_1=(x^c_1,y^c_1),$  respectively  $z^c_2=(x^c_2,y^c_2),$  of side length $\mu^{1/2} K^{-1}.$ 
The previous  definition of  strong transversality  motivates the following
\begin{defn}
{\rm
We say that $\tau_1\ne \tau_2$ are {\it strongly separated} if
$$
\max\{\min\{|t^1_{z^c_1}(z^c_1,z^c_2)|,
|t^2_{z^c_1}(z^c_1,z^c_2)|\},\min\{|t^1_{z^c_2}(z^c_1,z^c_2)|,|t^2_{z^c_2}(z^c_1,z^c_2)|\}\}\ge 50\mu^{1/2}K^{-1}.
$$}
\end{defn}

We shall distinguish between the cases where $|y^c_2-y^c_1|\ge|x^c_2-x^c_1|,$ and where $|x^c_2-x^c_1|\ge|y^c_2-y^c_1|.$ Let us mostly concentrate on the first case; the other case can be treated in the same way be interchanging the roles of $x$ and $y.$  So,  for the rest of this section, let us make the following
\medskip

\noindent\bf Assumption 1. \label{assumption1}\rm Assume  that $|y^c_2-y^c_1|\ge|x^c_2-x^c_1|.$

\smallskip
\begin{remark}\label{Gammasize}
If the caps $\tau_1$ and $\tau_2$ are strongly separated, so that, say, $|t^1_{z^c_2}(z^c_1,z^c_2)|\ge 50\mu^{1/2}K^{-1}$  and
$|t^2_{z^c_2}(z^c_1,z^c_2)|\ge 50\mu^{1/2}K^{-1},$ then
\begin{equation}\label{Gammavar}
|\Gamma_{z}(z_1,z_2,z_1',z_2')|\ge 4 \mu K^{-2} \quad \text{for all} \quad z_1,z'_1\in \tau_1,  \, z, z_2,z'_2\in \tau_2.
\end{equation}
\end{remark}
This result generalizes  the corresponding result in Remark 4.8 of \cite{bmvp20b}, whose proof easily extends to our present situation by means of the identity \eqref{Gammanew}. It will allow us  to establish favorable bilinear estimates later on for the contributions by the tangential terms associated to the cells arising  in Guth's cell decomposition.

\subsection{Not strongly separated caps}\label{notstrong}

Assume now that $\tau_1$ and $\tau_2$ are not strongly separated.

\medskip
{\bf  Case A.} Assume that $|y^c_2-y^c_1|\le  100 \mu^{1/2}K^{-1}.$
Then, by Assumption 1, also  $|x^c_2-x^c_1|\le  100\mu^{1/2}K^{-1}.$  Both caps  are then contained in a cap of slightly bigger size  
$100\mu^{1/2}K^{-1}\le 100\mu^{1/2}K^{-1/4}.$ \smallskip
\medskip

Let us therefore assume from here on that  $|y^c_2-y^c_1|>  100\mu^{1/2}K^{-1}.$

Observe that our assumptions on  $\phi$  in combination with Assumption 1 then easily imply that $|t^1_{z}(z_1,z_2)|\sim |y_2-y_1|$ for every $z_1\in\tau_1, z_2\in\tau_2$ and $z\in \tau_1\cup \tau_2,$  and thus we may assume that
$$
\min\{|y^c_2-y^c_1|,|t^2_{z^c_1}(z^c_1,z^c_2)|\}\le  100\mu^{1/2}K^{-1}\text{ and } \min\{|y^c_2-y^c_2|,|t^2_{z^c_2}(z^c_1,z^c_2)|\}\le  100\mu^{1/2}K^{-1}.
$$
In particular, we have 
\begin{equation}\label{tbounds}
|t^2_{z^c_1}(z^c_1,z^c_2)| \le  100\mu^{1/2}K^{-1} \text{ and } |t^2_{z^c_2}(z^c_1,z^c_2)|\le  100\mu^{1/2}K^{-1}.
\end{equation}

Note also that by \eqref{tdiff} 
\begin{equation}\label{factorcenter}
|t^2_{z^c_1}(z^c_1,z^c_2)-t^2_{z^c_2}(z^c_1,z^c_2)|\sim|A(z^c_1)-A(z^c_2)||y^c_2-y^c_1|,
\end{equation}
with constants very close to $1.$

We shall therefore distinguish two further cases.

\smallskip

{\bf  Case B.} Assume that $|y^c_2-y^c_1|>  100 \mu^{1/2}K^{-1}$ and  $|A(z^c_1)-A(z^c_2)|> \mu^{1/2}K^{-3/4}.$

Then, by  \eqref{tbounds} and \eqref{factorcenter},  $|y^c_2-y^c_1|\le 300 K^{-1/4}\le 300 \mu^{1/2}K^{-1/4},$ and arguing as before we see that both caps are contained in a cap of size   $400 \mu^{1/2}K^{-1/4}.$
\smallskip

 This leaves us with the case where $|y^c_2-y^c_1|>  100 \mu^{1/2}K^{-1}$ and  $|A(z^c_1)-A(z^c_2)|\le  \mu^{1/2}K^{-3/4}.$  Actually, in what follows, we shall  not really make use of the condition $|y^c_2-y^c_1|>  100 \mu^{1/2}K^{-1}$  and shall therefore henceforth concentrate on 
\smallskip

{\bf  Case C.} Assume that 
\begin{equation}\label{nonsepcase}
|t^2_{z^c_1}(z^c_1,z^c_2)|\}\le  100\mu^{1/2}K^{-1} \quad \text{and} \quad |A(z^c_1)-A(z^c_2)|\le  \mu^{1/2}K^{-3/4},\ .
\end{equation}

\smallskip

\noindent\bf Notation. \rm We fix a point $z_1$ (which would be the point $z^c_1$ in Case C), and set $A_1:=A(z_1).$ Then, we define
\begin{eqnarray}\label{curves}
R_I&:=&\{z\in\Sigma: |t_{z_1}^2 (z_1,z)|\le 100 \mu^{1/2}K^{-1}\},\\
 R_{II}&:=&\{ z\in\Sigma: |A(z)-A_1|\le \mu^{1/2}K^{-3/4}\}.
\end{eqnarray}

\subsubsection{Level curves of $t_{z_1}^2 (z_1,\cdot)$} Let us fix $z_1\in \Sigma,$ and let us abbreviate ${\bf t}_{z_1}(x,y):= t_{z_1}^2 (z_1,(x,y)).$  From our definition of ${\bf t}_{z_1}$  we compute that
\begin{eqnarray}\label{nablat2}
\nabla {\bf t}_{z_1}(z)&=&(\phi_{xy}-A_1\phi_{xx}, \phi_{yy}-A_1 \phi_{xy})(z)=(1,0)+O(10^{-5}).
\end{eqnarray}

\begin{lemma}\label{curvesI}
Let $\al:=\min\limits_{z\in\Sigma} t_{z_1}^2 (z_1,z), \, \beta:=\max\limits_{z\in\Sigma} t_{z_1}^2 (z_1,z).$ There exists a  $C^M$-function
$h:[\al, \beta]\times[-1,1]\to \R$ such that  the  curves $\ga_{I,v}(y):=(h(v,y),y), y\in [-1,1],$ with $v\in [\al, \beta],$ are level curves of the function
$t_{z_1}^2 (z_1,\cdot)$ which fibre the set $\Sigma_{z_1}:=\{(h(v,y),y):[(v,y)\in[\al, \beta]\times[-1,1]\}$ into (``almost vertical'') curves, and $\Sigma\subset \Sigma_{z_1}
\subset 2\Sigma.$ Moreover, the mapping $\mathcal H:[\al, \beta]\times[-1,1]\to \Sigma_{z_1}, (v,y)\mapsto  (h(v,y),y),$ is a $C^M$-diffeomorphism, and more precisely we have that
  $h_y(y,v)=O(10^{-4}), \  h_v(y,v)=1+O(10^{-4}).$

\end{lemma}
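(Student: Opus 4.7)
The content of the lemma is a routine global implicit function theorem, made possible by the fact that $\mathbf{t}_{z_1}$ is a small $C^M$-perturbation of the linear function $z\mapsto x$ on $2\Sigma$. Concretely, by \eqref{phiest} together with the bound $|A_1|\le 10^{-3}$, one has on all of $2\Sigma$ the estimates $\partial_x\mathbf{t}_{z_1}(z) = \phi_{xy}(z)-A_1\phi_{xx}(z) = 1+O(10^{-4})$ and $\partial_y\mathbf{t}_{z_1}(z) = \phi_{yy}(z)-A_1\phi_{xy}(z) = O(10^{-4})$. I would proceed in four short steps.

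\emph{Step 1 (slicewise inversion).} Fix $y\in[-1,1]$. Since $\partial_x\mathbf{t}_{z_1}(\cdot,y)$ is bounded below by $1-O(10^{-4})>0$ on $[-2,2]$, the map $x\mapsto\mathbf{t}_{z_1}(x,y)$ is a strictly increasing $C^M$-diffeomorphism from $[-2,2]$ onto a closed interval $J_y$. Let $h(\cdot,y):J_y\to[-2,2]$ denote its inverse.

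\emph{Step 2 (uniform range).} The key claim is $[\alpha,\beta]\subset J_y$ for every $y\in[-1,1]$. Pick $(x_\alpha,y_\alpha)\in\Sigma$ with $\mathbf{t}_{z_1}(x_\alpha,y_\alpha)=\alpha$. Since $|y-y_\alpha|\le 2$ and $\partial_y\mathbf{t}_{z_1}=O(10^{-4})$, one has $\mathbf{t}_{z_1}(x_\alpha,y)=\alpha+O(10^{-4})$. Because $\partial_x\mathbf{t}_{z_1}\sim 1$, the value $\alpha$ is attained by $\mathbf{t}_{z_1}(\cdot,y)$ at some $x$ with $|x-x_\alpha|=O(10^{-4})$, and since $x_\alpha\in[-1,1]$ this $x$ lies comfortably in $[-2,2]$; thus $\alpha\in J_y$. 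The symmetric argument gives $\beta\in J_y$, so $h$ is defined on $[\alpha,\beta]\times[-1,1]$ with values in $[-2,2]$.

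\emph{Step 3 (inclusions and diffeomorphism).} By construction $(h(v,y),y)\in[-2,2]\times[-1,1]\subset 2\Sigma$, giving $\Sigma_{z_1}\subset 2\Sigma$. Conversely, if $(x,y)\in\Sigma$, set $v:=\mathbf{t}_{z_1}(x,y)\in[\alpha,\beta]$; by the uniqueness in Step 1, $h(v,y)=x$, so $(x,y)\in\Sigma_{z_1}$, and hence $\Sigma\subset\Sigma_{z_1}$. The inverse of $\mathcal{H}$ is the $C^M$ map $\Sigma_{z_1}\to[\alpha,\beta]\times[-1,1]$, $(x,y)\mapsto(\mathbf{t}_{z_1}(x,y),y)$, whose Jacobian determinant equals $\partial_x\mathbf{t}_{z_1}(x,y)\ne 0$; hence $\mathcal{H}$ is a $C^M$-diffeomorphism. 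Implicit differentiation of the defining identity $\mathbf{t}_{z_1}(h(v,y),y)=v$ finally yields
\[
h_v(v,y)=\frac{1}{\partial_x\mathbf{t}_{z_1}(h(v,y),y)}=1+O(10^{-4}),\qquad h_y(v,y)=-\frac{\partial_y\mathbf{t}_{z_1}(h(v,y),y)}{\partial_x\mathbf{t}_{z_1}(h(v,y),y)}=O(10^{-4}).
\]
The only step where any care is needed is Step 2 — checking that the extrema $\alpha,\beta$, a priori only attained over $\Sigma$, actually lie in every fibre image $J_y$ for $y\in[-1,1]$ — but the $O(10^{-4})$ shift is dwarfed by the unit buffer between $[-1,1]$ and $[-2,2]$, so this causes no difficulty.
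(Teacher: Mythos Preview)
Your proof is correct and follows essentially the same approach as the paper: both rely on the key observation that $\nabla\mathbf{t}_{z_1}=(1,0)+O(10^{-4})$ and recover the derivative estimates for $h$ by implicit differentiation of $\mathbf{t}_{z_1}(h(v,y),y)=v$. The paper invokes the inverse function theorem on the $2$D map $\mathcal{G}:(x,y)\mapsto(\mathbf{t}_{z_1}(x,y),y)$ and speaks of ``patching together local inverse functions'', whereas your slicewise one-variable inversion together with Step~2 is a bit more explicit about why $h$ is globally defined on $[\alpha,\beta]\times[-1,1]$; this is a presentational rather than a substantive difference.
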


\begin{proof}
Consider the mapping $\mathcal G: (x,y)\mapsto ({\bf t}_{z_1}(x,y), y).$ Then, by \eqref{nablat2}
$$D\mathcal  G(x,y)=\left(
\begin{array}{cc}
 \phi_{xy}-A_1\phi_{xx} &  \phi_{yy}-A_1 \phi_{xy})    \\
 0 &  1
\end{array}
\right)(z)
=\left( \begin{array}{cc}
1& 0   \\
 0 &  1
\end{array}
\right)+O(10^{-5}).
$$
Therefore, the results follows in a straight-forward manner from the inverse function theorem, by patching together local inverse functions. Note that the inverse function $\mathcal  H$ to $\mathcal G$ must be of the form $\mathcal H(v,y)=(h(v,y),y),$ and that $t_{z_1}^2 (z_1,(h(y,v),y))={\bf t}_{z_1}(h(y,v),y)=v.$ This also implies that
\begin{equation}\label{nablah}
0=\pa_x{\bf t}_{z_1}\cdot h_y+\pa_y{\bf t}_{z_1}; \qquad 1=\pa_x{\bf t}_{z_1}\cdot h_v,
\end{equation}
so that, in view of \eqref{nablat2},  $h_y(y,v)=O(10^{-4})$ and $h_v(y,v)=1+O(10^{-4}).$
\end{proof}

Note that this result also implies that horizontal sections of  $R_I$ have length $O(\mu^{1/2}K^{-1}).$
\smallskip

\subsubsection{On the direction of level curves of $t_{z_1}^2 (z_1,\cdot)$ within $R_I\cap R_{II}$ }

The following lemma gives us an important geometric information.
\begin{lemma}\label{anglestable} The set $R_I$ fibers into the level curves $\ga_{I,v}(y):=(h(v,y),y), y\in [-1,1],$ with $|v|\le100 \mu^{1/2}K^{-1}.$  If
$z=(h(v,y),y)$ lies on such a curve, denote by  $X_{z}:=(h_y(v,y),1)$  the corresponding tangent vector at the point $z.$ Then, if $z$ lies also in $R_{II},$  i.e., if $z\in R_I\cap R_{II},$ we have that
$$
|X_z-(-A_1,1)|\le 3\mu^{1/2} K^{-3/4}.
$$
Thus, up to an error of order $O(\mu^{1/2}K^{-3/4}),$ for all points $z$ in $ R_I\cap R_{II}$ the tangent vectors to the level curves of $t_{z_1}^2 (z_1,\cdot)$ point in the same direction given by $(-A_1,1).$
\end{lemma}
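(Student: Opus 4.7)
The goal is to control $|X_z - (-A_1, 1)| = |h_y(v,y) + A_1|$ at any $z = (h(v,y), y) \in R_I \cap R_{II}$. The plan is to first express $h_y$ explicitly in terms of the second derivatives of $\phi$, then exploit the null-vector identity \eqref{nullvector} to see that $h_y = -A_1$ holds \emph{exactly} whenever $A(z) = A_1$, and finally quantify the deviation using the hypothesis $|A(z) - A_1| \le \mu^{1/2}K^{-3/4}$ coming from $R_{II}$.

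First, I would extract a closed form for $h_y$ from the identities in \eqref{nablah}: combining $0 = \partial_x {\bf t}_{z_1}\cdot h_y + \partial_y {\bf t}_{z_1}$ with the formula for $\nabla {\bf t}_{z_1}$ in \eqref{nablat2} gives
$$h_y(v,y) = -\frac{\phi_{yy}(z) - A_1\phi_{xy}(z)}{\phi_{xy}(z) - A_1\phi_{xx}(z)}.$$
Next, I would use the null-vector identity: expanding $(-A(z),1)\, D^2\phi(z)\trans(-A(z),1) = 0$ from \eqref{nullvector} yields
$$\phi_{yy}(z) = 2A(z)\phi_{xy}(z) - A(z)^2\phi_{xx}(z).$$
Writing $A(z) = A_1 + \delta$, substituting, and collecting terms produces the clean identity
$$h_y(v,y) + A_1 = -\frac{2\delta\,\phi_{xy}(z) - (2A_1\delta + \delta^2)\phi_{xx}(z)}{\phi_{xy}(z) - A_1\phi_{xx}(z)}.$$
In particular, $h_y + A_1$ vanishes precisely when $A(z) = A_1$, which is the structural reason for the lemma. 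On $R_{II}$ we have $|\delta| \le \mu^{1/2}K^{-3/4}$, so it remains only to estimate the quotient.

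Finally, I would invoke the a priori bounds \eqref{phiest} ($|\phi_{xx}|, |\phi_{yy}|, |\phi_{xy}-1| \le 2\cdot 10^{-5}$) together with $|A_1|\le 10^{-3}$ (which follows from \eqref{phiest2}). The denominator then lies in $[1 - 3\cdot 10^{-5}, 1 + 3\cdot 10^{-5}]$, while the numerator equals $2\delta$ up to a relative error of order $10^{-4}$ (coming from $\phi_{xy}\approx 1$, $|\phi_{xx}|\ll 1$, and $|\delta|\ll 1$). Dividing, the absolute value of $h_y+A_1$ is bounded by $2|\delta|(1+O(10^{-4})) \le 3\mu^{1/2}K^{-3/4}$, as required.

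The argument is essentially algebraic and presents no real obstacle: the conceptual content is already packaged in the fact that $(-A(z),1)$ is a null direction of $D^2\phi(z)$, so that the level curves of $t^2_{z_1}(z_1,\cdot)$ must be tangent to the null line determined by $A(z)$. The only care-demanding point is book-keeping the small universal constants from \eqref{phiest} tightly enough to land inside the factor $3$ on the right-hand side rather than a larger multiple.
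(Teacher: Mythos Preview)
Your proof is correct and follows essentially the same route as the paper's. Both arguments derive the same closed form for $h_y$ from \eqref{nablah}--\eqref{nablat2} and then exploit the null-vector identity for $(-A(z),1)$; the only cosmetic difference is that the paper first records the identity $-\dfrac{\phi_{yy}-A\phi_{xy}}{\phi_{xy}-A\phi_{xx}}=-A$ (equation \eqref{Arelation}) and then bounds $|h_y-(-A(z))|$ by a Lipschitz argument before applying the triangle inequality, whereas you substitute $\phi_{yy}=2A\phi_{xy}-A^2\phi_{xx}$ directly into $h_y+A_1$ and factor out $\delta=A(z)-A_1$ explicitly. Your explicit computation in fact makes the constant-tracking step that the paper leaves implicit more transparent.
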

\begin{proof}
By Lemma \ref{curvesI} the set $R_I$ fibers into the level curves $\ga_{I,v}(y):=(h(v,y),y), y\in [-1,1],$ with $|v|\le100 \mu^{1/2}K^{-1}.$  Fix any such $v.$ Then $X_{z}=(h_y(v,y),1)$ is a tangent vector of length $1+O(10^{-4})$ to the corresponding curve,   if $z:=(h(v,y),y).$ Note that, by  \eqref{nablah} and \eqref{nablat2},
$$
h_y(v,y)=-\frac{\pa_y{\bf t}_{z_1}}{\pa_x{\bf t}_{z_1}}(z)=-\frac{\phi_{yy}(z)-A_1 \phi_{xy}(z)}{\phi_{xy}(z)-A_1\phi_{xx} (z)}.
$$
Let us compare this  quantity with the one where $A_1$ is replaced by $A(z),$ i.e.,  with
$$
-\frac{\phi_{yy}(z)-A(z) \phi_{xy}(z)}{\phi_{xy}(z)-A(z)\phi_{xx} (z)}.
$$
Our definition of $A(z)$ implies that $\phi_{yy}-A \phi_{xy}=A\sqrt{H}$  and  that
$$
\phi_{xy}(z)-A\phi_{xx} =\frac {H+\phi_{xy} \sqrt{H}}{\phi_{xy}+\sqrt{H}}=\sqrt{H},
$$
so that
\begin{equation}\label{Arelation}
-\frac{\phi_{yy}(z)-A \phi_{xy}(z)}{\phi_{xy}(z)-A\phi_{xx} (z)}=-A(z).
\end{equation}
Therefore, if $z=(h(v,y),y)\in R_{II},$ i.e., if $|A(z)-A_1|\le \mu^{1/2}K^{-3/4}$, by \eqref{normal1}, \eqref{regularity}, we see that $|h_y(v,y)-(-A(z))|\le 2 \mu^{1/2} K^{-3/4},$ hence $|h_y(v,y)-(-A_1)|\le 3 \mu^{1/2}K^{-3/4},$ if $K$ is supposed to be sufficiently large. This implies that $|X_z-(-A_1,1)|\le \mu^{1/2}3K^{-3/4}.$
\end{proof}

\subsection{Moving rectangular boxes into vertical position at the origin}\label{movestrip}
 Suppose that $I$ is a subinterval of $[-1,1]$ of length $b:=|I|$  so that for any $y\in I$ there is some $x_y$  so that the point $z_y:=(x_y,y)$ lies in $R_I\cap R_{II}.$ Lemma \ref{anglestable} then shows that the set $(R_I\cap R_{II})\cap ([-1,1]\times I)$ is  essentially contained in a rectangular box  $L$ of dimension $100 \mu^{1/2}K^{-3/4}\times b,$ pointing in the direction of the vector
$\om:=(-A_1,1).$  Moreover, up to an error of order $O(\mu^{1/2}K^{-3/4}),$  we may replace $A_1$ by  $A(z),$  for any choice of point $z\in L.$

\medskip
Indeed, in Section \ref{broad}, based on Lemma \ref{anglestable} and Corollary  \ref{sublevelset}, we shall devise in a systematic way such kind of rectangular boxes $L,$ whose lengths will in addition satisfy the following condition:
\begin{equation}\label{boxlength}
100 \mu^{1/2}K^{-3/4}\le b\le K^{-\ve'},
\end{equation}
where $\ve'\in(0,\ve)$ will be a fixed, but sufficiently small constant to be  defined later. Moreover,   in view of Remark \ref{rem4.1}, we may also assume 
that $\mu\leq K^{\ve},$  so that $100 \mu^{1/2}K^{-3/4}\leq K^{-\ve} \leq K^{-\ve'}$.

\medskip
Here comes another {\bf crucial observation}: let $z=z_y$ for $y\in I.$ Then, by \eqref{nullvector}, we know that $(-A(z),1) D^2\phi(z) \trans (-A(z),1)=0.$ But, since $z\in R_{II},$ we also have that $|A(z)-A_1|\le \mu^{1/2} K^{-3/4}, $ and thus $\om D^2\phi(z_y) \trans\om= (-A_1,1) D^2\phi(z_y) \trans (-A_1,1)=O(\mu^{1/2}K^{-3/4}).$ And, since the box $L$ is of horizontal width $100 \mu^{1/2}K^{-3/4},$ the same estimate holds throughout $L.$ Consequently, we see that we may assume (with $\om=(-A_1,1)$) that, say,
\begin{equation}\label{nablanulldir}
|\left\langle \om,\nabla   \right\rangle^2 \phi(z)|\le C \mu^{1/2}K^{-3/4} \qquad \text{for all} \ z\in L.
\end{equation}

Moreover, by our assumptions on $\phi,$ clearly we also have
\begin{equation}\label{nablanulldirm}
|\left\langle \om,\nabla  \right\rangle^m \phi(z)|\le c_m  \qquad \text{for all} \ z\in L,
\end{equation}
for all $m=3, \dots,M,$ with constants $c_m\lesssim 10^{-5}.$  Restricting these estimates to lines parallel to $\R \om,$  and applying Lemma \ref{derivinterpolate} to $\left\langle \om,\nabla   \right\rangle^2 \phi$ along these lines, we see that these two estimates imply that
\begin{equation}\label{derivcontrol}
|\left\langle \om,\nabla \right\rangle^m   \phi(z)|\le \mu^{1/2}\tilde C_m(\ve) K^{-3/4} b^{2-m}\qquad \text{for all} \ z\in L, \,m=2, \dots, M.
\end{equation}

Let us now denote by  $z_0=(x_0,y_0)$ the center of our rectangular box $L.$  For simplicity, we may and shall assume  that $A_1=A(z_0).$
\smallskip

Our goal will be to find an affine-linear  transformation
$z=z_0+T\tilde z$ so that for the accordingly transformed function
$\tilde\phi(\tilde z):=\phi(z_0+T\tilde z)$ we have that
\begin{equation}\nonumber%\label{againnormal}
D^2\tilde \phi(0)=q(z_0) \left(
\begin{array}{cc}
 0 &    1  \\
 1   &0
\end{array}
\right),
\end{equation}
where $q(z_0)\in\R.$  Note that in the coordinates $\tilde z,$ the point $z_0$ then corresponds to $\tilde z_0=0.$
\smallskip

To this end, let us put $B_1:=B(z_0),$ and choose for $T$ the matrix $T_{z_0}$ defined by \eqref{Tz}, i.e.,
$$
T:= T_{z_0}:=\left(
\begin{array}{cc}
1  &   -A_1   \\
  -B_1   &  1
\end{array}
\right).
$$
Then, by  \eqref{normalformbyT}, we have indeed that
$$
D^2\tilde\phi(0)=\trans TD^2\phi(z_0) T=q(z_0) \left(
\begin{array}{cc}
 0 &    1  \\
 1   &0
\end{array}
\right),
$$ with $q(z)$ defined as in Subsection \ref{nullvectors}.
Note also that by \eqref{jacobian} the Jacobian determinant of our change of coordinates is given by
$$
\det T_{z_0}=\frac{q(z_0)}{\sqrt{H(z_0)} }\sim q(z_0)\sim1.
$$

In the new affine-linear coordinates $\tilde z,$ denote quantities like $A$, $L,$ etc., by $\tilde A,$ $\tilde L,$ etc.. Then $\tilde A(\tilde z_0)=\tilde A(0)=0,$ so that  $\tilde \om=(0,1).$
This corresponds to the following observation: we have
$\tilde\phi(\tilde x,\tilde y)=\phi(x_0+\tilde x-A_1\tilde y,y_0-B_1\tilde x+\tilde y),$  so that $\frac{\pa}{\pa \tilde y}\tilde \phi(\tilde z)=(\left\langle \om,\nabla  \right\rangle \phi)(z_0+T\tilde z).$ This shows that indeed the
 directional derivative $\left\langle \om,\nabla  \right\rangle$ corresponds to the partial derivative with respect to $\tilde y$ in the coordinates $\tilde z=(\tilde x,\tilde y).$  Thus, by \eqref{derivcontrol}, we have that
\begin{equation}\label{derivcontroltilde}
|\pa_{\tilde y}^m\tilde \phi(\tilde z)|\le \tilde C_m(\ve) \mu^{1/2}K^{-3/4} b^{2-m}\qquad \text{for all} \ \tilde z\in \tilde L, \,m=2, \dots, M,
\end{equation}
if $\tilde L$ corresponds to $L$ in the $\tilde z$-coordinates, i.e., $\tilde L=T^{-1}(-z_0+L).$ Note that $\tilde L$ is essentially again a rectangular box of dimension  $100 \mu^{1/2}K^{-3/4}\times b,$ but centered at the origin and  vertical, so that we may assume that $\tilde L$ is contained in $\tilde {\tilde L}:=[-100 \mu^{1/2}K^{-3/4},100 \mu^{1/2}K^{-3/4}]\times [-b,b].$ We  may and shall assume that \eqref{derivcontroltilde} holds even on $\tilde {\tilde L}.$

From \eqref{regularity}, we also get the following estimates on the box $\tilde {\tilde L}:$

\begin{equation}\label{regularitytilde}
\|\pa_{\tilde x}^\al\pa_{\tilde y}^\beta\tilde \phi\|_\infty \le C_m 10^{-5} \quad \text {for} \quad 3\le \al+\beta\le M,
\end{equation}
for constants $C_m>0$ which may possibly be  much bigger than $1$ if $m$ is very large.

Finally we note  that by replacing $\tilde \phi$ with $q(z_0)^{-1} \tilde \phi,$ and subtracting the first order Taylor polynomial  at the origin from $\tilde \phi,$ we may  even assume that
\begin{equation}\label{normal1tilde}
\tilde\phi(0)=0, \ \nabla\tilde\phi(0)=0, \
D^2\tilde\phi(0)=\left(
\begin{array}{ccc}
 0 &   1  \\
 1 &  0   \\
\end{array}
\right).\\
\end{equation}

We remark that our affine-linear coordinate change for passing to the $\tilde z$-coordinates, as well as  the further adjustments to $\tilde \phi$ that we just have explained, have no essential effect on the associated Fourier extension estimates, except that the operator norms may  be bigger by a factor  $C\sim 1.$

Last, but not least, observe also that we may assume that the same type of estimates \eqref{derivcontrol} and \eqref{regularitytilde} will also hold on the double $2\tilde L$ of $\tilde L,$ with constants $\tilde C_m(\ve)$ respectively $C_m$ that  possibly increase  by yet  another factor $C\sim 1.$

\medskip
In the next subsection we shall show how our previous results allow also the carry out later the induction on scales step.

\subsection{The induction on scales step}\label{indu}

For this subsection, we may and shall assume that  $\mu=1.$ Moreover, to simplify the subsequent discussions, let us drop the factor $100 $ from the horizontal length of our box $\tilde L,$ i.e., let us assume that $\tilde L$   is contained in  $\tilde {\tilde L}$ given by $[-K^{-3/4},K^{-3/4}]\times [-b,b],$ and let us correspondingly drop this factor also from \eqref{boxlength}.
\smallskip

In a second step, let us then  scale the $\tilde z$-coordinates by writing $\tilde x=K^{-3/4} x', \ \tilde y=by',$  $z'=(x',y'),$  and
$$
\phi^s(z'):=\frac{ K^{3/4}}{b}\tilde \phi(K^{-3/4} x', by').
$$
Note that in the new coordinates $z',$  $\tilde {\tilde L}$ corresponds to our standard square $\Sigma.$
Then, on $\Sigma,$ we have
$$
\pa_{x'}^\al\pa_{y'}^\beta \phi^s(x',y')=(K^{-3/4})^{\al-1} b^{\be-1} \pa_{\tilde x}^\al\pa_{\tilde y}^\beta\tilde \phi(K^{-3/4} x', by'),
$$
so that in view of  \eqref{normal1tilde}
\begin{equation}\label{normal1s}
\phi^s(0)=0, \ \nabla\phi^s(0)=0, \
D^2\phi^s(0)=\left(
\begin{array}{ccc}
 0 &   1  \\
 1 &  0   \\
\end{array}
\right).\\
\end{equation}

And, if $\al\ge 1$ and  $\alpha+\beta\geq 3$, by  \eqref{regularitytilde} and \eqref{boxlength}, we have 
\begin{equation}\nonumber%label{regularitys}
\|\pa_{x'}^\al\pa_{y'}^\beta \phi^s\|_\infty 
\le (K^{-3/4})^{\al-1} b^{\beta-1} C_m 10^{-5}
\le  b^{\al-1} b^{\be-1} C_m 10^{-5}
\le C_m 10^{-5} b \le 10^{-5}.
\end{equation}
For the remaining case $\al=0$, we use the improved estimate \eqref{derivcontrol},  which implies that 

\begin{equation}\nonumber
\|\pa_{y'}^\beta \phi^s\|_\infty 
\le (K^{-3/4})^{-1} b^{\be-1} \tilde C_\beta(\ve) K^{-3/4} b^{2-\beta} 
= \tilde C_\beta(\ve) b \le 10^{-5},
\end{equation}
since $b\le K^{-\ve'}$ and we can choose $K$ sufficiently large.

We thus see that
$$
\|\pa_{x'}^\al\pa_{y'}^\beta \phi^s\|_\infty \le 10^{-5} \quad \text {for} \quad 3\le |\al|+|\beta|\le M,
$$
if we assume $K\gg 1$ to be sufficiently large.

\medskip
Actually, if we denote by $2L$ the doubling of $L$ which keeps the center of $L$ fixed,  we may even assume that the estimates \eqref{nablanulldirm}, \eqref{derivcontrol} hold true on $2L,$ with constants bigger by some fixed factor only, and so the same arguments used before show that we may even assume that $\phi^s$ is defined on $2\Sigma,$ and that the previous estimates hold true even on $2\Sigma.$

\smallskip
Thus we see that the function $\phi^s$ lies again in $\Hyp^M(\Sigma).$

\subsubsection{Final rescaling step  in the induction on scales argument}

Recall that we assume  that $\mu=1.$ Explicitly, our construction of $\phi^s$ shows that
\begin{eqnarray}\nonumber
\phi^s(x',y')&=&\frac 1{ q(z_0)}bK^{-3/4} \Big[ \phi(x_0+K^{-3/4} x'-A_1by', y_0-B_1K^{-3/4}x'+by')\\
&-&\phi_x(z_0)(K^{-3/4}x'-A_1by') -\phi_y(z_0) (-B_1K^{-3/4}x'+by') \Big] +\text{constant}.\label{phisexpli}
\end{eqnarray}
Thus, if $f_L:=f\chi_L,$ and if $f^L$ denotes the corresponding function in the $z'$-coordinates, then changing coordinates we obtain
\begin{eqnarray*}
\ext _\phi f_L(\xi)&=&\int f_L(x,y) e^{-i(\xi_1 x+\xi_2 y+\xi_3\phi(x,y))}  \, dx dy\\
&=&C\,  bK^{-3/4}\int f^L(x',y') e^{-i\Phi(x',y'; \xi)} dx'dy',
\end{eqnarray*}
$C\sim 1,$ and where the phase is given by
\begin{eqnarray*}
\Phi(x',y'; \xi)&:=&\xi_3\Big[ b K^{-3/4}\phi^s(x',y') +\phi_x(z_0)( K^{-3/4} x'-A_1by')+\phi_y(z_0)(-B_1K^{-3/4} x'+by') \Big] \\
&+&\xi_1(x_0+K^{-3/4}x'-A_1 by')+\xi_2(y_0-B_1K^{-3/4}x'+by')+\text{constant} \cdot \xi_3.
\end{eqnarray*}
Thus, up to a fixed linear function in $\xi,$ which is irrelevant, we may assume that
\begin{eqnarray*}
\Phi(x',y'; \xi)&=&\xi_3 b K^{-3/4}\phi^s(x',y') +x'K^{-3/4} \big(\xi_1-B_1\xi_2+(\phi_x(z_0)-B_1\phi_y(z_0)) \xi_3\big)\\
&+&y'b\big( \xi_2-A_1\xi_1+(\phi_y(z_0) -A_1\phi_x(z_0))\xi_3\big).
\end{eqnarray*}
This implies that
\begin{equation}\label{Exts}
|\ext_\phi f_L(\xi)|= b K^{-3/4} |\ext_{\phi^s} f^L(S\xi)|,
\end{equation}
where $S\xi$ is defines by
$$
S\xi:=\Big( K^{-3/4}(\xi_1-B_1\xi_2+(\phi_x(z_0)-B_1\phi_y(z_0)) \xi_3, b(\xi_2-A_1\xi_1+(\phi_y(z_0) -A_1\phi_x(z_0))\xi_3), b K^{-3/4} \xi_3\Big).
$$
We shall be interested in estimating $\|\ext_\phi f_L\|_{L^p(B_R)},$ where $B_R$ denotes the Euclidean ball of radius $R$ centered at the origin.
Note that if $\xi\in B_R,$ then $\xi'=S\xi$ lies in the set $B'_R$ defined by
$$
|\xi'_1|\le 3  K^{-3/4} R, \quad |\xi'_2|\le 2b R, \quad |\xi'_3|\le b K^{-3/4} R.
$$
What is important to us is the estimate for the third component $\xi',$ which is bounded by $R':=K^{-3/4}R\ll R.$    This will allow to go from scale $R'$ to scale $R$ as in \cite{bmvp20b}.

Indeed, observe the following estimates, which follow easily from our definition of $f^L$ and \eqref{Exts}:
\begin{eqnarray}\label{passageL}
\begin{split}
\|f^L\|_2&\le (b K^{-3/4})^{-1/2} \|f_L\|_2, \quad \|f^L\|_\infty\le \|f\|_\infty, \\
 \|\ext_\phi f_L\|_{L^p(B_R)}&\le  (b K^{-3/4})^{1-\frac 2p}\|\ext_\phi^sf^L\|_{L^p(B'_R)}.
 \end{split}
\end{eqnarray}
Now assume by   induction hypothesis that
\begin{equation*}
	\|\ext_{\phi^s} f^L\|_{L^{p}(B_{R'})} \leq C_\epsilon R'^\epsilon \|f^L\|_{L^2(\Sigma )}^{2/q}\,\|f^L\|_{L^\infty(\Sigma)}^{1-2/q},
\end{equation*}
where $\ve>0$ is as in Theorem \ref{mainresult}.
By means of Lemma 5.1 in \cite{bmvp20b} we can replace the ball $B_{R'}$ on the left-hand side by $\R^2\times [-R',R']$ and keep the same estimate, with a possibly slightly larger constant $C'_\epsilon.$ In particular, we see that
\begin{equation*}
	\|\ext_{\phi^s} f^L\|_{L^{p}(B'_{R})} \leq C'_\epsilon R'^\epsilon \|f^L\|_{L^2(\Sigma )}^{2/q}\,\|f^L\|_{L^\infty(\Sigma)}^{1-2/q}.
\end{equation*}
Combining this with \eqref{passageL}, we see that
\begin{eqnarray}\nonumber
 \|\ext_\phi f_L\|_{L^p(B_R)}&\le  (b K^{-3/4})^{1-\frac 2p}C'_\epsilon R'^\epsilon \|f^L\|_{L^2(\Sigma )}^{2/q}\,\|f^L\|_{L^\infty(\Sigma)}^{1-2/q}\\ \nonumber
 &\le C'_\epsilon R'^{\ve}(b K^{-3/4})^{\frac 1{q'}-\frac 2p}\|f_L\|_2^{2/q}\|f\|_\infty^{1-2/q}\\
  &\le C'_\epsilon R^\epsilon (K^{-3/4})^{\frac 1{q'}-\frac 2p+\ve}\|f_L\|_2^{2/q}\|f\|_\infty^{1-2/q}, \label{indest}
\end{eqnarray}
since we assume that $p>2q'.$  It is important that the last estimate does not depend on the length $b$ of $L,$ which may vary with $L.$ 
By means of \eqref{indest}, we can now proceed similarly as in  Section 1 of \cite{bmvp20b} and sum these estimates over all boxes $L$ in an appropriate way - for the details of this, we refer to Section \ref{reductiontobroad}.

%
%
%
%
%
%%%%%%%%%%%%%%%%%%%%%%%%%%%%%%%

\setcounter{equation}{0}
\section{Broad points}\label{broad}
\subsection{Definition of broad points and the underlying family of rectangles}

For the unperturbed hyperbolic paraboloid, the definition of broadness is based on horizontal and vertical strips, since these are the sets which lack strong transversality. Here, we will devise a family $\bar{\mathcal{L}}$ of rectangles (and their intersections) adapted to our perturbed hyperbolic paraboloid.

Let us assume as in Definition \ref{strip-broad} that $K\gg 1.$  Moreover, in view of Remark \ref{rem4.1}, let us also assume that $1\leq\mu\leq K^{\ve/2},$ where $\ve>0$ is as in Theorem \ref{mainresult}. Let further fix an  according  family of caps $\tau$ of side length $\mu^{1/2}K^{-1}.$ Note that as  in  \cite[Theorem 2.4]{Gu16} and \cite[Theorem 2.1]{bmvp20b}, our main goal later will be to prove Theorem \ref{broadtheorem}, in which we are indeed assuming that the caps $\tau$  form the basic decomposition of $\Sigma,$ with $\mu=1.$ However, for the inductive argument which is used to prove this theorem, we shall be forced to consider also cases where $\mu>1.$

\smallskip
As usual, for $0<\alpha<1$, we define a point $\xi$ to be $\alpha$-broad for $\E f$,
if
$$
 \max_{L\in\bar{\mathcal{L}}} |\E f_L(\xi)|\le\alpha|\E f(\xi)|,
$$
where $f_L:=\sum\limits_{\tau\subset L} f_\tau$. We define $Br_\alpha\E f(\xi)$ to be $|\E f(\xi)|$ if $\xi$ is $\alpha$-broad, and zero
otherwise.
\smallskip

 We explain now how to construct the family $\bar{\mathcal{L}}$.
 \medskip

As we saw in Section \ref{notstrong},  Case C,  the most troublesome sets lacking  strong transversality are the intersection of the sets $R_I$ and $R_{II}$ from \eqref{curves}. We recall the functions
\begin{eqnarray*}
A(z)=\frac {\phi_{yy}}{\phi_{xy}+\sqrt{H}}(z), \quad B(z)=\frac {\phi_{xx}}{\phi_{xy}+\sqrt{H}}(z).
\end{eqnarray*}
Note that for $\phi\in\text{Hyp}^3$, we have 
 \begin{equation}\label{nablaA}
	 |\nabla A(z)|,\ |\nabla B(z)| \leq 1
 \end{equation}
for all $z\in\Sigma$.

We shall focus the discussion on $A;$ there is an analogous construction with $B$.
Let $\{A_k\}_k$ be a equidistant decomposition of the interval $A(\Sigma)$ of distance $C\muk$, that is, $A_{k+1}=A_k+C\muk$, where we will choose the constant $C$ later, and  put
$$R_{II}^k:=\{z\in\Sigma:|A(z)-A_k|< C \muk\}.$$ 

The sets $R_{II}^k$ are not pairwise disjoint, but $R_{II}^k$ and $R_{II}^{k'}$ may only overlap if $|k-k'|\leq 1$.
As we saw in Lemma \ref{anglestable}, on $R_I\cap R_{II}$, the tangents to $R_I$ point essentially in a fixed direction.
This suggests to denote for any given $k$ by $\tilde\omega_k$ the unit vector pointing in the direction of $\omega_k:=(-A_k,1),$  and decompose $\Sigma$  into strips 
$$S_{k,j}:=\big[(j-1)C'\muk,(j+1)C'\muk\big]\tilde\omega_k^\perp+\R\tilde\omega_k$$ of thickness $2C'\muk$ and direction $\omega_k,$  indexed  by suitable integers $j\in\Z.$ 
Here, 
$\tilde \omega_k^\perp$ denotes a unit vector  orthogonal to $\omega_k,$ and 
$C'$ denotes yet another suitable constant. For fixed $k$, $S_{k,j}$ and $S_{k,j'}$ do not overlap unless $|j-j'|\leq 1$. 
Of course they can overlap quite a lot for different values of $k$, but we want to consider only the part of $S_{k,j}$ that intersects $R_{II}^k$.   More precise, we define the following  subset $S_{k,j}^0$ of $S_{k,j}:$
$$
S_{k,j}^0:=\{z\in S_{k,j}: (z+\R \tilde\omega_k^\perp)\cap S_{k,j}\cap R_{II}^k\ne \emptyset\}.
$$
It is clear that the connected components of $S_{k,j}^0$ are all rectangles inside $S_{k,j}$ of full width $2C'\muk,$ but  unknown length. Since for technical reasons that will become clear later, we do not want the rectangles to be too short, we set 
$$S_{k,j}^1:=S_{k,j}^0+[-\muk,\muk]\tilde \omega_k.
$$
Then the connected components of $S_{k,j}^1$ are rectangles inside $S_{k,j}$ of full width and length at least $2\muk.$

On the other hand, we want the lengths of these rectangles not to be too long either, and therefore divide $S_{k,j}^1$ into rectangles $\tilde L_{k,j}^i$ of lengths at most $\frac12 K^{-\ve'},$  but at least $\muk$ ($\ve'\ll\ve$ to be determined later), by artificially chopping any connected component that is too long. 
Finally, since that artificial chopping may split a small cap $\tau$ of size $\muk$ into two, we set
$$L_{k,j}^i:=\tilde L_{k,j}^i+[-\muk,\muk]\tilde \omega_k,
$$
so that for fixed $k$ and $j$, the sets $L_{k,j}^i$ may intersect, but only two can overlap at any given point.
Since $2\muk<\frac12 K^{-\ve'}$ for sufficiently small $\ve$, every $L_{k,j}^i$ has length between $\muk$ and $K^{-\ve'}$.

\smallskip
Let $\mathcal{L}_1:=\{L_{k,j}^i\}_{k,j,i}$ denote the set of all these rectangles  (which we often simply shall call ``strips'').

By construction, $\dist(z,R_{II}^k)\leq 2C'\muk$ for all $z\in S_{k,j}^0$; for $z\in L^i_{k,j}$, we still have
$$\dist(z,R_{II}^k)\leq (2C'+2)\muk,$$
so that 
\begin{equation}%\label{stilltrue}
	|A(z)-A_k|\leq C\muk + \|\nabla A\|_\infty (2C'+2)\muk \lesssim \muk 
\end{equation}
for all $z\in L_{k,j}^i$.

We summarize the most important properties of our family of rectangles, which follow immediately from their definition:
\begin{remarks}\label{famprop}
 There exist absolute constants $C_1,N_1\ge 1$ such that the following hold true:
 \begin{enumerate}
\item  For  all $k,j,i$ and all $z\in L^i_{k,j},$ 
$$|A(z)-A_k|\leq C_1\muk.$$
\item  At any point, at most $N_1$ sets from $\mathcal{L}_1$ overlap.
\item Every $L\in\mathcal{L}_1$ has length between $\muk$ and $K^{-\ve'}$. 
\end{enumerate}
\end{remarks}

Of course there is the symmetric situation, where the coordinates are interchanged, and $A$ is replaced by $B$, which will give us a similar set of rectangles $\mathcal{L}_2$. 

 To cover also the Cases A and B from Section \ref{notstrong}, we
furthermore need caps of size $\sim\mu^{1/2}K^{-1/4}$.
Let $\mathcal{L}_3$ be a collection of squares  of side length $C\mu^{1/2}K^{-1/4}$ covering $\Sigma$, whose centers are  $\frac C2\mu^{1/2}K^{-1/4}$ separated. 

\smallskip
Finally, we set $\mathcal{L}:=\mathcal{L}_1\cup\mathcal{L}_2\cup\mathcal{L}_3$. This family is already well suited for proving our geometric Lemma \ref{geometric}. However, later on the application of  polynomial partitioning method will  even require a family  which is closed under intersections. We therefore define
$$\bar{\mathcal{L}}:=\{L_1\cap\ldots\cap L_m:L_1,\ldots,L_m\in \mathcal{L}_1\cup\mathcal{L}_2\cup\mathcal{L}_3,\ m\in\N\}.
$$ 
Note that due to Remark \ref{famprop}(ii), the number $m$ of possible intersections is uniformly bounded, and
each $\Delta\in \bar{\mathcal{L}}$  is either contained in a strip $L\in\mathcal{L}_1\cup\mathcal{L}_2$ of dimensions $\mu^{1/2}K^{-3/4}\times b$ (respectively $b\times\mu^{1/2}K^{-3/4}$) for some $b$ with  $\mu^{1/2}K^{-3/4}\le b\le K^{-\ve'},$ or is contained in a large cap $L\in\mathcal{L}_3$ of side length $C\mu^{1/2} K^{-1/4}.$
It is then easy to verify the following
\begin{remark}\label{fambarprop}
There exists an absolute constant $\bar N\in\N$ such that at most $\bar N$ sets $\Delta\in\bar{\mathcal{L}}$ overlap at any given point  $z\in\Sigma.$
\end{remark}

\smallskip

\subsection{The geometric lemma}
A key observation  which is important to adapt the polynomial partitioning method is that  caps which are mutually not strongly transversal are somewhat sparse.

\medskip
 Let us fix a parameter   $\ve'>0$ depending on $\ve$ and $\bar N$ and  then  $M=M(\ve)\in\N$ so that 
\begin{equation}\label{epsprime}
10\bar N\ve' =\ve^8 \quad \text{and}\quad   \frac3{4M(\ve)} \le \ve'\le\frac3{2M(\ve)},
\end{equation}
where $\bar N$ is the constant from Remark \ref{fambarprop}.

\begin{lemma}{\bf(The Geometric Lemma)}\label{geometric}
There is some constant $K_1(\epsilon)$ such that, for every $K\ge K_1(\epsilon)$ and every  $\phi\in \Hyp^{M(\ve)},$ the following holds true:  if  $\F$ is any family  of caps of side length $\mu^{1/2}K^{-1}$ which does not contain two strongly separated caps, then there is subcollection $\mathcal L_0\subset\mathcal L$ of cardinality $|\mathcal L_0|\le  K^{3\epsilon'},$ such that each cap in $\F$  is contained in   at least one element of $\mathcal L_0.$
 \end{lemma}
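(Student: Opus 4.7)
I would fix a reference cap $\tau_0\in\mathcal F$ with center $z_0^c$, and put $A_0:=A(z_0^c)$. Since every other $\tau\in\mathcal F$ fails to be strongly separated from $\tau_0$, the case analysis of Section \ref{notstrong}, together with its reflection obtained by interchanging the roles of $x$ and $y$, assigns each such $\tau$ to one of three groups: (Cases A/B) the two centers are within distance $\lesssim \mu^{1/2}K^{-1/4}$ of each other; (Case C with Assumption 1) the inequalities $|t_{z_0^c}^2(z_0^c,z^c)|\le 100\mu^{1/2}K^{-1}$ and $|A(z^c)-A_0|\le \muk$ both hold; or (Case C, symmetric version) the analogous inequalities with $t^1$ and $B$ in place of $t^2$ and $A$. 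I would then produce a small covering for each group separately.

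For Cases A/B, every such $\tau$ lies in a disk of radius $\lesssim \mu^{1/2}K^{-1/4}$ about $z_0^c$. Since $\mathcal L_3$ is a covering of $\Sigma$ by squares of side $C\mu^{1/2}K^{-1/4}$ with uniformly bounded overlap and $\tfrac{C}{2}\mu^{1/2}K^{-1/4}$-separated centers, at most $O(1)$ elements of $\mathcal L_3$ are needed to absorb this entire group.

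For Case C under Assumption 1, I would argue that $\lesssim K^{\ve'}$ elements of $\mathcal L_1$ suffice. Choose $k$ with $|A_0-A_k|\le \tfrac{C}{2}\muk$. Combining the Lipschitz estimate $|\nabla A|\le 1$ from \eqref{nablaA} with $|A(z^c)-A_0|\le \muk$ and the side length $\mu^{1/2}K^{-1}\ll \muk$ of $\tau$ gives $|A(z)-A_k|\lesssim \muk$ for every $z\in\tau$, so $\tau\subset R_{II}^k$ after a harmless enlargement of the defining constant. A similar argument, using that $t_{z_0^c}^2(z_0^c,\cdot)$ has gradient close to $(1,0)$ by \eqref{nablat2}, shows $\tau\subset R_I$. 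By Lemma \ref{anglestable}, on $R_I\cap R_{II}$ the tangent vectors to the level curves of $t_{z_0^c}^2(z_0^c,\cdot)$ agree with $\omega_k=(-A_k,1)$ up to error $O(\muk)$, which matches the thickness scale of the strip $S_{k,j}$. Integrating this angular error over a length $\lesssim 1$ shows that the portion of any such level curve inside $R_I\cap R_{II}$ is confined to a single $S_{k,j}$, provided the constant $C'$ in the construction of the strips is chosen sufficiently large. Consequently $\tau\subset S_{k,j}^0$, and since the $L_{k,j}^i$'s were designed to overlap by $\muk$ along $\tilde\omega_k$ and every cap has side $\mu^{1/2}K^{-1}<\muk$, the entire cap $\tau$ lies in at least one $L_{k,j}^i$. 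The number of admissible pairs $(k,j)$ is $O(1)$, and for each fixed $(k,j)$ the $L_{k,j}^i$'s covering $S_{k,j}^0$ number at most $\lesssim 1/K^{-\ve'}=K^{\ve'}$, since each has length at most $K^{-\ve'}$. The symmetric Case C is treated identically, using $\mathcal L_2$ and $B$ in place of $\mathcal L_1$ and $A$.

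Summing the three contributions gives $|\mathcal L_0|\lesssim K^{\ve'}$, which is at most $K^{3\ve'}$ for $K\ge K_1(\ve)$. The main technical point, and the step I expect to require most care, is the confinement of the relevant level curves of $t_{z_0^c}^2(z_0^c,\cdot)$ to a single strip $S_{k,j}$; this rests on Lemma \ref{anglestable} for the direction of the level curves, together with the $\muk$-thickening already built into the definitions of $S_{k,j}^1$ and $L_{k,j}^i$ to absorb the artificial chopping.
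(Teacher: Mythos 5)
The high-level structure matches the paper (fix a reference cap, run the three-case analysis of Section~\ref{notstrong}, handle Cases A/B via $\mathcal L_3$, and in Case C try to place the remaining caps inside a family of strips from $\mathcal L_1$ or $\mathcal L_2$), but there is a genuine gap in your treatment of Case C, and you have flagged exactly the wrong step as safe.

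You claim that "integrating this angular error over a length $\lesssim 1$ shows that the portion of any such level curve inside $R_I\cap R_{II}$ is confined to a single $S_{k,j}$." This does not follow. Write $\gamma(y)$ for the level curve of $t^2_{z_0^c}(z_0^c,\cdot)$ and recall from~\eqref{Arelation} that its tangent $(h_y,1)$ satisfies $h_y\approx -A(\gamma(y))$, so the component of $\gamma'$ orthogonal to $\omega_k=(-A_k,1)$ is $\approx A_k-A(\gamma(y))$. The bound "$|\gamma'-\omega_k|\lesssim\muk$" of Lemma~\ref{anglestable} is only available \emph{while} $\gamma(y)$ stays in $R_{II}$, i.e.\ on the set $E=\{y:|A(\gamma(y))-A_0|\lesssim\muk\}$. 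Outside $E$, the tangent direction can deviate from $\omega_k$ by as much as $O(10^{-3})$, and $E$ need not be connected. Between two components of $E$ the curve can drift in the $\omega_k^\perp$ direction by $\int (A_k-A(\gamma(y)))\,dy$, which is in general far larger than the strip width $\sim\muk$; so the pieces $\gamma(E)$ can land in widely separated strips $S_{k,j}$. Consequently "the number of admissible pairs $(k,j)$" is not $O(1)$ — it is precisely the number of connected components of the sublevel set $E$, and without control on that number the bound $|\mathcal L_0|\le K^{3\ve'}$ fails.

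This is where the paper's proof diverges from yours and where it uses one of the central new tools of the article. The paper parametrizes the zero set of $t^2_{z_0^c}(z_0^c,\cdot)$ as $\gamma$, forms $g(t)=A(\gamma(t))-A(z_0^c)$, and applies the sublevel-set estimate of Corollary~\ref{sublevelset} at level $\lambda\sim\muk$. Because $\phi\in\Hyp^{M(\ve)}$ with $M(\ve)$ tied to $\ve'$ via~\eqref{epsprime}, the corollary bounds the number of connected components of $\{|g|<\lambda\}$ by $\lesssim 1+\lambda^{-1/M}\lesssim K^{3\ve'/2}$ (after also invoking Fa\`a di Bruno for $\|g^{(M)}\|_\infty$). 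Each such component is then confined to a single strip $S_{k,j}$ by Lemma~\ref{anglestable} (here your "integrate the angular error" heuristic is correct because one stays in $R_I\cap R_{II}$ throughout the component), and inside each strip the artificial chopping contributes at most $O(K^{\ve'})$ rectangles $L^i_{k,j}$, giving $|\mathcal L_0|\lesssim K^{3\ve'/2}\cdot K^{\ve'}\le K^{3\ve'}$. Your proposal omits Corollary~\ref{sublevelset} entirely, which is exactly the ingredient needed to make your confinement step rigorous.
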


\begin{proof}  
Fix a cap $\tau_1\in\F,$ and recall that we denoted its center by $z_1^c=(x_1^c,y_1^c)$. If $\tau_2\in\F$, then
$$
\min\{|y^c_2-y^c_1|,|t^2_{z^c_1}(z^c_1,z^c_2)|\}\le  100 \mu^{1/2}K^{-1}\text{ and } \min\{|y^c_2-y^c_2|,|t^2_{z^c_2}(z^c_1,z^c_2)|\}\le  100 \mu^{1/2}K^{-1}.
$$
We will follow the discussion in Section \ref{notstrong}, and the division into cases we devised there. In cases A and B, both caps $\tau_1,\tau_2$ are clearly contained in a cap of size $100 \mu^{1/2}K^{-1/4},$ which is contained in a cap of size $C\mu^{1/2}K^{-1/4}$ from our collection $\mathcal{L}_3,$  if we assume that $C$ is sufficiently large.
\smallskip

 This leaves us with {\bf  Case C,} where
\begin{equation}\label{friday}
|t^2_{z^c_1}(z^c_1,z^c_2)|\}\le  100 \mu^{1/2}K^{-1} \ \text{and} \ |A(z^c_1)-A(z^c_2)|\le \mu^{1/2}K^{-3/4}.
\end{equation}
 
 Recall from  \eqref{nablat2} that $\partial_{x} t^2_{z^c_1}(z^c_1,z)\geq 1/2.$ In Lemma \ref{curvesI} we used   \eqref{nablat2} in order to show that we may parametrize the zero set of $t^2_{z^c_1}(z^c_1,\cdot)$ by a curve $\gamma=(\gamma_1,\gamma_2):[-1,1]\to\Sigma$ such that $\ga_2(t)=t.$ 
 In particular,  $\gamma_2(y_2^c)=y_2^c$. But then $z_2^c-\gamma(y_2^c)=(x_2^c-\gamma_1(y_2^c),0),$ and thus
\begin{equation}\label{thursday}
	|z_2^c-\gamma(y_2^c)|
	\leq 2|t^2_{z^c_1}(z^c_1,z^c_2)-t^2_{z^c_1}(z^c_1,\gamma(y_2^c))|
	=2|t^2_{z^c_1}(z^c_1,z^c_2)|
	\leq 200\mu^{1/2}K^{-1}. %< \mu^{1/2}K^{-3/4}
\end{equation}

Consider the $C^{M}$ function 
$$g(t) := A(\gamma(t))-A(z_1^c), \qquad t\in [-1,1].$$

Applying Corollary \ref{sublevelset} to $g$ and level $\lambda:=2 \muk$,  by our choice of $M=M(\ve) $  there exists a family $\I$ of subintervals of $[-1,1]$ such that 
\begin{enumerate}
\item    $|\I|\le 60  M \big(1+\|g^{(M)}\|^{1/M}_\infty K^{3/4M}\big) \le G(M)\,K^{\ve'},$
\item for all $I\in\I$ and all $ t\in I$ we have $|g(t)|< 16\muk,$
\item and for all $t\in I_0\backslash\bigcup_{I\in\I} I$ we have $|g(t)|\geq 2 \muk,$
\end{enumerate}

where $G(M)\ge 1$  for any integer $M\ge 3$  is a constant such that the last estimate in (i) holds uniformly for any $\phi\in \Hyp^M.$ Indeed,  by our definition of the function $g$ and the uniform estimates that are assumed to hold for all functions  $\phi$ in $\Hyp^M$ in combination with  Fa\`a di Bruno's theorem \cite{Sp05}  on derivatives of compositions of functions we easily see that  a uniform estimate 
$$
60  M \big(1+\|g^{(M)}\|_\infty^{1/M})\le G(M)
$$
holds true for all  $\phi\in \Hyp^M.$

In particular, if we set $K_1(\ve):= G(M(\ve))^{8M(\ve)/3},$ then in combination with \eqref{epsprime} we see that 
\begin{enumerate}
\item[(i')] $|\I|\le K^{3\ve'/2}$ if $ K\ge K_1(\ve).$
\end{enumerate}

By \eqref{nablaA}, \eqref{friday} and \eqref{thursday}, we see that 
$$|g(y_2^c)|\leq|A(\gamma(y_2^c))-A(z_2^c)|+|A(z_2^c)-A(z_1^c)|
<  2\muk,
$$
and hence by (iii), $y_2^c\in I$ for some $I\in\I$.  Again by \eqref{thursday}, we see that $\tau_2$ is contained in the neighborhood
$$\cN(I):=\gamma(I)+B(0,\muk)$$
of the curve $\gamma(I)$.

\medskip
Choose $k$ so that $|A_k-A(z^c_1)|\leq \frac{1}{2}C\muk$.
Fix any $t_0\in I,$ and choose  $S_{k,j}$ to be one of our strips of direction $\omega_k$ which contains $\gamma(t_0)$. Since these strips slightly overlap, we may and shall choose $j$ so that $\dist(\gamma(t_0),\partial S_{k,j})\geq \frac12 C'\muk$.

 Recall also that   by (ii), for any $t\in I$ we have 
\begin{equation}\nonumber%\label{Aestimate}
	|A(\gamma(t))-A(z_1^c)|= |g(t)| \leq 16\muk,
\end{equation}
hence $\gamma(t)$ is in the intersections of the regions  $R_I$ and  $R_{II}$ as defined in \eqref{curves} (modified by a harmless factor 16, which we will ignore).
 Using Lemma \ref{anglestable}, we see that the tangential of $\gamma$ on $I$ points essentially in direction $\omega_k$, and more precisely we obtain that 
$$|\langle\gamma(t)-\gamma(t_0),\tilde\omega_k^\perp\rangle|\lesssim \muk.$$
That means that $\cN(I)\subset  S_{k,j}$, provided we choose the constant $C'$ from the construction of these strips big enough. 

For $z\in\cN(I)$, we find a $t\in I$ with $|\gamma(t)-z|<\muk$, so that
$$|A(z)-A_k|\leq |A(z)-A(\gamma(t))|+|g(t)|+|A(z^c_1)-A_k|
\leq (1+16+ C/2 )\muk <C\muk,$$
i.e., $\cN(I)\subset R^k_{II},$  provided $C>34$.
This shows that $\cN(I)\subset S^1_{k,j}$, and, since $\cN(I)$ is clearly connected, even that 
$\cN(I)$ is contained in a connected component of $S^1_{k,j}$. 
Recall that we had divided $S_{k,j}^1$ into rectangles $\tilde L_{k,j}^i$ of lengths at most $\frac12 K^{-\ve'},$ by artificially chopping any connected component that is too long.
 But, for any $\tau_2\subset\cN(I)$ with $\tau_2\cap\tilde L_{k,j}^i\neq\emptyset$, $\tau_2\subset L_{k,j}^i\in\mathcal{L}_1$, and there are at most $6K^{\ve'}$ sets $\tilde L_{k,j}^i$ in any connected component of $S^1_{k,j}$. In combination with (i'), this proves the  claim of the lemma also in Case C.
\end{proof}

%%%%%%%%%
%decomposition into rectangles
%%%%%%%%%%%%%%
\begin{figure}[!h]
\centering
\includegraphics[scale=0.3]{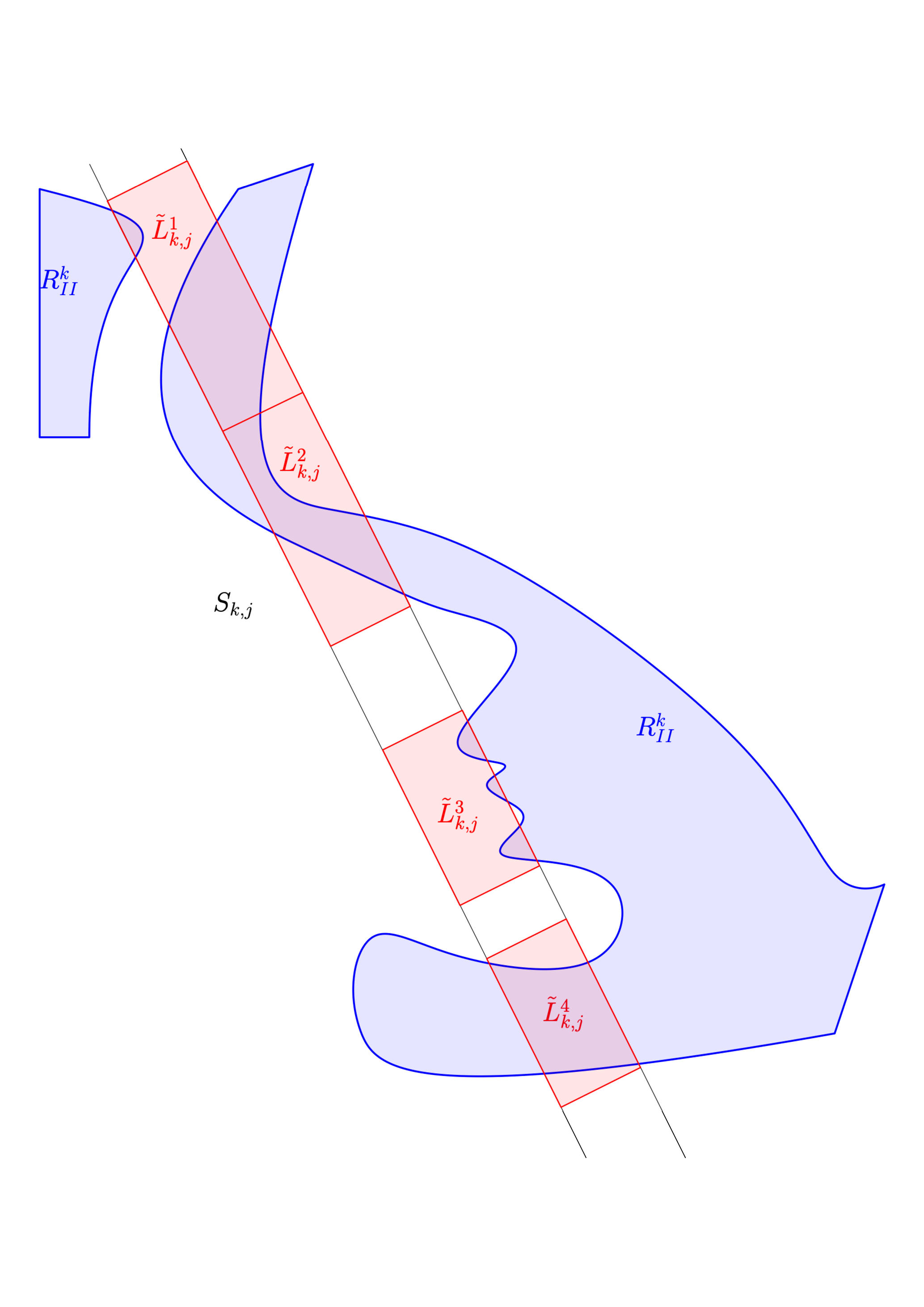}
%\caption{Effective Multiplicity}
\end{figure}

%%%%%%%%%%%%%%%%%%%%%%%%%%%%%%

\setcounter{equation}{0}
\section{Reduction to estimates for the broad part  }\label{reductiontobroad}

 In this Section, we fix $\mu=1$ and  consider our basic decomposition of $\Sigma$ into caps $\tau$ of side length $K^{-1}$ which have pairwise disjoint interiors. Consider  the families $\mathcal L$  and $\overline{\mathcal L}$  defined in the previous section for $\mu=1.$ Recall that each $\Delta\in \overline{\mathcal L}$ is either contained in a strip  of dimensions $K^{-3/4}\times b$ for some $K^{-3/4}\le b\le K^{-\epsilon'},$ or is contained in a large cap of side length $100 K^{-1/4}.$

\smallskip

As in the previous section (for $\mu=1$),
given the function $f,$ $\alpha\in(0,1)$ and $K,$ we say that the point $\xi\in\R^3$ is {\it $\alpha$-broad for
$\E f$} if
$$
 \max_{\Delta\in \overline{\mathcal L} }|\E f_\Delta(\xi)|\le\alpha|\E f(\xi)|,
$$
where  $f_\Delta:=\sum\limits_{\tau\subset \Delta} f_\tau.$

We define $Br_\alpha\E f(\xi)$ to be $|\E f(\xi)|$ if $\xi$ is $\alpha$-broad, and zero
otherwise.

\medskip

We shall prove the following analogue to \cite[Theorem 2.4]{Gu16} and   \cite[Theorem 2.1]{bmvp20b}:

\smallskip

\begin{thm}\label{broadtheorem}
 Let $0<\epsilon< 10^{-10},$  and choose $M=M(\epsilon)\in \N$ sufficiently large so that \eqref{epsprime}  is satisfied. Then there are constants $K=K(\epsilon)\gg 1$ and $C_\epsilon$ such that for any $\phi\in  \Hyp^M$ and 
any radius $R\ge1$ 	 the following hold true:
$$
		\|Br_{K^{-\epsilon}}
\E f\|_{L^{3.25}(B_R)} \leq C_\epsilon R^\epsilon
\|f\|_{L^2(\Sigma )}^{12/13}\, \|f\|_{L^\infty(\Sigma )}^{1/13}
$$
for every $f\in L^\infty(\Sigma ),$ and moreover $K(\epsilon)\rightarrow\infty$ as
$\epsilon\rightarrow0.$
\end{thm}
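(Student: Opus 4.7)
The plan is to prove Theorem \ref{broadtheorem} by a simultaneous induction on the scale $R$ and on a dyadic size parameter for $\|f\|_2^2/\|f\|_\infty^2$, following the polynomial partitioning scheme of Guth \cite{Gu16} in the form adapted to the hyperbolic setting in our previous paper \cite{bmvp20b}. After the usual normalization $\|f\|_\infty\le 1$, I would apply polynomial partitioning to the positive function $|Br_{K^{-\ve}}\E f|^{3.25}$ on $B_R$ with a polynomial $P$ of degree $D\sim R^{\ve^3}$: either the complement of a thickened $R^{1/2+\delta}$-neighborhood $N(Z(P))$ of $Z(P)$ splits $B_R$ into $\sim D^3$ cells $O_\ell$ with essentially equal $L^{3.25}$-contributions (the cellular case), or most of the $L^{3.25}$-mass lies inside $N(Z(P))$ (the algebraic case).

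In the cellular case, each wavepacket at scale $R^{1/2}$ is essentially straight and meets $\lesssim D$ of the cells. Letting $f_\ell$ be the portion of $f$ carried by wavepackets intersecting $O_\ell$, one obtains the mass inequality $\sum_\ell\|f_\ell\|_2^2\lesssim D\|f\|_2^2$, so the average mass per cell is $\sim D^{-2}\|f\|_2^2$. I would apply the induction hypothesis on the mass parameter to each $f_\ell$ at the same scale $R$, sum over $\ell$ by H\"older (using the strict inequality $3.25>3$ to absorb the combinatorial losses), and close this branch exactly as in Section 3 of \cite{Gu16}.

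In the algebraic case, I would cover $N(Z(P))\cap B_R$ by $\lesssim R^{3\delta}$ balls $B_\rho$ of radius $\rho=R^{1-\delta}$, and on each such ball split the wavepackets into a transverse family $\W_\rho$ (tubes crossing $Z(P)\cap 2B_\rho$ transversally) and a tangential family $\cT_\rho$ (tubes remaining in an $R^{1/2+\delta}$-neighborhood of $Z(P)\cap 2B_\rho$). Transverse wavepackets are few per cap, at most $\lesssim D^{O(1)}$, and are handled by the induction on $R$ applied at the smaller scale $\rho<R$. For the tangential contribution at a broad point $\xi\in B_\rho$, I would apply a Bourgain-Guth type dichotomy combined with the Geometric Lemma \ref{geometric}: either there are two tangential caps $\tau_1,\tau_2\subset\cT_\rho$ that are strongly separated, in which case Remark \ref{Gammasize} supplies the transversality constant $|\Gamma_z|\gtrsim K^{-2}$ needed for a bilinear $L^2$-type estimate of Lee-Vargas type \cite{lee05}, \cite{v05}; or, by Lemma \ref{geometric}, all tangential caps lie in at most $K^{3\ve'}$ sets $L\in\mathcal L$. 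In the latter subcase the broadness bound $|\E f_\Delta|\le K^{-\ve}|\E f|$ for every $\Delta\in\bar{\mathcal L}$ produces a gain factor $K^{3\ve'-\ve}$, which by \eqref{epsprime} is acceptable, and triggers the rescaling from Subsections \ref{movestrip}--\ref{indu}: after the $\tilde z$-coordinate change and the subsequent anisotropic dilation, the rescaled phase $\phi^s$ again lies in $\Hyp^{M(\ve)}$, so the induction on $R$ may be invoked at the reduced scale $R'=K^{-3/4}R$ and yields the decisive gain \eqref{indest}, which is positive precisely because $p>2q'$.

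The main obstacle is to orchestrate this algebraic tangential subcase so that all small parameters $\ve, \ve', \delta, \alpha=K^{-\ve}, K^{-1}, D^{-1}$ are mutually compatible. In the unperturbed saddle the exceptional strips are axis-aligned and the class of phases is preserved by the natural anisotropic dilations, whereas here every rectangle $L\in\mathcal L$ has a direction governed by $A(z_0)$ or $B(z_0)$, and the passage from scale $R'=K^{-3/4}R$ back to $R$ only succeeds because of the derivative control \eqref{derivcontrol} along the null direction $\omega=(-A_1,1)$; that control in turn rests on Corollary \ref{sublevelset} and Lemma \ref{derivinterpolate}, used through the construction of the strips $S_{k,j}$ and their subdivisions in Section \ref{broad}. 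Balancing these gains and losses is what forces the relation $10\bar N\ve'=\ve^8$ in \eqref{epsprime} and fixes the final constant $C_\ve$.
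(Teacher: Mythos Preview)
Your overall architecture (induction on $R$ and on the mass, polynomial partitioning, cellular versus algebraic, transverse versus tangential) matches the paper's. The gap is in how you treat the ``no two strongly separated tangential caps'' subcase of the algebraic tangential contribution: you propose to invoke the rescaling from Subsections~\ref{movestrip}--\ref{indu} and the gain \eqref{indest}. That rescaling is \emph{not} used in the proof of Theorem~\ref{broadtheorem}; it is used only in Section~\ref{reductiontobroad}, to pass from Theorem~\ref{broadtheorem} to Theorem~\ref{mainresult}. Inside the broad estimate there is no mechanism for rescaling: the induction hypothesis concerns $Br_\alpha\E$, and after the anisotropic change of variables a broad point for $\E f$ has no reason to be broad for $\E_{\phi^s}f^L$; if instead you wanted to invoke the full estimate \eqref{indest} at scale $R'$, that is precisely the conclusion of Theorem~\ref{mainresult}, so the argument becomes circular.

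What the paper actually does in this subcase (Lemma~\ref{lemma3.8}, Case~2) is different and more delicate. One first introduces the set $I$ of caps $\tau$ with $|\E f_{\tau,j,tang}(\xi)|\le K^{-100}|\E f(\xi)|$. The family $I^c$ then has no two strongly separated caps, so by the Geometric Lemma it is covered by $\le K^{3\ve'}$ sets from $\mathcal L$; call the union of the corresponding caps $J$. Broadness and inclusion--exclusion over $\bar{\mathcal L}$ give $|\sum_{\tau\in J}\E f_\tau(\xi)|\le\frac1{10}|\E f(\xi)|$, hence $|\E f(\xi)|\lesssim|\sum_{\tau\in J^c}\E f_\tau(\xi)|$. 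Now the key point you are missing: since $J^c\subset I$, each summand has \emph{small tangential part by definition of $I$}, so $\sum_{\tau\in J^c}\E f_\tau$ is, up to negligible errors, purely \emph{transverse}. One then checks that $\xi$ is $K^{4\bar N\ve'}\alpha$-broad for $g:=f_{J^c,j,trans}$, and this feeds into the induction on $R$ at scale $R^{1-\delta}$ with the slightly worse broadness parameter $K^{4\bar N\ve'}\alpha$. The relation $10\bar N\ve'=\ve^8$ in \eqref{epsprime} is there precisely so that this degradation of $\alpha$ is absorbed by the factor $R^{\delta_{trans}\log(K^\ve\alpha\mu)}$, not to balance a rescaling gain. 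To run this induction you also need to state and prove the more general Theorem~\ref{largetheorem} (with $\mu\ge1$, general $\alpha\ge K^{-\ve}$, and the extra factor $R^{\delta_{trans}\log(K^\ve\alpha\mu)}$), since the transverse step changes both $\alpha$ and the cap multiplicity; your sketch, aimed directly at Theorem~\ref{broadtheorem} with $\alpha=K^{-\ve}$ fixed, cannot close.
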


To show that Theorem \ref{mainresult} follows from this result, let us note that it is enough to consider $q$ close to $2.6$ and put  $p:=3.25.$ We  divide the domain of integration $B_R$ in  \eqref{mainresultest}   into   three subsets:
\begin{eqnarray*}
A&:=&\{\xi\in B_R:\xi\text{ is $K^{-\epsilon}$-broad for $\E f$}\},\\
B&:=&\{\xi\in B_R:|\E f_\Delta(\xi)|> K^{-\epsilon}|\E f(\xi)|\text{ for some }\Delta\in \overline{\mathcal L},\\
&&\hskip6cm \,\text{with } \Delta \text{ contained in a strip } L\in\mathcal L_1\cup\mathcal{L}_2\}, \\
C&:=&\{\xi\in B_R\setminus B:|\E f_\Delta(\xi)|> K^{-\epsilon}|\E f(\xi)|\text{ for some }\Delta\in \overline{\mathcal L}, \\
&&\hskip6.2cm  \text{with } \Delta \text{ contained in a large cap } L\in\mathcal L_3 \}.
\end{eqnarray*}

If $\xi\in A$, then $|\E f(\xi)|=Br_{K^{-\epsilon}}\E f(\xi)$, so that the contribution of $A$ can be controlled using Theorem \ref{broadtheorem}. Notice that
$$\|f\|_{L^2(\Sigma )}^{12/13}\, \|f\|_{L^\infty(\Sigma )}^{1/13}\leq\|f\|_{L^2(\Sigma )}^{2/q}\, \|f\|_{L^\infty(\Sigma )}^{1-2/q},$$
since $q>2.6>13/6$.
\smallskip

 If $\xi\in B,$ then  there is some $\Delta\in\overline{\mathcal L}$ which is contained in a strip of dimensions $K^{-3/4}\times b$ for some $b\le K^{-\epsilon'},$ so that $|\E f_\Delta(\xi)|> K^{-\epsilon}|\E f(\xi)|.$  Then we may estimate
$$
|\E f(\xi)|< K^\epsilon \sup_\Delta|\E f_{\Delta}(\xi)| \leq K^\epsilon \Big(\sum_\Delta|\E f_\Delta(\xi)|^p\Big)^{1/p},
$$
where here the supremum and sum are taken over all  $\Delta\in \bar{\mathcal{L}}$ which are contained  in a strip $L\in\mathcal L$ of dimensions $K^{-3/4}\times b$ for some $b\le K^{-\epsilon'}$ (which may depend on $\Delta$). Thus, we can apply to any such $f_\Delta$ the scaling (associated to the corresponding strip $L$) described in Subsection \ref{indu},  more precisely estimate \eqref{indest}, and obtain
$$
 \|\E f_\Delta\|_{L^p(B_R)}\le C' _\ve  R^\epsilon (K^{-3/4})^{\frac 1{q'}-\frac 2p+\ve}\|f_\Delta\|_2^{2/q}\|f\|_\infty^{1-2/q}. $$
 
 Therefore,
 \begin{eqnarray*}
	\|\E f\|_{L^p(B)}
	&\leq&	K^{\epsilon} \Big(\sum_\Delta\|\E f_\Delta\|^p_{L^p}\Big)^{1/p}\\
	&\leq& C'_\ve K^{\epsilon} R^\epsilon (K^{-3/4})^{\frac 1{q'}-\frac 2p+\ve}
\Big(\sum_\Delta\|f_\Delta\|_{2}^{2p/q}\,\|f\|_{\infty}^{p(1-2/q)}\Big)^{1/p}.
\end{eqnarray*}
Since $2p/q>2,$ taking into account the overlap of the elements of $\overline{\mathcal L}$ (see Remark \ref{fambarprop}) we estimate
$$
\sum_\Delta\|f_\Delta\|_{2}^{2p/q}\le \Big(\sum_\Delta\|f_\Delta\|_{2}^{2}\big)^{p/q}\le\bar N^{p/q}\|f\|_{2}^{2p/q}.
$$
Hence,
$$
	\|\E f\|_{L^p(B)}\leq  C'_\ve \bar N^{1/q} K^{\epsilon} R^\epsilon (K^{-3/4})^{\frac 1{q'}-\frac 2p+\ve}\|f\|_{2}^{2/q}\,\|f\|_{\infty}^{(1-2/q)}
$$

$$
\leq \frac 1{10}C_{\epsilon} R^\epsilon \|f\|_{2}^{2/q}\,\|f\|_{\infty}^{1-2/q},
$$
since $p> 2q'$.

\medskip

For $\xi\in C,$ isotropic scaling gives the same result.

%%%%%%%%%%%%%%%%%%%%%%%%%%%%%%%%%%%%%%%%%%%%%%%%%%%%%%%%%%%%%%%%%%%%%%%%%%%%%%
%%%%%%%%%%% Section 5
%%%%%%%%%%%%%%%%%%%%%%%%%%%%%%%%%%%%%%%%%%%%%%%%%%%%%%%%%%%%%%%%%%%%%%%%%%%%%%

\setcounter{equation}{0}
\section{Proof of Theorem \ref{mainresult2}}\label{proofthm}

Following Section 3 in \cite{Gu16} and \cite{bmvp20b}, we shall next formulate a more general statement in Theorem \ref{broadtheorem} which will become amenable to inductive arguments. We have to consider $\mu\ge1.$

\medskip

We assume that we are given $\mu\ge 1,$ a dyadic natural number  $K\gg 1$ and a family of caps  $\tau$ of side length $\mu^{1/2}K^{-1},$ covering $\Sigma =[0,1]\times[0,1],$ such that their centers are $K^{-1}$-separated. Hence,  at any point there will be at most  $\mu$  of these caps which overlap at that point. Notice also that there are at most $K^2$
 caps $\tau$  in the family. We also assume that we have a decomposition
\begin{equation}\nonumber%\label{fsumtau}
 f=\sum_\tau f_\tau,
\end{equation}
where $\supp f_\tau\subset\tau.$
\smallskip

We adapt the notion of broadness to the modified family of caps $\tau$:
For each $\Delta\in\bar{\mathcal{L}},$ 
we define $f_\Delta:=\sum\limits_{\tau\subset \Delta} f_\tau.$

\smallskip

Let $\alpha\in(0,1).$
Given the function $f$  and $K,$ we say that the point $\xi\in\R^3$ is {\it $\alpha$-broad for}
$\E f$ if
$$
 \max_{\Delta\in \overline{\mathcal L}} |\E f_\Delta(\xi)|\le\alpha|\E f(\xi)|.
$$

We define $Br_\alpha\E f(\xi)$ to be $|\E f(\xi)|$ if $\xi$ is $\alpha$-broad, and zero
otherwise.

\medskip

Theorem \ref{broadtheorem} will be a consequence of the following

\begin{thm}\label{largetheorem}
Let $0<\epsilon< 10^{-10}.$  
Then there are constants $M=M(\epsilon)\in \N, \,K=K(\epsilon)\gg 1,$ and $C_\epsilon$ such that for any $\phi\in  \Hyp^M,$   any family of caps $\tau$  with multiplicity at most $\mu$ covering $\Sigma$ as above  and associated  functions $f_\tau$  which decompose $f,$  
any radius $R\ge 1,$ and any $\alpha\ge K^{-\epsilon},$  the following
hold true:
\smallskip

If for every $\omega\in \Sigma,$ and every cap $\tau$  as above,
\begin{equation}\label{average}
\oint_{B(\omega,R^{-1/2})}|f_\tau|^2\le 1,
\end{equation}
then
\begin{equation}\label{broadest}
\int_{B_R}(Br_{\alpha} \E f)^{3.25} \leq C_\epsilon R^\epsilon
\bigg(\sum_\tau\int|f_\tau|^2\bigg)^{3/2+\epsilon}R^{\delta_{trans}\log(K^\epsilon\alpha\mu)},
\end{equation}
where $\delta_{trans}:=\epsilon^6.$ Moreover,
$K(\epsilon)\rightarrow\infty$ as $\epsilon\rightarrow0.$
\end{thm}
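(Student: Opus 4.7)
The plan is to prove Theorem \ref{largetheorem} by a double induction on the radius $R$ and on the $L^2$-quantity $\sum_\tau \int |f_\tau|^2$, following the polynomial partitioning scheme of Guth \cite{Gu16} in the form adapted to perturbed hyperbolic surfaces in \cite{bmvp20b}. I would fix a parameter hierarchy $\delta_{\mathrm{poly}}\ll\delta_{\mathrm{trans}}=\epsilon^6\ll\epsilon'\ll\epsilon$, choose $M=M(\epsilon)$ and $K=K(\epsilon)$ at the end, and treat the base case $R\lesssim 1$ and the regime $\alpha<K^{-\epsilon}$ (where $Br_\alpha\E f\equiv 0$) trivially.

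The induction step applies polynomial partitioning at degree $D=R^{\delta_{\mathrm{poly}}}$, yielding a polynomial $P$ whose zero set $Z$ partitions $\R^3\setminus Z$ into $\sim D^3$ open cells carrying comparable broad-norm $L^{3.25}$-mass. In the \emph{cellular} case, where most of the $L^{3.25}(B_R)$-mass of $Br_\alpha\E f$ lies outside the thickening $W:=N_{R^{1/2}}(Z)\cap B_R$, each wave packet meets only a sparse collection of cells (by B\'ezout), so redistributing the $L^2$-masses and invoking the inductive hypothesis cell-by-cell produces a gain that beats the $R^\epsilon$ loss. In the \emph{algebraic} case the bulk concentrates in $W$; I would cover $W$ by balls $B_\rho$ of radius $\rho=R^{1-\delta_{\mathrm{trans}}}$ and, inside each such ball, split the contributing caps into those \emph{transverse} and \emph{tangent} to $Z$. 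For the transverse caps, Remark \ref{Gammasize} supplies the lower bound on $|\Gamma_z|$ needed for the standard bilinear-to-linear reduction, and induction on $R$ at scale $\rho$ produces exactly the logarithmic factor $R^{\delta_{\mathrm{trans}}\log(K^\epsilon\alpha\mu)}$ in \eqref{broadest}.

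The main obstacle is the \emph{tangent} contribution, and this is where Sections \ref{geometry}--\ref{broad} are decisive. Inside each $B_\rho$, the tangent caps form a family no two of which are strongly separated, so by the Geometric Lemma \ref{geometric} they are contained in at most $K^{3\epsilon'}$ elements $\Delta\in\bar{\mathcal L}$, each being either a strip of dimensions $\mu^{1/2}K^{-3/4}\times b$ (with $\mu^{1/2}K^{-3/4}\le b\le K^{-\epsilon'}$) or a large cap of side $\lesssim\mu^{1/2}K^{-1/4}$. Since $\alpha$-broadness gives $|\E f_\Delta(\xi)|\le\alpha|\E f(\xi)|$ for every $\Delta\in\bar{\mathcal L}$, the aggregate tangent contribution at a broad point is at most $K^{3\epsilon'}\alpha|\E f(\xi)|$; with $\alpha=K^{-\epsilon}$ and $\epsilon'\ll\epsilon$ this is absorbed into the broadness bound, forcing the transverse caps to dominate and closing the algebraic case.

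When in the reduction of Section \ref{reductiontobroad} or in an intermediate induction step one still has to estimate the contribution of a single strip $L$, I would apply the rescaling of Subsections \ref{movestrip}--\ref{indu}: the affine-linear change aligning $L$ vertically, combined with the key identity \eqref{nablanulldir}, the interpolation Lemma \ref{derivinterpolate} applied to $\langle\omega,\nabla\rangle^2\phi$, and the resulting bounds \eqref{derivcontroltilde}--\eqref{regularitytilde}, produce a rescaled phase $\phi^s$ that again belongs to $\Hyp^M$. The scaling identity \eqref{Exts} then reduces the problem at scale $R$ to one at the strictly smaller scale $R'=K^{-3/4}R$ (cf.\ \eqref{indest}), so the outer induction on $R$ closes. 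The final choice of $\delta_{\mathrm{poly}},\delta_{\mathrm{trans}},\epsilon',K$ is made exactly as in \cite[Sections 6--8]{bmvp20b} to make the cellular, transverse, and tangent estimates compound and to deliver both the exponent $3/2+\epsilon$ on the $L^2$-sum and the transversality factor $R^{\delta_{\mathrm{trans}}\log(K^\epsilon\alpha\mu)}$.
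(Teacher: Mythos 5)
Your proposal captures the correct skeleton—polynomial partitioning into cells and a wall, a cellular/algebraic dichotomy handled by a double induction on $R$ and on $\sum_\tau\int|f_\tau|^2$, a tangent/transverse splitting within balls, a bilinear ingredient tied to strong separation, and the Geometric Lemma as the new input replacing the horizontal/vertical-strip trick of the unperturbed saddle. However, several specifics are off in ways that would prevent the argument from closing. First, the parameter hierarchy is inverted: the paper (following Guth) takes $\ve'\ll\delta_{trans}=\ve^6\ll\delta_{deg}=\ve^4\ll\delta=\ve^2\ll\ve$ with $\ve'\sim\ve^8$, whereas you write $\delta_{\rm poly}\ll\delta_{\rm trans}\ll\ve'\ll\ve$. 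The relation $\delta_{trans}\ll\delta_{deg}$ is needed so that ${\rm Poly}(R^{\delta_{deg}})$ and the transversality loss are absorbed by $R^{-\delta\ve}$ in the final bookkeeping, and $\ve'$ must be tiny (it appears as $K^{3\bar N\ve'}$ and in the degraded broadness parameter $K^{4\bar N\ve'}\alpha$) so that it is beaten by $K^{-\ve/2}$; your ordering breaks both. Second, you have the tangent/transverse roles swapped: the bilinear estimate based on Remark \ref{Gammasize} (strong separation) handles the \emph{tangent} part (Proposition \ref{prop3.9} and Lemma \ref{lemma4.11}), while the induction on the radius (at scale $R^{1-\delta}$, not $R^{1-\delta_{trans}}$) handles the \emph{transverse} part via Lemma \ref{lemma3.5}; also one classifies tubes, not caps, by tangency/transversality to $Z(P)$.

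Third, and most substantively, you do not isolate the key pointwise inequality—Lemma \ref{lemma3.8}—which is the one place the Geometric Lemma enters. Your description ("the tangent caps form a family no two of which are strongly separated, so … the aggregate tangent contribution is at most $K^{3\ve'}\alpha|\E f(\xi)|$") skips the crucial case split: one first forms the set $I^c$ of caps whose tangent contribution is large at $\xi$; either $I^c$ contains two strongly separated caps (giving the $K^{100}{\rm Bil}(\E f_{j,tang})$ term), or it does not, in which case the Geometric Lemma covers $I^c$ by at most $K^{3\ve'}$ sets from $\mathcal L$, and one shows via broadness and the inclusion–exclusion on $\bar{\mathcal L}$ that $\xi$ is $K^{4\bar N\ve'}\alpha$-broad for the purely transverse function $f_{J^c,j,trans}$. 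This degraded-broadness reduction—rather than a direct "absorbed into the broadness bound"—is what makes the transverse induction applicable. Finally, the strip rescaling of Subsections \ref{movestrip}–\ref{indu} is used only to deduce Theorem \ref{mainresult} from Theorem \ref{broadtheorem} (in Section \ref{reductiontobroad}); it does not appear inside the proof of Theorem \ref{largetheorem}, so your remark about needing it "in an intermediate induction step" reflects a misreading of the proof architecture.
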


Here, in $\R^n,$ by $B(\omega,r)$ we denote the Euclidean ball of radius $r>0$ and center $\omega,$ and by $\oint_A f:=\frac 1{|A|}\int_A f$ we
denote the mean value  of $f$ over the measurable set $A$ of volume $|A|>0.$

\medskip

From this, as in \cite{bmvp20b} and \cite{Gu16}, Theorem \ref{broadtheorem} follows taking $\mu=1.$ 

\medskip

As in  these papers, let us also choose 
\begin{equation}\label{deltas}
\delta_{trans}:=\epsilon^6,\qquad \delta_{deg}:=\epsilon^4, \qquad \delta:=\epsilon^2,
\end{equation}
  so that in particular
$$
\delta_{trans}\ll\delta_{deg}\ll \delta\ll \epsilon< 10^{-10}.
$$  
 We next choose $M=M(\epsilon)\in \N$ and $\ve'>0$  according to  \eqref{epsprime}, i.e.,   
\begin{equation*}
10\bar N\ve' =\ve^8 \quad \text{and}\quad   \frac3{4M(\ve)} \le \ve'\le\frac3{2M(\ve)}, 
\end{equation*}
and assume  that $\ve>0$ is so small that  also 
 $M\delta\gg 1000$  holds true (compare Proposition \ref{packets}).

\smallskip
 
We also set
\begin{equation}\label{KD}
K=K(\epsilon):= \floor{K_1(\ve) +e^{\epsilon^{-10}}+1},  \qquad D=D(\epsilon):=R^{\delta_{deg}}=R^{\epsilon^4},
\end{equation}
where $K_1(\ve)$ is the constant from the Geometric Lemma \ref{geometric} and $\floor{x}$ denotes  the integer part of $x,$ and where we are assuming that $R\ge 1$ is  given. 
In particular, we then have $K\ge K_1(\ve),$ as assumed in the Geometric Lemma.

\smallskip

The following remark  is  similar to Remark 4.1 in \cite{bmvp20b} and holds (with the same proof) also here:

\begin{remarks}\label{rem4.1}
a) It is enough to consider the case where $\alpha\mu\le K^{-\epsilon/2},$  because in the
other case, the exponent $\delta_{trans}\log(K^\epsilon\alpha\mu)$ is very large  and the estimate \eqref{broadest} trivially holds true.   Note that since in Theorem \ref{largetheorem} we are also assuming that  $\alpha\ge K^{-\epsilon},$
this implies that $\mu\le K^{\ve/2}. $
Henceforth, we shall therefore always assume that $\alpha\mu\le K^{-\epsilon/2}\le 10^{-5}.$ 

b) It is then also enough to consider the case where  $R$ is tremendously  bigger than $K,$ say $R\ge 1000\, e^{K^{ \,e^{\epsilon^{-1000}}}}.$
\end{remarks}

As usual, we will work with wave packet decompositions of the functions $f$ defined on
${\bf S_\phi}:=\{(x,y,\phi(x,y)\;:\; (x,y)\in \Sigma\}.$ Following
\cite{Gu16}, we decompose $\Sigma $ into  squares (``caps'') $\theta$ of side length  $R^{-1/2}.$  By
$\omega_\theta$ we shall denote the center of $\theta,$ and by  $\nu(\theta)$  the ``outer'' unit normal  to
${\bf S_\phi}$ at the point $(\omega_\theta,\phi(\omega_\theta))\in {S_\phi},$ which points into the direction of $(-\nabla \phi(\omega_\theta),-1).$
 $\T(\theta)$ will denote a set  of  $R^{1/2}$-separated tubes $T$ of radius $R^{1/2+\delta}$ and length $R,$ which are  all parallel to  $\nu(\theta)$ and for which the corresponding thinner tubes of radius $R^{1/2}$ with the same axes
cover  $B_R.$ We will  write $\nu(T):=\nu(\theta)$ when $T\in\T(\theta).$

Note that  for each $\theta,$ every  point $\xi\in B_R$ lies in
$O(R^{2\delta})$ tubes $T\in\T(\theta).$ We put  $\T:=\bigcup\limits_{\theta} \T(\theta).$  Arguing in the same way as in \cite[Proposition 2.6]{Gu16},  and observing that it is  not necessary for the arguments in the proof, which are based on integrations by parts,  that the phase $\phi$ is $C^\infty$, but merely  $C^{M}$ for sufficiently large $M$ (more precisely,  $M\delta\gg 1000$),  we arrive at the following  approximate wave packet decomposition.  
\begin{proposition}\label{packets}
Assume that $R$  is  sufficiently large (depending on $\delta$).  Then, for any $\phi\in\Hyp^M$   (with $M=M(\ve)$ as before), given  $f\in
L^2(\Sigma),$ we may associate to  each tube $T\in\T$  a function $f_T$ such that the following hold true:
\begin{itemize}
\item[ a)] If $T\in\T(\theta),$ then $\supp f_T\subset 3\theta.$
\item[ b)] If $\xi\in B_R\setminus T,$ then $|\E f_T(\xi)|\le R^{-1000}\|f\|_2.$
\item[c)] For any $x\in B_R,$ we have $|\E f(x)-\sum_{T\in\T}\E f_T(x)|\le
    R^{-1000}\|f\|_2.$
\item[d)] (Essential orthogonality) If $T_1,T_2\in \T(\theta)$ are disjoint, then \hfill \newline $\big|\int f_{T_1} \overline{f_{T_2} }\big | \le R^{-1000} \int_{3\theta} |f|^2.$
\item[e)] $\sum_{T\in\T(\theta)}\int_{\Sigma}|f_T|^2\le C\int_{3\theta}|f|^2.$

\end{itemize}
\end{proposition}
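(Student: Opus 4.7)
The plan is to adapt the wave packet decomposition of \cite[Prop.~2.6]{Gu16} essentially verbatim, using a two-step partition of unity (physical, then Fourier). Properties (a), (c), (d), (e) will follow from standard Plancherel/orthogonality considerations and Schwartz-tail estimates. The only genuine novelty, and the sole obstacle, is ensuring that the non-stationary phase argument used to prove the off-tube decay (b) goes through with merely $C^M$ regularity of $\phi$ in place of Guth's $C^\infty$ assumption; this will be forced through by the choice $M=M(\ve)$ from \eqref{epsprime}, which ensures $M\delta\gg 1000$.

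\textbf{Construction.} Fix a smooth bump $\chi\ge 0$ on $\R^2$ with $\supp\chi\subset B(0,3/2)$ and $\sum_{n\in\Z^2}\chi(\cdot-n)\equiv 1$. Set $\eta_\theta(x):=\chi(R^{1/2}(x-\omega_\theta))$, so that $\supp\eta_\theta\subset 3\theta$ and $\sum_\theta\eta_\theta\equiv 1$; put $g_\theta:=\eta_\theta f$, supported in $3\theta$. Next fix a smooth partition of unity $\{\psi_v\}_{v\in V}$ on $\R^2$, where $V\subset\R^2$ is an $R^{1/2}$-separated lattice and each $\psi_v$ is a bump of scale $R^{1/2+\delta}$ centered at $v$, and set
\[
f_{T(\theta,v)}:=\eta_\theta\cdot\mathcal F^{-1}\!\big(\psi_v\,\widehat{g_\theta}\big),
\]
where $T(\theta,v)\in\T(\theta)$ is the tube in direction $\nu(\theta)$ uniquely determined by $v$ through the standard position-frequency duality (this correspondence is a bijection onto $\T(\theta)$).

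\textbf{Easy properties.} Property (a) is immediate from the support of $\eta_\theta$. For (c), summing $\sum_v\psi_v\equiv 1$ gives $\sum_v\mathcal F^{-1}(\psi_v\widehat{g_\theta})=g_\theta$, so $\sum_vf_{T(\theta,v)}=\eta_\theta g_\theta=\eta_\theta^2 f$, and then $\sum_\theta\eta_\theta^2 f$ differs from $f$ only by Schwartz-tail errors of size $O(R^{-1000}\|f\|_2)$; after extending $\E$ linearly we obtain (c). For (e), Plancherel together with the bounded overlap of the supports of the $\psi_v$ yields
\[
\sum_{T\in\T(\theta)}\int|f_T|^2\lesssim\sum_v\int|\psi_v\widehat{g_\theta}|^2\lesssim\int|g_\theta|^2\le\int_{3\theta}|f|^2.
\]
Property (d) follows analogously from the near-disjoint support of $\psi_{v_1}$ and $\psi_{v_2}$ when $T_1\ne T_2$ in $\T(\theta)$, again with Schwartz tails absorbed into the $R^{-1000}$ budget.

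\textbf{The main obstacle: property (b).} By Fourier inversion,
\[
\E f_{T(\theta,v)}(\xi)=\iint \eta_\theta(x)\,\psi_v(\eta)\,\widehat{g_\theta}(\eta)\,e^{i\Phi(x,\eta;\xi)}\,d\eta\,dx,
\]
with phase $\Phi(x,\eta;\xi)=(\eta-(\xi_1,\xi_2))\cdot x-\xi_3\phi(x)$ and $x$-gradient $\nabla_x\Phi=\eta-(\xi_1,\xi_2)-\xi_3\nabla\phi(x)$. Taylor-expanding $\nabla\phi$ around $\omega_\theta$ and recalling $\nu(\theta)\parallel(-\nabla\phi(\omega_\theta),-1)$, one verifies that the hypothesis $\xi\in B_R\setminus T(\theta,v)$ forces $|\nabla_x\Phi|\gtrsim R^{1/2+\delta/2}$ uniformly on the joint support of $\eta_\theta(x)\psi_v(\eta)$, since the perpendicular distance of $\xi$ from the axis of $T(\theta,v)$ is at least $R^{1/2+\delta}$. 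Iterated integration by parts $N$ times in $x$ then produces the decay factor $R^{-N\delta/2}$ and releases derivatives of $\phi$ of order up to $N$, each bounded by \eqref{regularity}. Choosing $N\approx 2000/\delta$ yields the desired bound $|\E f_T(\xi)|\le R^{-1000}\|f\|_2$. The key point, and the sole place where finite regularity matters, is that this $N$ must satisfy $N\le M$; our choice of $M=M(\ve)$ in \eqref{epsprime} guarantees $M\delta\gg 1000$, exactly what is needed in place of Guth's $C^\infty$ hypothesis, and the remainder of the argument goes through unchanged.
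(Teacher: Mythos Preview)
Your approach is exactly the paper's: it simply refers to \cite[Proposition~2.6]{Gu16} and observes that the integration-by-parts argument there requires only $C^M$ regularity with $M\delta\gg1000$, which is precisely the point you isolate in (b).

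There is, however, a concrete slip in your verification of (c). With your construction one has $\sum_{T}f_T=\sum_\theta\eta_\theta^2 f$, and since you imposed $\sum_\theta\eta_\theta\equiv1$ (not $\sum_\theta\eta_\theta^2\equiv1$), the difference $\big(\sum_\theta\eta_\theta^2-1\big)f$ is of size comparable to $\|f\|_2$, not $O(R^{-1000}\|f\|_2)$; this is \emph{not} a Schwartz-tail error. The standard fix is to replace the outer factor $\eta_\theta$ in the definition of $f_{T(\theta,v)}$ by a \emph{wider} cutoff $\tilde\eta_\theta$, still supported in $3\theta$ but identically $1$ on $\supp\eta_\theta$. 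Then $\sum_v f_{T(\theta,v)}=\tilde\eta_\theta\, g_\theta=g_\theta$ exactly, hence $\sum_\theta\sum_v f_{T(\theta,v)}=f$, and the only residual error in (c) comes from restricting to the finitely many tubes in $\T(\theta)$ that actually meet $B_R$, which is genuinely negligible. Properties (a), (d), (e) are unaffected by this modification, and your argument for (b) goes through unchanged.
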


\smallskip

We next recall the version of the polynomial ham sandwich theorem with non-singular polynomials from \cite{Gu16}.
 If $P$ is a real polynomial on $\R^n,$ we denote by $Z(P):=\{\xi\in\R^n: P(\xi)=0\}$ its  null variety. $P$ is said to be {\it non-singular} if $\nabla P(\xi)\ne 0$ for every point $\xi\in Z(P).$

Then, by  in \cite[Corollary 1.7]{Gu16}, there is a non-zero polynomial $P$ of degree at most $D$ which is a product of non-singular polynomials
such that the set $\R^3\setminus Z(P)$ is a disjoint union of $\sim D^3$ cells $O_i$ such that,
for every $i,$
\begin{equation}\label{D}
\int_{O_i\cap B_R} (Br_{\alpha} \E f)^{3.25}\sim D^{-3}\int_{B_R} (Br_{\alpha} \E f)^{3.25}.
\end{equation}

We next define $W$ as the $R^{1/2+\delta}$ neighborhood of $Z(P)$ and put $O_i':=(O_i\cap B_R)\setminus W.$

Moreover,  note that if we apply Proposition \ref{packets}  to $f_\tau$ in place of $f$ (what we shall usually do), then by property (a) in Proposition \ref{packets}, for every tube $T\in \T$ the function $f_{\tau,T}$ is supported in an $O(R^{-1/2})$ neighborhood of $\tau.$  Following Guth, we define
$$
\T_i:=\{T\in\T:  T\cap O_i'\ne\emptyset\},\quad f_{\tau,i}:=\sum_{T\in\T_i}
f_{\tau,T},\quad f_{\Delta,i}:=\sum_{\tau\subset \Delta} f_{\tau,i} \quad \text{and}\quad f_i:=\sum_\tau f_{\tau,i}.
$$
Then we can use the following analogue to  \cite[Lemma 3.2]{Gu16}:
\begin{lemma} \label{lemma3.2}
Each tube $T\in\T$ lies in at most $D+1$ of the sets $\T_i.$
\end{lemma}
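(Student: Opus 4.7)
The plan is to follow Guth's standard line-vs-cells argument (cf.\ \cite[Lemma~3.2]{Gu16}) and reduce the multi-cell count to the fundamental theorem of algebra applied to the polynomial $P$ restricted to the axis of $T.$ No feature specific to our hyperbolic setting is needed here; the only inputs are that $Z(P)$ has algebraic degree at most $D$ and that the tube radius is chosen equal to the wall thickness $R^{1/2+\delta}.$

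First I would fix $T\in\T$ with axis $\ell$ (a line segment of length $R$ parallel to $\nu(T)$), and suppose that $T\in\T_i$ for some index $i.$ By definition there exists $p\in T\cap O_i',$ so in particular $p\in O_i$ and $\dist(p,Z(P))>R^{1/2+\delta}.$ Let $q$ denote the orthogonal projection of $p$ onto $\ell;$ since $T$ is a cylinder of radius $R^{1/2+\delta}$ around $\ell,$ we have $q\in\ell$ and $|p-q|\le R^{1/2+\delta}.$ Every point of the segment $[p,q]$ is then within distance $R^{1/2+\delta}$ of $p,$ hence lies outside $Z(P).$ Thus $[p,q]$ is contained in a single connected component of $\R^3\setminus Z(P),$ which must be $O_i,$ and so $q\in\ell\cap O_i.$ This is the key geometric observation: the axis of any tube contributing to $\T_i$ actually enters the cell $O_i.$

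To conclude, suppose $T$ belongs to $\T_{i_1},\dots,\T_{i_k}$ for pairwise distinct indices. By the preceding paragraph the segment $\ell$ meets all $k$ cells $O_{i_1},\dots,O_{i_k}.$ Parametrize $\ell$ by an affine parameter $t\in[0,R];$ then the restriction $P|_\ell$ becomes a univariate polynomial of degree at most $\deg P\le D.$ Since $\ell\cap O_{i_1}\ne\emptyset,$ the polynomial $P|_\ell$ is not identically zero, hence has at most $D$ real roots. Therefore $\ell\setminus Z(P)$ decomposes into at most $D+1$ open subintervals, each of which, being connected and disjoint from $Z(P),$ must lie in a single cell $O_j.$ This forces $k\le D+1,$ which is the claim.

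The main (and essentially only) subtlety is the projection step in the second paragraph; it is only available because the wall thickness defining $W$ has been chosen to coincide with the tube radius $R^{1/2+\delta}.$ Once that is in place, the argument is pure one-variable polynomial counting and does not interact at all with the geometric features of $\phi$ or with the broadness notion based on $\bar{\mathcal L}.$
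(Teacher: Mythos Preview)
Your proof is correct and is precisely Guth's argument from \cite[Lemma~3.2]{Gu16}, which is exactly what the paper invokes (the paper gives no separate proof and simply cites Guth). The only step to double-check is the boundary inequality in the projection step, but with the usual convention that $W$ is the closed $R^{1/2+\delta}$-neighborhood (so $p\notin W$ gives a strict inequality $\dist(p,Z(P))>R^{1/2+\delta}$) your deduction that $[p,q]$ avoids $Z(P)$ goes through as written.
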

We cover $B_R$ with $\sim R^{3\delta}$ balls $B_j$ of radius $R^{1-\delta}.$
Recall Definitions  3.3 and 3.4 from \cite{Gu16}:
\begin{defns} {\rm
a) We define $\T_{j,tang}$ as the set of  all tubes $T\in\T$ that satisfy the following conditions:
$$T\cap W\cap B_j\ne\emptyset,$$
and  if $\xi\in Z(P)$ is any  nonsingular point (i.e., $\nabla P(\xi)\ne 0$) lying in $2B_j\cap 10T,$ then
$$
{\rm angle}(\nu(T), T_\xi Z(P))\le R^{-1/2+2\delta}.
$$
Here, $T_\xi Z(P)$ denotes the tangent space to $Z(P)$ at $\xi,$ and we recall that $\nu(T)$ denotes the unit vector in direction of $T.$
Accordingly, we define
$$ f_{\tau,j,tang}:=\sum_{T\in\T_{j,tang}}f_{\tau,T}\quad\text{and}\quad
f_{j,tang}:=\sum_\tau
f_{\tau,j,tang}.
$$
b) We define $\T_{j,trans}$ as the set of  all tubes $T\in\T$ that satisfy the following conditions:
$$T\cap W\cap B_j\ne\emptyset,
$$
and there exists a nonsingular point $\zeta\in Z(P)$ lying in $2B_j\cap 10T,$ so that
$$
{\rm angle}(\nu(T), T_\zeta Z(P))> R^{-1/2+2\delta}.
$$
Accordingly, we define
$$ f_{\tau,j,trans}:=\sum_{T\in\T_{j,trans}}f_{\tau,T}\quad\text{and}\quad
f_{j,trans}:=\sum_\tau f_{\tau,j,trans}.
$$}
\end{defns}
We also recall Lemmas 3.5 and 3.6 in \cite{Gu16}:
\begin{lemma}\label{lemma3.5}
Each tube $T\in\T$ belongs to at most ${\rm Poly}(D)=R^{O(\delta_{deg})}$ different sets
$\T_{j,trans}.$
\end{lemma}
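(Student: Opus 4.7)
My plan is to adapt, essentially verbatim, the algebraic-geometric proof of the analogous statement \cite[Lemma~3.6]{Gu16}, since nothing in the present formulation of the lemma actually depends on the phase $\phi$: the objects involved are the polynomial $P$ (of degree $\le D$), the tube $T$ (with direction $\nu(T)$, length $R$ and radius $R^{1/2+\delta}$), and the balls $B_j$ of radius $R^{1-\delta}$ covering $B_R$. The authors have already flagged that the polynomial partitioning steps are reproduced from Guth with only cosmetic changes, so here I would confine myself to indicating the structure of the argument.

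Fix $T\in\T$ and let $\ell$ denote its core line, which points in direction $\nu(T)$. After a negligible perturbation we may assume $\ell\not\subset Z(P)$, so that $P|_\ell$ is a non-zero univariate polynomial of degree at most $D$ and hence has at most $D$ real zeros. I would then show that each index $j$ with $T\in\T_{j,trans}$ gives rise to (at least) one such zero $\eta_j \in Z(P)\cap \ell$ lying inside a slight enlargement $C\,B_j$ of $B_j$. The assignment $j\mapsto \eta_j$ is at most $O(1)$-to-one, by the bounded overlap of the cover $\{B_j\}$ and the bound $|Z(P)\cap\ell|\le D$; hence $|\{j:T\in\T_{j,trans}\}|\lesssim D = R^{\delta_{deg}} = R^{O(\delta_{deg})}$, as desired.

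The construction of $\eta_j$ is the heart of the matter. Pick the transverse non-singular point $\zeta_j\in Z(P)\cap 10T\cap 2B_j$ guaranteed by the definition of $\T_{j,trans}$. The angle bound $\mathrm{angle}(\nu(T),T_{\zeta_j}Z(P))>R^{-1/2+2\delta}$ translates into the quantitative transversality estimate
\begin{equation*}
|\partial_{\nu(T)} P(\zeta_j)|\gtrsim R^{-1/2+2\delta}\,|\nabla P(\zeta_j)|.
\end{equation*}
Together with the polynomial bounds on the higher derivatives of $P$ (valid because $\deg P\le D$), a standard quantitative implicit function / Newton iteration applied to $P(\zeta_j+ t\nu(T)+sv)=0$, where $sv$ is the perpendicular displacement from the line through $\zeta_j$ parallel to $\nu(T)$ onto $\ell$, converges for $|s|\le 10 R^{1/2+\delta}$ (the tube radius) and produces a zero $\eta_j\in Z(P)\cap \ell$ within distance $\lesssim R^{1/2+\delta}\cdot R^{1/2-2\delta}=R^{1-\delta}$ of the orthogonal projection of $\zeta_j$ onto $\ell$. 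Since $B_j$ has radius $R^{1-\delta}$, this $\eta_j$ lies in a bounded dilate of $B_j$, as required for the pigeonholing step.

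The step requiring the most care is verifying that the Newton iteration actually closes over the full transverse scale $10\,R^{1/2+\delta}$, i.e.\ that $R^{-1/2+2\delta}$ is still comfortably above the critical angle threshold at which the higher-derivative bounds on $P$ would destroy convergence. Since $D=R^{\delta_{deg}}$ with $\delta_{deg}\ll \delta$, the factor $D$ of slack built into the derivative estimates for polynomials of degree $\le D$ is more than enough, and Guth's calculation from \cite{Gu16} carries over unchanged. All remaining bookkeeping — bounded overlap of $\{B_j\}$, $|Z(P)\cap\ell|\le D$, and the pigeonhole — is soft.
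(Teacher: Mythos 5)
The paper itself does not reprove this lemma: it simply cites \cite[Lemma 3.5]{Gu16} (not Lemma 3.6 — you have the reference off by one; Guth's Lemma 3.6 is the tangential counting statement, which is the paper's Lemma \ref{lemma3.6}). Your framing is correct as far as it goes: the statement is a purely algebraic-geometric fact about the polynomial $P$, the tube $T$, and the balls $B_j$, with no dependence on the phase $\phi$, so borrowing Guth's proof wholesale is exactly the right move. What is not right is your reconstruction of what that proof is.

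Your mechanism — pick the transverse nonsingular point $\zeta_j\in Z(P)\cap 10T\cap 2B_j$, run a quantitative implicit function theorem in the perpendicular variable $s$ to continue $Z(P)$ from $\zeta_j$ all the way to the core line $\ell$, and thereby produce a zero $\eta_j$ of $P|_\ell$ — is not Guth's argument, and as stated it has a real gap. The implicit function / Newton step only survives as long as $\partial_{\nu(T)}P$ stays bounded away from zero along the path, and the angle hypothesis gives you a lower bound \emph{only at the single point} $\zeta_j$, with no control whatever over the ratio $|\partial^2 P|/|\partial_{\nu(T)}P|$ as you move a macroscopic distance $\sim R^{1/2+\delta}$ towards $\ell$. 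Markov/Bernstein inequalities for a degree-$D$ polynomial are sup-norm statements and do not prevent $\partial_{\nu(T)}P$ from vanishing along the way, i.e.\ they do not prevent the sheet of $Z(P)$ through $\zeta_j$ from folding back and leaving the tube without ever meeting $\ell$. So the intermediate claim that every transverse $j$ produces a zero of $P|_\ell$ in $CB_j$ is unjustified, and the appeal to ``Guth's calculation carries over unchanged'' is circular, because Guth's calculation is a different one.

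Guth's actual proof (his Lemma 5.7, proved in \cite[Section 5]{Gu16}) does not touch the core line at all. It works with the semi-algebraic set of transverse nonsingular points of $Z(P)$ inside the cylinder $T$ and combines two facts: (a) a Milnor--Thom/Barone--Basu type bound showing that this set has $\lesssim D^3={\rm Poly}(D)$ connected components, and (b) a diameter estimate (using critical points of the distance-to-$\ell$ function restricted to $Z(P)$, again bounded in number via B\'ezout) showing that each such component has extent $\lesssim r\alpha^{-1}\sim R^{1/2+\delta}\cdot R^{1/2-2\delta}=R^{1-\delta}$ along $\ell$, hence meets only $O(1)$ of the balls $B_j$. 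If you want a self-contained argument, that is the one to write down; a fix within the Newton-iteration framework would require a separate degree-counting argument for the set $\{P=0,\ \partial_{\nu(T)}P=0\}$ to control fold-backs, which would in effect reconstruct (b) anyway.
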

\begin{lemma}\label{lemma3.6}
For each $j,$ the number  of different $\theta$ so that
$\T_{j,tang}\cap\T(\theta)\ne\emptyset$ is at most $R^{1/2+O(\delta)}.$
\end{lemma}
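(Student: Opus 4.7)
The plan is to adapt the proof of the corresponding Lemma 3.6 in \cite{Gu16} to our hyperbolic setting. Guth's argument has two essentially independent parts: first, a reduction to an angular count on $S^2$ via the Gauss map of the surface, and second, an algebraic--geometric area bound for the set of ``tangent'' directions. The second part is purely a statement about $Z(P)$ and transfers unchanged; only the first part needs to be re-examined for general hyperbolic surfaces, which is where hyperbolicity of $\phi$ enters.

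First, the unit normal at $\omega\in\Sigma$ is parallel to $(-\nabla\phi(\omega),-1)$, so the differential of the map $\omega\mapsto\nu(\omega)$ is, up to a harmless nonvanishing factor, the Hessian $D^2\phi(\omega)$. Since $\det D^2\phi(\omega)=-H(\omega)$ with $|H-1|\le 10^{-4}$ on $\Sigma$ by \eqref{phiest}, this map is a bi-Lipschitz diffeomorphism of $\Sigma$ onto its image in $S^2$ with constants $\sim 1$. It follows that the $R^{-1/2}$-spaced centers $\omega_\theta$ are sent to $R^{-1/2}$-angularly separated points $\nu(\theta)$, so it suffices to bound by $R^{-1/2+O(\delta)}$ the surface area on $S^2$ of the set
\begin{equation*}
E_j:=\bigl\{v\in S^2:\ \exists\, T\in\T_{j,tang}\text{ with }\nu(T)=v\bigr\}.
\end{equation*}

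For the second part, I would use that $V:=Z(P)\cap 2B_j$ is a real algebraic surface of degree at most $D=R^{\delta_{deg}}$. Covering $V$ by $D^{O(1)}$ ``approximately flat'' pieces (via a Wongkew-type bound on the surface area of a bounded-degree variety intersected with a ball), and noting that the tangentiality condition forces $\nu(T)$ to lie in an $R^{-1/2+2\delta}$-angular band around the direction plane of the pertinent flat piece, one obtains an area bound of $D^{O(1)}\cdot R^{-1/2+2\delta}=R^{-1/2+O(\delta)}$ for $E_j$, since $\delta_{deg}\ll\delta$. The main obstacle is precisely this covering argument, but it is a black-box input about real algebraic varieties of bounded degree and does not depend on the phase function $\phi$; hence it transfers verbatim from \cite{Gu16}. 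Thus no genuinely new difficulty arises for our class $\Hyp^M$ beyond the hyperbolicity-based Gauss map reduction in the previous paragraph.
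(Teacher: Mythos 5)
Your proposal is correct and follows the paper's approach: the paper does not reprove the lemma but simply cites \cite[Lemma 3.6]{Gu16} together with the remark that the argument transfers because the Gaussian curvature of $S_\phi$ is nonvanishing, so the Gauss map is a diffeomorphism onto its image, which is exactly your first-paragraph observation. Your second paragraph correctly identifies the algebraic-geometric part (the area bound for tangent directions to $Z(P)$) as independent of $\phi$, so nothing more is needed.
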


Note that the previous lemma makes use of the fact that the Gaussian curvature does not vanish on the surface $\Sigma,$ so that the Gau\ss{} map is a diffeomorphism onto its image.

\begin{lemma}\label{lemma3.7}
If $\xi \in O_i'.$ Then, given  our assumptions on $R$  from Remarks \ref{rem4.1}, we have
$$
Br_\alpha \E f(\xi)\le  Br_{2\alpha} \E f_i(\xi)+R^{-900}\sum_\tau\|f_\tau\|_2.
$$
\end{lemma}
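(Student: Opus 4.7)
The plan is to follow the strategy of \cite[Lemma 3.7]{Gu16}, exploiting the wave packet decomposition to show that at a point $\xi\in O_i'$ the full extension $\E f(\xi)$ coincides, up to a rapidly decaying error, with $\E f_i(\xi)$, so that $\alpha$-broadness of $\xi$ for $\E f$ transfers to $2\alpha$-broadness for $\E f_i$.

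The key geometric observation is: if $T\notin\T_i$, then by definition $T\cap O_i'=\emptyset$, so in particular $\xi\notin T$. Applying Proposition \ref{packets}(b), we get $|\E f_{\tau,T}(\xi)|\le R^{-1000}\|f_\tau\|_2$ for every $T\notin\T_i$. Since the total number of wave packets in $\T$ is polynomial in $R$, summing over $T\notin\T_i$ and using Proposition \ref{packets}(c) to replace $f_\tau$ by $\sum_T f_{\tau,T}$, I obtain
$$
|\E f_\tau(\xi)-\E f_{\tau,i}(\xi)|\le R^{-950}\|f_\tau\|_2.
$$
Summing over $\tau$ gives $|\E f(\xi)-\E f_i(\xi)|\le E:=R^{-950}\sum_\tau\|f_\tau\|_2$, and for each $\Delta\in\overline{\mathcal L}$ the same argument (restricting the sum to $\tau\subset\Delta$) yields $|\E f_\Delta(\xi)-\E f_{\Delta,i}(\xi)|\le E$.

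Now I split into two cases. If $|\E f(\xi)|\le R^{-900}\sum_\tau\|f_\tau\|_2$, the claimed inequality is immediate since $Br_\alpha\E f(\xi)\le|\E f(\xi)|$ is already absorbed by the error term on the right-hand side. Otherwise $|\E f(\xi)|>R^{-900}\sum_\tau\|f_\tau\|_2$, so $E/|\E f(\xi)|<R^{-50}$. Assuming $\xi$ is $\alpha$-broad for $\E f$, for any $\Delta\in\overline{\mathcal L}$ we then have
$$
|\E f_{\Delta,i}(\xi)|\le|\E f_\Delta(\xi)|+E\le\alpha|\E f(\xi)|+E,
$$
while $|\E f_i(\xi)|\ge|\E f(\xi)|-E$. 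Using $\alpha\ge K^{-\ve}$ together with Remark \ref{rem4.1}(b), which guarantees that $R$ is tremendously larger than $K$ and in particular $R^{-50}\ll\alpha$, the inequality $\alpha|\E f(\xi)|+E\le 2\alpha(|\E f(\xi)|-E)$ follows. Hence $|\E f_{\Delta,i}(\xi)|\le 2\alpha|\E f_i(\xi)|$ for every $\Delta$, so $\xi$ is $2\alpha$-broad for $\E f_i$ and $Br_{2\alpha}\E f_i(\xi)=|\E f_i(\xi)|$. Combining with $|\E f(\xi)|\le|\E f_i(\xi)|+E$ and $E\le R^{-900}\sum_\tau\|f_\tau\|_2$ completes the proof.

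The routine arithmetic of comparing errors against broadness thresholds is the only delicate point; the main conceptual input is the simple observation that wave packets whose tubes avoid the cell $O_i'$ contribute negligibly at a point in $O_i'$, thanks to the rapid decay in Proposition \ref{packets}(b). There is no real obstacle here, provided one keeps track of the assumption $R\gg e^{K^{\dots}}$ from Remark \ref{rem4.1}(b), which is what makes $R^{-50}$ dwarfed by $\alpha\ge K^{-\ve}$.
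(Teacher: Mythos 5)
Your proof is correct and follows the standard Guth-style argument that the paper itself defers to (the paper simply cites Lemma 4.7 of \cite{bmvp20b}, whose proof is exactly this: tubes avoiding $O_i'$ contribute $O(R^{-1000})$ at $\xi\in O_i'$ by Proposition \ref{packets}(b), so $\E f$ and $\E f_i$ (and likewise $\E f_\Delta$ and $\E f_{\Delta,i}$) differ only by a negligible error, after which the transfer of $\alpha$-broadness to $2\alpha$-broadness is elementary arithmetic using $R\gg K$). No gaps.
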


\begin{proof}
This is analogous to \cite[Lemma 4.7]{bmvp20b}. The proof of that lemma, without any changes, also gives us Lemma \ref{lemma3.7}.
\end{proof}

\medskip

The remaining part  of this subsection will be  devoted to the proof of the following crucial analogue to  the key Lemma 3.8 in \cite{Gu16}:
\begin{lemma}\label{lemma3.8}
If $\xi\in B_j\cap W$ and  $\alpha\mu\le 10^{-5},$ then
\begin{equation}\label{crucialbroad}
Br_\alpha\E f(\xi)\le 2\bigg(\sum_IBr_{K^{4\bar N\epsilon'}\alpha}\E f_{I,j,trans}(\xi)+K^{100} {\rm Bil}(\E
f_{j,tang})(\xi)+R^{-900}\sum_\tau\|f_\tau\|_2\bigg),
\end{equation}
where the first sum is over all possible subsets $I$ of the given family of caps $\tau.$
\end{lemma}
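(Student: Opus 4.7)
The plan is to follow the template of \cite[Lemma~3.8]{Gu16}, replacing the two coordinate strips of the unperturbed saddle by the richer family $\bar{\mathcal L}$ and using the Geometric Lemma \ref{geometric} to keep the number of exceptional rectangles under control. The first step is a wave-packet cleanup: for $\xi\in B_j\cap W$, any tube $T$ containing $\xi$ automatically meets $W\cap B_j$ and therefore belongs to $\T_{j,\mathrm{tang}}\cup\T_{j,\mathrm{trans}}$, while by Proposition \ref{packets}(b) wave packets of tubes not containing $\xi$ contribute at most $O(R^{-1000}\|f_\tau\|_2)$. Summing over $\tau$ via Proposition \ref{packets}(c) yields
$$\E f(\xi)=\E f_{j,\mathrm{tang}}(\xi)+\E f_{j,\mathrm{trans}}(\xi)+O\!\left(R^{-1000}\sum_\tau\|f_\tau\|_2\right).$$
We may assume $\xi$ is $\alpha$-broad (else the LHS vanishes), so the triangle inequality places us in one of two cases: (A) $|\E f_{j,\mathrm{tang}}(\xi)|\ge\tfrac{1}{3}|\E f(\xi)|$, or (B) $|\E f_{j,\mathrm{trans}}(\xi)|\ge\tfrac{1}{3}|\E f(\xi)|$.

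In case (A), call a cap $\tau$ \emph{significant} if $|\E f_{\tau,j,\mathrm{tang}}(\xi)|\ge K^{-50}|\E f(\xi)|$; since $\mu\le K^{\epsilon/2}$ by Remark \ref{rem4.1} and there are at most $\mu K^2$ caps, the insignificant ones contribute $\ll|\E f(\xi)|$ in total. If any two significant caps are strongly separated, the definition of $\mathrm{Bil}$ gives $K^{100}\mathrm{Bil}(\E f_{j,\mathrm{tang}})(\xi)\gtrsim K^{50}|\E f(\xi)|$ and we are done. Otherwise the Geometric Lemma \ref{geometric} produces a subfamily $\mathcal L_0\subset\mathcal L\subset\bar{\mathcal L}$ with $|\mathcal L_0|\le K^{3\epsilon'}$ whose union covers all significant caps; by pigeonhole some $L\in\mathcal L_0$ satisfies $|\E f_{L,j,\mathrm{tang}}(\xi)|\gtrsim K^{-3\epsilon'}|\E f(\xi)|$. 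But $\xi$ is $\alpha$-broad for $\E f$ with $\alpha\le K^{-\epsilon/2}\ll K^{-3\epsilon'}$ by \eqref{epsprime}, so $|\E f_L(\xi)|\le\alpha|\E f(\xi)|$ is negligible next to $|\E f_{L,j,\mathrm{tang}}(\xi)|$, and the wave-packet identity forces $|\E f_{L,j,\mathrm{trans}}(\xi)|\gtrsim K^{-3\epsilon'}|\E f(\xi)|$. Thus the transversal part fails broadness at the specific $\Delta_0:=L\in\bar{\mathcal L}$, reducing case (A) to a concentrated form of case (B).

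In case (B), we first try $I:=$ all caps: if $\xi$ is $K^{4\bar N\epsilon'}\alpha$-broad for $\E f_{j,\mathrm{trans}}$ we are done. If not, some $\Delta_0\in\bar{\mathcal L}$ violates broadness, and we pass to $I_1:=\{\tau:\tau\not\subset\Delta_0\}$; since $K^{4\bar N\epsilon'}\alpha\ll 1$, the residual mass $|\E f_{I_1,j,\mathrm{trans}}(\xi)|$ is still comparable to $|\E f_{j,\mathrm{trans}}(\xi)|$. We iterate, discarding a new bad $\Delta_k\in\bar{\mathcal L}$ at each round. By Remark \ref{fambarprop} every cap lies in at most $\bar N$ elements of $\bar{\mathcal L}$, so after at most $\bar N$ rounds no surviving cap can still sit inside a further bad $\Delta$, and we exit with the desired broadness statement on the final subset $I$. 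Each round costs a factor $K^{4\epsilon'}$ in the broadness constant, giving $K^{4\bar N\epsilon'}\alpha$ overall; the formal sum over all subsets $I$ in the statement is just a convenient vehicle for the one (or at most two) $I$'s actually produced by the argument.

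The main obstacle I anticipate is the iteration in case (B): one must check that after discarding the caps of a bad strip, any further bad $\Delta_i$ for the restricted transversal extension is essentially a new element of $\bar{\mathcal L}$, so that Remark \ref{fambarprop} really caps the iteration length at $\bar N$ rather than allowing the same $\Delta$ to resurface. Keeping the broadness loss additive in the exponent (rather than multiplicative or worse) requires careful bookkeeping of the residual transversal mass at each round, and the hypothesis $\alpha\mu\le 10^{-5}$ is exactly what makes this bookkeeping self-consistent.
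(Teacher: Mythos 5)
Your proposal takes a genuinely different route from the paper's, and unfortunately the route has fatal gaps in both branches.

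The paper does not split according to whether the tangential or transversal part of $\E f$ dominates. Instead it fixes $I:=\{\tau: |\E f_{\tau,j,tang}(\xi)|\le K^{-100}|\E f(\xi)|\}$ and dichotomizes on whether $I^c$ contains a strongly separated pair. If it does, the bilinear term wins. If not, the Geometric Lemma gives a cover $\mathcal L(\xi)$ of $I^c$ of size $\le K^{3\ve'}$, and setting $J$ to be the caps contained in some $L\in\mathcal L(\xi)$, one controls $\sum_{\tau\in J}\E f_\tau(\xi)$ by inclusion--exclusion over $\mathcal L(\xi)$ combined with the $\alpha$-broadness of $\E f$ itself (note $f_{L_1\cap\cdots\cap L_k}=f_\Delta$ with $\Delta\in\bar{\mathcal L}$). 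This forces $|\E f(\xi)|\lesssim |\E f_{J^c,j,trans}(\xi)|$, and then one shows in a single pass that $\xi$ is $K^{4\bar N\ve'}\alpha$-broad for $\E f_{J^c,j,trans}$: for any $\Delta\in\bar{\mathcal L}$, $\E f_{\tau,j,trans}$ is rewritten as $\E f_\tau-\E f_{\tau,j,tang}+O(\text{neglig})$, the tangential terms are small because $J^c\subset I$, and $\sum_{\tau\in\tilde\Delta\cap J^c}\E f_\tau$ is controlled again via inclusion--exclusion and the $\alpha$-broadness of $\E f$ applied to intersections $\Delta\cap L_1'\cap\cdots\cap L_k'\in\bar{\mathcal L}$. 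No iteration over bad $\Delta$'s is ever needed, because broadness of the original $\E f$ already controls every $\Delta\in\bar{\mathcal L}$ simultaneously.

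Your case (A) does not close: after concluding $|\E f_{L,j,trans}(\xi)|\gtrsim K^{-3\ve'}|\E f(\xi)|$ for a single $L$, you have a large transversal contribution concentrated on one strip, which is precisely \emph{not} broadness of the transversal part; "reducing to a concentrated form of case (B)" does not manufacture the required broad term. Also, the pigeonhole in (A) needs care because the tangential contributions of significant caps could cancel; the paper avoids this by bounding the \emph{complement} rather than the significant caps directly.

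Your case (B) iteration does not terminate as claimed. Remark~\ref{fambarprop} bounds only the pointwise overlap multiplicity of $\bar{\mathcal L}$, not the total number of elements, so there is no bound of $\bar N$ on the number of rounds: each new bad $\Delta_k$ can live over a different region of $\Sigma$, and fresh caps get discarded each time. Worse, the residual mass is not preserved: if $\Delta_0$ violates $K^{4\bar N\ve'}\alpha$-broadness for $\E f_{j,trans}$, you only learn $|\E f_{\Delta_0,j,trans}(\xi)|>K^{4\bar N\ve'}\alpha|\E f_{j,trans}(\xi)|$, which is a \emph{lower} bound on what $\Delta_0$ carries, not an upper bound; $\Delta_0$ could carry essentially all of it. So "the residual mass is still comparable" is exactly the opposite of what a failed broadness test tells you, and the bookkeeping you flagged as the main obstacle is in fact insurmountable in this form. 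The missing idea is the one the paper uses throughout: leverage the $\alpha$-broadness of the \emph{full} $\E f$ (which holds for every $\Delta\in\bar{\mathcal L}$, including all intersections) rather than trying to bootstrap broadness of the transversal part by deleting bad strips one at a time.
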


\begin{proof}

Let $\xi\in B_j\cap W. $ We may assume that $\xi$ is $\alpha$-broad for $\E f$ and that  $|\E
f(\xi)|\ge
R^{-900}\sum_\tau\|f\|_2.$ Let
\begin{equation}\label{I}
I:=\{\tau: |\E f_{\tau,j,tang}(\xi)|\le K^{-100} |\E f(\xi)|\}.
\end{equation}
We consider two possible cases:
\smallskip

\noindent{\bf Case 1:}  $I^c$ contains two strongly separated caps $\tau_1$ and
$\tau_2.$ Then trivially
\begin{equation}\nonumber%\label{bili}
|\E f(\xi)|\le K^{100}|\E f_{\tau_1,j,tang}(\xi)|^{1/2}|\E f_{\tau_2,j,tang}(\xi)|^{1/2}\le
K^{100}{\rm Bil}(\E f_{j,tan})(\xi),
\end{equation}
hence \eqref{crucialbroad}.

\smallskip
\noindent{\bf Case 2:}  $I^c$ does not contain two strongly separated caps.
\smallskip

We  denote by $\mathcal L(\xi)\subset \mathcal L$ the family of at most  $ K^{3\epsilon'}$ strips  respectively large caps given by the Geometric Lemma  for the family of caps $\F:=I^c.$ 
By
$$
J:=\{\tau\,:\; \tau\subset L \text{ for some } L\in\mathcal L(\xi)\}
$$
we denote     the corresponding subset of caps $\tau.$ Then $I^c\subset J,$ i.e.,  $J^c\subset I.$ We write
\begin{equation}\nonumber
	f=\sum_{\tau\in J} f_\tau+\sum_{\tau\in J^c}f_\tau.
\end{equation}
Hence,
$$
|\E f(\xi)|\leq |\sum_{\tau\in J}\E f_{\tau}(\xi)|+|\sum_{\tau\in J^c}\E f_\tau(\xi)|.
$$
For $L\in\mathcal L,$ we denote by $\tilde L:=\{\tau\,:\;\tau\subset L\}.$ Note that $f_L=\sum_{\tau\in\tilde L}f_\tau,$
and
$$
J=\bigcup_{L\in\mathcal L(\xi)} \tilde L.
$$
Thus, by the inclusion-exclusion principle and Remark \ref{fambarprop},
\begin{equation}\label{inclusexcluse}
\chi_J=\sum_{k=1}^{\overline N}(-1)^{k+1}\sum_{L_1,\dots,L_k\in \mathcal L(\xi)}\chi_{\tilde L_1\cap\cdots\cap\tilde L_k}.
\end{equation}
Therefore,
$$
|\sum_{\tau\in J}\E f_\tau(\xi)|\le \sum_{k=1}^{\bar N}\sum_{L_1,\dots,L_k\in \mathcal L(\xi)}|\sum_{\tau\in \tilde L_1\cap\cdots\cap\tilde L_k}\E f_\tau(\xi)|=\sum_{k=1}^{\bar N}\sum_{L_1,\dots,L_k\in \mathcal L(\xi)}|\E f_{L_1\cap\cdots\cap L_k}(\xi)|.
$$
Since $\xi$ is $\alpha$-broad, and  since according to Remark \ref{rem4.1} we  may assume that $\alpha\le K^{-\epsilon/2},$  this can be further estimated by \
\begin{equation}\nonumber%\label{broad1}
\le\sum _{k=1}^{\bar N}|\mathcal L(\xi)|^k \alpha|\E f(\xi)|
\le \bar N (K^{3\ve'})^{\bar N} \alpha|\E f(\xi)|
\le \bar N K^{3\ve'\bar N} K^{-\ve/2} |\E f(\xi)|
\le\frac1{10}|\E f(\xi)|,
\end{equation}
 as one can easily see by our choices of  $K$ in \eqref{KD} and $\ve'$ in \eqref{epsprime}, provided $\ve>0$ is assumed to be  sufficiently small.
Thus,
$$
|\E f(\xi)|\le \frac1{10}|\E f(\xi)|+|\sum_{\tau\in J^c}\E f_\tau(\xi)|,
$$
and therefore
$$
|\E f(\xi)|\le \frac{10}9 |\sum_{\tau\in J^c}\E f_\tau(\xi)|.
$$
Since $\xi\in B_j\cap W,$ by Proposition \ref{packets},
\begin{equation}\label{4.21n}
	\E f_\tau(\xi)=\E f_{\tau,j,trans}(\xi)+\E f_{\tau,j,tang}(\xi)+ O(R^{-1000})\|f_\tau\|_2.
\end{equation}

Moreover, since $J^c\subset I,$ and  since there are at most $K^2$ caps $\tau,$
\begin{equation}\label{tangesti}
\sum_{\tau\in J^c}|\E f_{\tau,j,tang}(\xi)|\le\sum_{\tau\in I}|\E f_{\tau,j,tang}(\xi)|\le
K^{-100}\sum_{\tau\in I}|\E f(\xi)|\le K^{-98}|\E f(\xi)|,
\end{equation}
where the second inequality is a consequence of the definition of $I.$
Thus,
\begin{eqnarray*}
\frac 9{10}|\E f(\xi)|&\le& |\sum_{\tau\in J^c}\E f_{\tau,j,trans}(\xi)|+K^{-98}|\E
f(\xi)|+\sum_\tau R^{-1000}\|f_\tau\|_2\\
&=&|\E f_{J^c,j,trans}(\xi)|+K^{-98}|\E f(\xi)|+\sum_\tau R^{-1000}\|f_\tau\|_2,
\end{eqnarray*}
and hence, since $|\E f(\xi)|\ge R^{-900}\sum\|f_\tau\|_2,$
\begin{equation}\label{11-9}
|\E f(\xi)|\le\frac{11}9|\E f_{J^c,j,trans}(\xi)|.
\end{equation}
\medskip
It will then finally suffice to show  that $\xi$ is $K^{4\bar N\epsilon'}\alpha$-broad  for $\E g,$ where  $g:= f_{J^c,j,trans}.$
To this end let us set $g_\tau:=f_{\tau,j,trans},$ if $\tau\in J^c,$ and zero otherwise,  so that 
$$g=\sum g_\tau.$$

In what follows, we shall use  the short hand notation ``$\rm neglig$''  for terms which are much smaller than
$R^{-940}\sum_\tau\|f_\tau\|_2.$ 

Observe first that  by \eqref{4.21n}
$$
|\E f_{\tau,j,trans}(\xi)|\le|\E f_\tau(\xi)|+|\E f_{\tau,j,tang}(\xi)|+{\rm neglig},
$$
so that  if $\tau\in J^c\subset I,$ then by the definition of $I,$
\begin{equation}\nonumber%\label{transesti}
|\E f_{\tau,j,trans}(\xi)|\le|\E f_\tau(\xi)|+K^{-100}|\E f(\xi)|+{\rm neglig}.
\end{equation}

\medskip

We have to show that
$$
|\E g_{\Delta}(\xi)|\le  K^{4\bar N\epsilon'}\alpha |\E g(\xi)|
$$
for all  $\Delta\in\overline{\mathcal L}.$  Write $\Delta=L_1\cap\cdots\cap L_r,$ where $L_i\in \mathcal L,$ and set $\tilde \Delta:=\tilde L_1\cap\cdots\cap\tilde L_r,$ so that $g_\Delta=\sum_{\tau\in \tilde \Delta}g_\tau=\sum_{\tau\in \tilde \Delta\cap J^c}f_{\tau,j,trans}.$ Therefore the following two cases can arise:
\smallskip

\begin{enumerate}
	\item There is some $i=1,\dots,r,$ such that $L_i\in \mathcal L(\xi).$ Then, $\tilde \Delta\subset\tilde L_i\subset J,$ hence, $\tilde \Delta\cap J^c=\emptyset.$
	\item For all $i=1,\dots,r,$ $L_i\notin \mathcal L(\xi).$
\end{enumerate}

Observe first that by summing \eqref{4.21n} over all $\tau\in\tilde \Delta\cap J^c$ we obtain
\begin{equation}\label{ongdelta}
|\E g_{\Delta}(\xi)|=|\sum_{\tau\in\tilde \Delta}\E g_\tau|\leq|\sum_{\tau\in \tilde \Delta\cap J^c}\E f_{\tau}(\xi)|+\sum_{\tau\in \tilde \Delta\cap J^c}|\E f_{\tau,j,tang}(\xi)|+{\rm neglig}.
\end{equation}
	
By \eqref{tangesti}, the second term can again be estimated  by
$$
	\sum_{\tau\in \tilde \Delta \cap J^c}|\E f_{\tau,j,tang}(\xi)|
	\leq K^{-98}|\E f(\xi)|.
$$
Case (i) is thus trivial.  In case (ii), we write
$$
\sum_{\tau\in \tilde \Delta\cap J^c}\E f_{\tau}(\xi)=\E f_{\Delta}(\xi)-\sum_{\tau\in \tilde \Delta\cap J} \E f_{\tau}(\xi).
$$
The first term is estimated using broadness. For the second term,  again by \eqref{inclusexcluse},

\begin{eqnarray*}
|\sum_{\tau\in\tilde \Delta\cap J}\E f_\tau(\xi)|&\le&
\sum_{k=1}^{\bar N}\sum_{L'_1,\dots,L'_k\in \mathcal L(\xi)}|\sum_{\tau\in \tilde \Delta\cap\tilde L'_1\cap\cdots\cap\tilde L'_k}\E f_\tau(\xi)| \\
&=&\sum_{k=1}^{\bar N}\sum_{L'_1,\dots,L'_k\in \mathcal L(\xi)}|\E f_{\Delta\cap L'_1\cap\cdots\cap L'_k}(\xi)|.
\end{eqnarray*}

Note that $\Delta\cap L'_1\cap\cdots\cap L'_k\in\overline{\mathcal L},$ so that, since $\xi$ is $\alpha$-broad for $\E f,$
$$
|\sum_{\tau\in\tilde \Delta\cap J}\E f_\tau(\xi)|\le\sum _{k=1}^{\bar N}|\mathcal L(\xi)|^k \alpha|\E f(\xi)|
\le \bar NK^{3\epsilon'\bar N}\alpha|\E f(\xi)|.
$$
Since $\alpha\ge K^{-\epsilon}\gg 10 K^{-98},$  in combination with \eqref{ongdelta}, and in the last step with \eqref{11-9}, we conclude that
$$
|\E g_{\Delta}(\xi)| \le (\bar N K^{3\bar N\epsilon'}+1)\alpha |\E f(\xi)|+K^{-98}|\E f(\xi)|+{\rm neglig} \le  K^{4\bar N\epsilon'} \alpha |\E g(\xi)|
$$

This completes the proof of Lemma \ref{lemma3.8}.
\end{proof}
\medskip

The contribution by the bilinear term in \eqref{crucialbroad} will be controlled by means of the following analogue to  \cite[Proposition 4.13 ]{bmvp20b} (or  \cite[Proposition 3.9 ]{Gu16}):
\begin{proposition}\label{prop3.9} We have
$$
\int_{B_j\cap W}{\rm Bil}(\E f_{j,tang})^{3.25}\le C_\epsilon R^{O(\delta)+\epsilon/2}\bigg(\sum_\tau\int
|f_\tau|^2\bigg)^{3/2+\epsilon}.
$$
\end{proposition}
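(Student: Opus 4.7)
The plan is to adapt Proposition 3.9 of Guth \cite{Gu16} and Proposition 4.13 of \cite{bmvp20b} to our general hyperbolic setting. The key input is the bilinear restriction theorem of Lee \cite{lee05} and Vargas \cite{v05} for the hyperbolic paraboloid, which, as inspection of its proof reveals, extends to any phase $\phi\in\Hyp^{M}$ with constants depending only on uniform upper bounds for the first and second derivatives of $\phi$ and on a lower bound for the modulus of $\Gamma_{z}(z_1,z_2,z_1',z_2')$. For any two strongly separated caps $\tau_1,\tau_2$ of side length $\mu^{1/2}K^{-1}$, Remark \ref{Gammasize} supplies the required transversality $|\Gamma|\ge 4\mu K^{-2}$ uniformly on $\tau_1\times \tau_2$. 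Rescaling the Lee--Vargas estimate to the scale $(\mu^{1/2}K^{-1},\mu K^{-2})$ of our caps and their transversality then yields
\begin{equation*}
\|\E f_{\tau_1}\,\E f_{\tau_2}\|_{L^{p_0/2}(B_R)}
\le C_\epsilon\, R^{\epsilon/10}\,(\mu K^{-2})^{-a}\,\|f_{\tau_1}\|_2\|f_{\tau_2}\|_2
\end{equation*}
for some $p_0$ slightly below $3.25$ and some $a=a(p_0)>0$, uniformly in $\phi\in\Hyp^{M}$.

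The second step is to use the tangential hypothesis to interpolate this bound up to the exponent $3.25/2$. Because every tube in $\T_{j,\mathrm{tang}}$ lies in an $R^{1/2+\delta}$-neighborhood $W$ of the two-dimensional variety $Z(P)\cap B_j$, and because by Lemma \ref{lemma3.6} at most $R^{1/2+O(\delta)}$ distinct $\theta$-directions contribute to $\T_{j,\mathrm{tang}}$, a straightforward Cauchy--Schwarz and volume count inside $B_j\cap W$ gives a trivial bound
\begin{equation*}
\|\E f_{\tau,j,\mathrm{tang}}\|_{L^\infty(B_j\cap W)}\le R^{1/4+O(\delta)}\Bigl(\sum_{T\in\T_{j,\mathrm{tang}}\cap\T(\theta)}\|f_{\tau,T}\|_2^2\Bigr)^{1/2},
\end{equation*}
which, combined via real interpolation with the bilinear $L^{p_0/2}$ estimate above, yields the desired bilinear $L^{3.25/2}$ estimate on $B_j\cap W$ with a polynomial loss $R^{O(\delta)}$.

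Summing over pairs of strongly separated caps and using the essential $L^2$-orthogonality of the wave packets from Proposition \ref{packets}(d),(e), together with the multiplicity control $\sum_\tau\|f_\tau\|_2^2\lesssim \mu\sum_\tau\int|f_\tau|^2$ (which is harmless since $\mu\le K^{\epsilon/2}$ by Remark \ref{rem4.1}), one obtains
\begin{equation*}
\int_{B_j\cap W}\mathrm{Bil}(\E f_{j,\mathrm{tang}})^{3.25}
\le C_\epsilon R^{O(\delta)}(\mu K^{-2})^{-2a}\Bigl(\sum_\tau\int|f_\tau|^2\Bigr)^{3.25/2}.
\end{equation*}
Since $\delta=\epsilon^2\ll \epsilon$, the factors $R^{O(\delta)}$ and $(\mu K^{-2})^{-2a}$ are all absorbed into $R^{\epsilon/2}$, and the remaining $\epsilon$-loss in the exponent of $\sum_\tau\int|f_\tau|^2$ is produced in the usual way by pigeonholing the local $L^2$ averages using hypothesis \eqref{average}.

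The main obstacle is verifying that the Lee--Vargas bilinear theorem holds uniformly over $\Hyp^{M}$ with the correct dependence on the transversality parameter and on upper bounds of $\phi$: this requires revisiting their proof and noting that it uses only $C^2$-control and the strong transversality bound on $\Gamma$, both of which are built into our definition of strong separation. All remaining steps then follow the blueprint of \cite[Section 4]{bmvp20b}.
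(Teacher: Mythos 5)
Your proposed route runs through the Lee--Vargas bilinear restriction theorem, and this is where the argument breaks down. Lee's and Vargas's bilinear theorem for the (hyperbolic) paraboloid establishes $\|\E f_{\tau_1}\,\E f_{\tau_2}\|_{L^{p_0/2}}\lesssim\|f_{\tau_1}\|_2\|f_{\tau_2}\|_2$ only in the range $p_0>10/3$; this critical exponent $10/3\approx 3.333$ is strictly larger than $3.25$, and it is scale-invariant -- rescaling to caps of size $\mu^{1/2}K^{-1}$ with transversality $\mu K^{-2}$ changes the constant but not the admissible range of $p_0$. So the bilinear estimate you write down, at ``some $p_0$ slightly below $3.25$,'' simply does not exist. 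The subsequent interpolation step would be fine \emph{if} such an estimate were available, but since the bilinear input only lives at exponents $p_0>10/3>3.25$, interpolating against an $L^\infty$ bound can only move you to exponents \emph{above} $10/3$, never down to $13/4$. In fact, getting below the bilinear threshold $10/3$ is precisely what the polynomial partitioning machinery in the rest of the paper is designed to achieve; one cannot recover $3.25$ from the bilinear theorem plus softness.

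The paper's actual argument is of a different and much cheaper nature. Proposition~\ref{prop3.9} is reduced, as in Guth \cite[Prop.~3.9]{Gu16} and \cite[Prop.~4.13]{bmvp20b}, to Lemma~\ref{lemma4.11}: a bilinear $L^2\times L^2\to L^2$ (i.e., $L^4$-type C\'ordoba orthogonality) estimate localised to cubes $Q$ of side $R^{1/2}$. This uses strong separation only through Remark~\ref{Gammasize}, which supplies the geometric transversality $|\Gamma_z|\gtrsim \mu K^{-2}$ uniformly on $\tau_1\times\tau_2$; no deep bilinear restriction theorem is invoked. The passage from the local $L^4$ bound to the global $L^{13/4}$ estimate is then a careful counting argument exploiting the tangential structure -- Lemma~\ref{lemma3.6} controls the number ($\lesssim R^{1/2+O(\delta)}$) of contributing directions $\theta$, and the wave-packet orthogonality of Proposition~\ref{packets}(d),(e) handles the summation over cubes and tubes. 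The specific exponent $13/4$ falls out of this bookkeeping, not from an interpolation of endpoint bilinear estimates. To fix your proof you would need to abandon the bilinear-theorem input entirely and replace it with the $R^{1/2}$-cube orthogonality estimate of Lemma~\ref{lemma4.11}, then reproduce the Guth-style counting.
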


With Proposition  \ref{prop3.9} at hand, the rest  of the proof of Theorem \ref{largetheorem}, which we shall detail in Subsection \ref{completing},  will follow the arguments in Section 4.2 in \cite{bmvp20b} (which in return are an adaptation of the arguments in pages 396-398 of \cite{Gu16}).
\smallskip

The proof of Proposition  \ref{prop3.9} reduces to the following analogue to  \cite[Lemma 4.14]{bmvp20b} and  \cite[Lemma 3.10]{Gu16}. Suppose we have covered $B_j\cap W$ with a minimal number of cubes $Q$  of side length $R^{1/2},$   and denote by $\T_{j,tang,Q}$ the set of all tubes $T$ in $\T_{j,tang}$ such that $10T$ intersects $Q.$
\begin{lemma}\label{lemma4.11} Fix $j,$ i.e., a ball $B_j.$
If $\tau_1,\tau_2$ are strongly separated  caps, then for any of the cubes $Q$  we have
\begin{eqnarray*}
&&\int_Q|\E f_{\tau_1,j,tang}|^2|\E f_{\tau_2,j,tang}|^2\\
&&\phantom\qquad\le
R^{O(\delta)}R^{-1/2}
\big(\sum_{T_1\in\T_{j,tang,Q}}\|f_{\tau_1,T_1}\|_2^2\big)
\big(\sum_{T_2\in\T_{j,tang,Q}}
\|f_{\tau_2,T_2}\|_2^2\big)+{\rm neglig}.
\end{eqnarray*}
\end{lemma}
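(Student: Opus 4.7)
The plan is to mirror the proof of Lemma 4.14 in \cite{bmvp20b}, which in turn is an adaptation of Lemma 3.10 in \cite{Gu16}, and to verify that it carries over to our more general hyperbolic phases by invoking the hyperbolic factorization of $\Gamma_z$ together with Remark \ref{Gammasize}.

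First, I would apply Proposition \ref{packets} to expand
$$\E f_{\tau_i,j,tang}=\sum_{T_i\in \T_{j,tang},\ T_i\in\T(\theta),\ \theta\subset\tau_i}\E f_{\tau_i,T_i}+O(R^{-1000})\|f_{\tau_i}\|_2.$$
By property (b) of that proposition, $\E f_{\tau_i,T_i}$ is negligible outside $T_i$, so on $Q$ only tubes meeting $10Q$ contribute non-negligibly. Setting
$$g_i:=\sum_{T_i\in \T_{j,tang,Q},\ T_i\in\T(\theta),\ \theta\subset\tau_i}f_{\tau_i,T_i},$$
and using the essential orthogonality of wave packets (Proposition \ref{packets}(d),(e)) to bound $\|g_i\|_2^2\lesssim \sum_{T_i}\|f_{\tau_i,T_i}\|_2^2+\mathrm{neglig}$, the lemma reduces, modulo the $\mathrm{neglig}$ term, to the bilinear $L^2$ inequality
$$\int_Q|\E g_1|^2|\E g_2|^2\le R^{O(\delta)}R^{-1/2}\|g_1\|_2^2\|g_2\|_2^2.$$

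The bilinear estimate above is where the transversality coming from strong separation enters decisively. By Plancherel, the Fourier transform of $\E g_1\cdot\overline{\E g_2}$ is the convolution of the two surface measures carried on the $\tau_i$-portions of $S_\phi$, and the bound reduces to controlling the sup-norm of the density of this convolution measure. That density is, up to harmless factors, the inverse of the Jacobian of the map $(z_1,z_2)\mapsto(\nabla\phi(z_1)-\nabla\phi(z_2),\phi(z_1)-\phi(z_2))$, and this Jacobian equals (up to non-vanishing factors) exactly $\Gamma_z(z_1,z_2)$ from \eqref{trans}. The additional tangency constraint $T_i\in\T_{j,tang}$ confines the tube directions $\nu(\theta_i)$ to an $R^{-1/2+2\delta}$-neighbourhood of $T_\zeta Z(P)$ for a reference nonsingular point $\zeta\in 2B_j$, which is what localizes one of the integrations to a slab of width $R^{1/2}$ and contributes the decisive $R^{-1/2}$ factor. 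Strong separation of $\tau_1,\tau_2$ together with the hyperbolic factorization \eqref{factori} and Remark \ref{Gammasize} supplies the uniform lower bound $|\Gamma_z|\gtrsim \mu K^{-2}$ throughout $\tau_1\times\tau_2$, which is exactly what the argument in \cite[Lemma 4.14]{bmvp20b} needs.

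The hard part will be checking that the transversality $|\Gamma_z|\gtrsim \mu K^{-2}$ holds uniformly across the whole $\tau_1\times\tau_2$ and not only at the centers $z^c_1,z^c_2$. This is precisely where the hyperbolic factorization $\Gamma_z=\tfrac{2}{q(z)}\,t^1_z(z_1,z_2)\,t^2_z(z_1,z_2)$ derived in Subsection \ref{Gamma} is crucial: each factor $t^i_z$ is a smooth function of $(z,z_1,z_2)$, and the regularity bounds \eqref{regularity}, \eqref{phiest} show that each $t^i_z$ varies by at most $O(\mu^{1/2}K^{-1})$ as $(z_1,z_2)$ ranges over $\tau_1\times\tau_2$. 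Thus the pointwise lower bound $|t^i_{z^c_\ell}(z^c_1,z^c_2)|\ge 50\mu^{1/2}K^{-1}$ built into the definition of strong separation persists, up to a harmless factor, on all of $\tau_1\times\tau_2$, yielding $|\Gamma_z|\gtrsim \mu K^{-2}$ uniformly. Since only this lower bound on $|\Gamma_z|$ together with the $\Hyp^M$ derivative bounds enters the bilinear argument, the proof scheme of \cite[Lemma 4.14]{bmvp20b} carries over verbatim.
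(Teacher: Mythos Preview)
Your proposal is correct and follows essentially the same approach as the paper: the paper's proof consists of the single sentence ``Using Remark \ref{Gammasize}, the proof of Lemma 4.15 in \cite{bmvp20b} can be repeated word by word, giving the result,'' and you have simply unpacked that reference, identifying the uniform lower bound $|\Gamma_z|\gtrsim \mu K^{-2}$ from Remark \ref{Gammasize} as the sole new input needed to transplant the argument from \cite{bmvp20b}.
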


\begin{proof}
Using Remark \ref{Gammasize}, the proof of Lemma 4.15 in \cite{bmvp20b} can be repeated word by word, giving the result.
\end{proof}

\medskip

\subsection{Completing the  proof of Theorem \ref{largetheorem}}\label{completing}

If we compare with  Subsection 4.2 in \cite{bmvp20b}, which deals with the induction arguments with respect to the size of the radius $R,$ and the size  of 
$\sum_\tau\int |f_\tau|^2,$ we can see that Lemma \ref{lemma3.8}, which was the only result whose proof  required substantial new arguments compared to the corresponding result in  \cite{bmvp20b}, is  needed only for the discussion of  Case 2 in this subsection, i.e., the  case where  the dominating term in 
$$
\int_{B_R} (Br_\alpha\E f)^{3.25}=\sum_i\int_{B_R\cap O_i'} (Br_\alpha\E
f)^{3.25}+\int_{B_R\cap W} (Br_\alpha\E f)^{3.25}
$$
is the second term, the ``wall''  term. The estimation of this term could then be reduced to controlling 
$\sum_{j}\int_{B_j\cap W}\sum_ I (Br_{150\alpha}\E
f_{I,j,trans})^{3.25},$ which in view of our  Lemma \ref{lemma3.8} here has to be modified to
\begin{equation}\label{three}
\sum_{j}\int_{B_j\cap W}\sum_ I (Br_{K^{4\bar N\epsilon'}\alpha}\E
f_{I,j,trans})^{3.25}. 
\end{equation}

Dealing with this term by induction as in that paper, we arrive at
\begin{eqnarray*}
\int_{B_R}(Br_{K^{4\bar N\epsilon'}\alpha}E f)^{3.25}&\le&M_\epsilon C_\epsilon {\rm Poly}(D) R^{\epsilon(1-\delta)}\bigg(\sum_\tau\int |f_\tau|^2\bigg)^{3/2+\epsilon}R^{\delta_{trans}(1-\delta)\log(K^{4\bar N\ve'}K^\ve\alpha\mu)}\\
&\le&\Big(M_\epsilon {\rm Poly}(R^{\delta_{deg}}) R^{\delta_{trans}4\bar N\ve'\log(K)-\epsilon\delta}\Big)\\
&&\hskip 3cm\times C_\epsilon R^{\epsilon}
\bigg(\sum_\tau\int|f_\tau|^2\bigg)^{3/2+\epsilon}
R^{\delta_{trans}\log(K^\epsilon\alpha\mu)}\\
&\le& C_\epsilon R^{\epsilon}
\bigg(\sum_\tau\int|f_\tau|^2\bigg)^{3/2+\epsilon}
R^{\delta_{trans}\log(K^\epsilon\alpha\mu)}.
\end{eqnarray*}
For the last inequality, note that the choices of $\delta,\delta_{deg},\delta_{trans}$  in \eqref{deltas} and $K$ in \eqref{KD},  in combination with \eqref{epsprime}, ensure that for $\ve$ sufficiently small
$$C\delta_{deg}+\delta_{trans}4\bar N\ve'\log(K)-\delta\epsilon
= C\ve^4+\ve^6 4 \bar N\ve'\ve^{-10} - \ve^3 <  -\ve^3/2.
$$
Here, $M_\ve=2^{K^2}.$ Then, by  Remark \ref{rem4.1}, $ M_\ve R^{-\ve^3/2} \ll 1.$ 
This completes the proof of Theorem 6.1.

%%%%%%%%%%%%%%%%%%%%%%%%%%%%%%%%%%%%%%%%%%%%%%%%%%%%%%%%%%%%%%%%%%%%%%%%%%%%%%
%%%%%%%%%%% References
%%%%%%%%%%%%%%%%%%%%%%%%%%%%%%%%%%%%%%%%%%%%%%%%%%%%%%%%%%%%%%%%%%%%%%%%%%%%%%
\bigskip

\thispagestyle{empty}
%\newpage

\renewcommand{\refname}{References}

\end{document}